\documentclass[12pt]{amsart}
\usepackage[numbers, square]{natbib}
\usepackage{amssymb}
\usepackage{amsmath}
\usepackage{amsfonts}
\usepackage{geometry}
\usepackage{bbm}
\usepackage{dsfont}
\usepackage{graphicx}
\usepackage{amsthm}
\usepackage{hyperref}
\usepackage[dvipsnames]{xcolor}
\usepackage{esint}

\providecommand{\U}[1]{\protect\rule{.1in}{.1in}}
\theoremstyle{plain}

\newtheorem{corollary}{Corollary}

\newtheorem{lemma}{Lemma}

\newtheorem{proposition}{Proposition}

\newtheorem{theorem}{Theorem}
\newtheorem{knlemma}{Lemma}

\theoremstyle{remark}
\newtheorem{remark}{Remark}
\numberwithin{equation}{section}

\DeclareMathOperator{\dist}{dist}

\newcommand{\be}{\beta}

\newcommand{\ww}{\widetilde{w}}

\newcommand{\iny}{\infty}

\newcommand{\LP}{\Delta}
\newcommand{\gr}{\nabla}

\newcommand{\norm}[1]{\left\vert\left\vert #1\right\vert\right\vert}

\newcommand{\set}[1]{\left\{#1\right\}}
\newcommand{\brac}[1]{\left[#1\right]}
\newcommand{\pr}[1]{\left( #1 \right) }

\begin{document}

\title[Doubling inequalities and propagation of smallness]
{Doubling inequalities and propagation of smallness for Schrödinger equations with singular potentials}
\author{Eugenia Malinnikova}
\address{
Department of Mathematics\\
Stanford University\\
Stanford, CA 94305, USA \\
Email: eugeniam@stanford.edu}
\author{ Jiuyi Zhu}
\address{
Department of Mathematics\\
Louisiana State University\\
Baton Rouge, LA 70803, USA\\
Email:  zhu@math.lsu.edu }
\subjclass[2020]{35J15, 35J10, 35B60, 35R30}\keywords {Propagation of smallness, Carleman estimates, Doubling inequalities}

\begin{abstract}
We study quantitative unique continuation for solutions
of the Schr\"o\-din\-ger equation with complex-valued singular potentials \(V\in L^t\), \(t>d/2\).  We obtain a
family of scale-invariant \(L^p\to L^q\) Carleman inequalities for the
Laplacian and use them to prove the
doubling inequalities with explicit dependence on the 
 \(L^t\)-norm of the potential. These estimates are combined with multiscale arguments to obtain propagation of smallness from arbitrary measurable sets of positive
measure. As an application of the main results, we prove an upper bound for the BMO norm for the logarithms of Dirichlet-Laplace eigenfunctions in simply connected planar Lipschitz domains.
\end{abstract}

\maketitle

\section{Introduction}
In this paper we study quantitative unique continuation for nontrivial
weak solutions of the Schr\"odinger equation
\begin{equation}
    \Delta u+V(x)u=0
    \label{ePDE}
\end{equation}
in a domain \(\Omega\subset\mathbb R^d\), \(d\ge2\). Both the solution
\(u\) and the potential \(V\) may be complex-valued.

There are two main approaches to unique continuation for second order elliptic
equations. The first approach is based on Carleman inequalities and  was
introduced in the original work by Carleman \cite{Car39}  to study
the two-dimensional Schr\"odinger equations with bounded potentials.
Jerison and Kenig \cite{JK85} obtained  the 
\(L^{2d/(d+2)}\to L^{2d/(d-2)}\) Carleman inequality with radial power weight and proved the strong unique continuation property for the Schr\"odinger equation
in dimensions \(d\ge3\) under the scale-critical assumption
\(V\in L^{d/2}_{\mathrm{loc}}\). 
 Carleman estimates with
linear exponential weights and the uniform Sobolev inequalities were developed by
Kenig, Ruiz, and Sogge \cite{KRS87}. Wolff \cite{W92} used linear
weights to show unique continuation properties of  Schr\"odinger equations with singular first-order terms and introduced the
measure-concentration lemma which allowed to select a weight adapted to the
solution. Koch and Tataru \cite{KT01} proved a version of the
 Carleman inequality and deduced the  strong unique continuation property for elliptic operators with
non-constant principal coefficients and singular lower order terms, combining the power weight approach of Jerison and Kenig with Wolff's lemma on weight adaptation.

The second approach to unique continuation is based on the Almgren frequency functions. 
Garofalo and Lin \cite{GL86,GL87} established almost monotonicity of
the frequency for divergence-form elliptic equations with Lipschitz
 coefficients and used it to obtain doubling inequalities and deduce the strong unique
continuation property for these equations. They also applied the frequency method to
the Schr\"odinger equations with homogeneous inverse-square potentials
\(
    V(x)=f(x/|x|)|x|^{-2},\)
     \(f\in L^\infty(S^{d-1}),
\)
a class of singular potentials not covered by the critical \(L^{d/2}\)-assumption.

Both methods have also been used to obtain quantitative forms of unique continuation,
including three-ball inequalities, doubling estimates, and bounds for
the order of vanishing. Using frequency-function methods, Kukavica
\cite{K98} obtained quantitative uniqueness estimates for
second-order elliptic equations with bounded lower-order terms. For the Schr\"odinger equations on  Riemannian manifolds, Bakri
\cite{B12} and Zhu \cite{Z16} independently obtained quantitative
vanishing estimates with sharp dependence on
\(\|V\|_{W^{1,\infty}}\). For singular lower-order terms, Davey and
Zhu \cite{DZ18,DZ19} developed \(L^p\to L^q\) Carleman
estimates and quantified the maximal vanishing order in terms of the
Lebesgue norms of the coefficients. Davey \cite{D20} subsequently obtained quantitative bounds for the vanishing order of solutions to the Schr\"odinger equation with $V\in L^t$, $t>d/2$, in dimensions $d\ge3$.

More recently, Dehman, Ervedoza, and Thabouti \cite{DET24}
proved global \(L^p\)-Carleman estimates for class of  general regular
 weights with some convexity condition, and obtained quantitative continuation estimates
for equations with singular zero and first order terms. Their method gives a sharper dependence on the norm of the potential for the fixed three domain inequality but does not imply the uniform doubling property.
 Caro, Ervedoza, and Thabouti
\cite{CET24} refined these estimates and combined them with Wolff's
 argument to treat first-order coefficients in the full
subcritical range.

Choulli and Takase \cite{CT25} used these estimates to obtain
fixed-ratio three-ball inequalities with explicit potential
dependence. They also proved a doubling inequality using a
singular radial power weight of Jerison and Kenig. However, the constant in their inequality depends on the small
radius and therefore does not give the uniform all-scale control, which we aim for in the present work.

The problem of quantitative continuation from sets of positive measure
goes back to a question of Landis: whether a three-ball inequality
holds when the smallest ball is replaced by an arbitrary
measurable set of positive measure. Nadirashvili
\cite{N86} obtained the first quantitative result in
this direction. Further 
estimates were proved by Vessella \cite{V20} using a different approach.  Regbaoui \cite{R01} also established  a qualitative unique
continuation result from sets of positive measure for solutions of 
Schr\"odinger type inequalities, $|\Delta u|\le A|u|+B|\nabla u|$.
For elliptic equations with singular lower-order terms, Malinnikova
and Vessella \cite{MV12} obtained the quantitative unique
continuation from  measurable sets of positive measure combining the Carleman inequality of Koch and Tataru with harmonic analysis results on Muckenhoupt weights. However, their result, as well as the previous results of Nadirashvili and Vessella for second order elliptic equations with bounded lower-order terms, gives an estimate weaker than the expected H\"older type estimate. 

The sharp analogue of the three-ball inequality with the
smallest ball replaced by an arbitrary measurable set of positive
measure, conjectured by Landis, was proved by Logunov and Malinnikova in \cite{LM18}. 
Their argument uses the almost monotonicity of the doubling
index, closely related to the monotonicity of the frequency function,
together with a multiscale subdivision procedure. More recently, the second author \cite{Zhu26}, and
Le Balc'h and Martin  \cite{LBM25}  independently extended this result to
equations with bounded lower-order terms.

In this paper we consider quantitative unique continuation for
\eqref{ePDE} with possibly complex-valued \(u\) and \(V\), assuming
that \(V\in L^t_{\mathrm{loc}}\) for some \(t>d/2\). Our main
analytic goal is a uniform doubling estimate: one prescribed
large-scale growth ratio should control the doubling index on every
smaller ball, uniformly in the center. 
For equations with bounded lower-order terms, such control is often
obtained from almost monotonicity of a frequency function. At present, to the best of our knowledge,
the frequency-function approach is less flexible for singular
potentials. Recent result of Davey \cite{D26}, 
assumes \(V\in L^t\) with \(t\ge d\). 

We prove a family of \(L^p\to L^q\) Carleman
inequalities for the Laplacian with the power weight
\(|x|^{-\tau}\). Related \(L^p\to L^q\) Carleman estimates were obtained in
\cite{DZ18,DZ19,D20}. The point of the estimate used here is that the
weight is exactly homogeneous and the dependence on the Carleman
parameter \(\tau\) is tracked throughout the full range of exponents.
 This allows us to  derive three ball inequality and then the following
 doubling inequalities.

\begin{theorem}
\label{th-1}
Let \(d\ge2\), \(t>d/2\). Suppose that \(u\in W^{1,2}_{\mathrm{loc}}(B_R\)  is a  nontrivial
weak solution
 of \eqref{ePDE}
in \(B_R\) with \(V\in L^t(B_R\).
If \(d\ge3\),  there exists  constant \(C=C(d,t)\) such that,
for every \(x\in B_{R/4}\) and every
\(0<r\le R/4\),
\begin{equation}
\|u\|_{L^2(B_{2r}(x))}
\le
\exp\left(C
\left(1+M_V(B_R)\right)^{\mu(d,t)}\right)\left(\frac{\|u\|_{L^2(B_R)}}
     {\|u\|_{L^2(B_{R/2})}}\right)^C
\|u\|_{L^2(B_r(x))},
\label{eq:main-doubling}
\end{equation}
where  $M_V(B_R)=R^{2-d/t}\|V\|_{L^t(B_R)}$ and \(\mu(d,t)\) is given in (\ref{mu3}).

If \(d=2\),  the same conclusion
holds for every \(\varepsilon>0\), with \(C=C(t,\varepsilon)\) and \(\mu(d,t)\) replaced by
\(
\mu_\varepsilon(2,t)\) defined in \eqref{mu2}.
\end{theorem}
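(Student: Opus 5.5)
By scaling $x\mapsto Rx$ we may assume $R=1$. The quantity $M_V(B_R)=R^{2-d/t}\|V\|_{L^t(B_R)}$ is exactly scale-invariant, so after scaling $M:=M_V(B_1)=\|V\|_{L^t(B_1)}$. The plan is to deduce the doubling inequality \eqref{eq:main-doubling} from a three-ball inequality, iterated along chains of balls. The three-ball inequality will itself be derived from the $L^p\to L^q$ Carleman estimate with the homogeneous weight $|x|^{-\tau}$.

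\textbf{Step 1: Carleman estimate and absorption.} Choose exponents $p<q$ with $1/p-1/q=1/t$ (so that H\"older gives $\||x|^{-\tau}Vf\|_{L^p}\le \|V\|_{L^t}\||x|^{-\tau}f\|_{L^q}$), adjusted to the admissible Carleman range. Apply the Carleman inequality
\[
\||x|^{-\tau}f\|_{L^q}\le C\tau^{-\beta}\,\||x|^{-\tau}|x|^{2-d(1/p-1/q)}\Delta f\|_{L^p}
\]
to $f=\eta u$, where $\eta$ is a cutoff in an annulus $\{r/2<|x-x_0|<2\}$; the precise power depends on the $\tau$-gain $\beta=\beta(d,t)>0$ tracked in the Carleman estimate. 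The potential term can be absorbed once $C\tau^{-\beta}M<1/2$, i.e. for $\tau\ge \tau_0:=C M^{1/\beta}$. The weight factor $|x|^{2-d/t}$ is exactly what makes $M$ scale-invariant, and this is the point where exact homogeneity of the weight is used. The commutator terms $2\nabla\eta\cdot\nabla u+u\Delta\eta$ are supported on the inner and outer cutoff shells. There we control $\nabla u$ by $u$ through a Caccioppoli inequality for $\Delta u+Vu=0$ with $V\in L^t$, $t>d/2$, which holds via Sobolev embedding and absorption at scales where the local $M_V$ is small. We then pass from $L^q$ back to $L^2$ using local boundedness (Moser iteration, which also costs a polynomial factor in $M$). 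Optimizing in $\tau\ge\tau_0$ yields a three-ball inequality, valid for all $r$ and $x_0$,
\[
\|u\|_{L^2(B_{r}(x_0))}\le e^{C(1+M)^{\gamma}}\|u\|_{L^2(B_{r/2}(x_0))}^{\alpha}\|u\|_{L^2(B_{2r}(x_0))}^{1-\alpha},
\]
or more usefully, the estimate in the ``$\tau$-form''
\[
\frac{\|u\|_{B_{\rho}}}{\|u\|_{B_{r}}}\le C\Big(\frac{\rho}{r}\Big)^{\tau}
\]
for a suitable admissible $\tau$ related to the large-scale ratio.

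\textbf{Step 2: all-scale doubling, the main obstacle.} Here we follow the standard Jerison--Kenig/Garofalo--Lin style argument. Fix $x_0\in B_{1/4}$ and let $N=\log\big(\|u\|_{B_1}/\|u\|_{B_{1/2}}\big)$. Using the Carleman inequality with $\tau\approx C(\tau_0+N)$, on the annulus $B_{1/2}(x_0)\setminus B_{r}(x_0)$ the weighted norm at the large radius dominates, since $B_{1/2}\subset B_{3/4}(x_0)$ and a chain of boundedly many three-ball steps transfers the ratio $N$ to balls centered at $x_0$. One then obtains
\[
\|u\|_{B_{2r}(x_0)}\le C\,2^{C\tau}\|u\|_{B_r(x_0)}\quad\text{for all } r,
\]
and $e^{C\tau}=e^{C(1+M)^{\mu}}\,(\|u\|_{B_1}/\|u\|_{B_{1/2}})^C$, which is exactly \eqref{eq:main-doubling}. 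I expect the main difficulty to be showing that a single $\tau$ works uniformly at every small scale, with the outer-shell error controlled independently of $r$. This is where the single choice of $\tau$ and the homogeneous weight are crucial: the absorption threshold $\tau_0$ does not depend on $r$. One must also keep careful track of the exponent $\mu(d,t)=\gamma$ coming from $\beta$ and the Caccioppoli/Moser constants.

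\textbf{Step 3: the case $d=2$.} The endpoint Sobolev and Carleman range degenerates in two dimensions, since $p=1$ is not allowed. We would therefore use a Carleman estimate with a slightly worse gain $\tau^{-\beta+\varepsilon}$, which produces $\mu_\varepsilon(2,t)$ and constants depending on $\varepsilon$. The rest of the argument is unchanged.
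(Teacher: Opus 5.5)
Your architecture matches the paper's. You use a single non-resonant $\tau\approx C(A+N)$ in the power-weight Carleman estimate and absorb the potential once $\tau^{\beta}\gtrsim M$. You apply Caccioppoli on the two cutoff shells. You absorb the outer error using a lower bound $e^{-C(A+N)}\|u\|_{L^2(B_{R/2})}$ on a fixed-size annulus centred at $x_0$, obtained from chained three-ball inequalities (Lemma~\ref{lem:change-center}). With $p=\frac{2d}{d+2}$ and $\frac1p-\frac1q=\frac1t$ this gives exactly $\mu=\frac{2dt}{(2t-d)(d+2)}$ for $d/2<t\le d$. With $p_\varepsilon\downarrow 1$ as in your Step~3, it gives $\mu_\varepsilon(2,t)$ for $1<t\le 2$.

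\textbf{The gap: the range $t>d$.} There the statement uses $\mu(d,t)=\min\{\frac{2d}{d+2},\frac{2t}{3t-2d}\}$, which equals $\frac23$ at $t=\infty$, and a homogeneous weight cannot produce this. In Theorem~\ref{Carlpq} the gain satisfies $\beta_d(p,q)\le 1$ on the whole admissible range. For $d\ge3$, $\beta_d$ is affine in $1/p$ and in $1/q$ separately, with vertex values $1,\frac12,\frac{d+2}{2d},0$; for $d=2$, $\beta_2=1-\frac1p(1-\frac2q)\le 1$. This is structural: in logarithmic coordinates $|x|^{-\tau}=e^{-\tau\ell}$ is linear, $L^+_\tau$ gains $\tau$, and $L^-_\tau$ gains nothing at non-resonant $\tau$. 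Your absorption threshold is therefore $\tau\gtrsim M^{1/\beta}$ with $1/\beta\ge1$, whereas $\frac{2t}{3t-2d}<1$ for $t>2d$. Even the best power-weight choice for $t>d$, namely $p=\frac{2t}{t+2}$, $q=2$, gives the exponent $\frac{2t}{2t+2-d}$, which is worse than $\frac{2t}{3t-2d}$ as soon as $t>d+2$. So for large $t$ your argument proves the doubling inequality only with a strictly larger exponent.

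\textbf{What is missing.} You need the $L^2$ Carleman estimate \eqref{car-est} from \cite{Z15}, with the convexified weight $e^{\tau\varphi}=(\Lambda R/|x|)^{\tau}|\ell_R|^{-2\tau}$, which gains $\tau^{3/2}$. Combine it, as in Proposition~\ref{lem:large-t-carleman}, with the splitting $V=V\mathbf 1_{\{|V|>\lambda\}}+V\mathbf 1_{\{|V|\le\lambda\}}$ at height $\lambda=cR^{-2}\tau^{3/2}$.
\begin{itemize}
\item The bounded part is absorbed by the $\tau^{3/2}$ term, using $|x|^2|\ell_R|\le CR^2$.
\item The unbounded part has $L^d$-norm at most $\lambda^{1-t/d}\|V\|_{L^t}^{t/d}$. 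Handle it by H\"older and Sobolev applied to $|x|^{2-d/2}e^{\tau\varphi}v$, and absorb it by the first two terms of \eqref{car-est}. For $d=2$, use $L^{a_\varepsilon}$ with $a_\varepsilon\downarrow 2$ instead of $L^d$.
\item Together these give the threshold $\tau\ge C(1+M)^{2t/(3t-2d)}$.
\end{itemize}

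\textbf{Consequences for your Step~2.} Since this weight is not homogeneous, Step~2 has to be redone for it.
\begin{itemize}
\item The inner annulus $A_{r,2r}$ is controlled by the extra term $\tau^{1/2}\rho^{1/2}\|e^{\tau\varphi}|x|^{-1/2}v\|$ with $\rho=r/2$, which avoids the $|\ell_R|^{-1}$ loss.
\item The outer term is absorbed using $\varphi(2R_0/3)-\varphi(R_0)=c_0>0$, which is independent of the scale.
\item Uniformity in $r$ comes from $\varphi(r/2)-\varphi(2r)\le C$, since $|\varphi'(s)|\le C/s$.
\end{itemize}
So exact homogeneity is not the essential point. What is needed is a dilation-compatible weight with uniformly bounded oscillation across dyadic shells.

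\textbf{Smaller points.}
\begin{itemize}
\item $\tau$ must be non-resonant, for example chosen in $[C(A+N),C(A+N)+1]$.
\item Caccioppoli holds at every scale with factor $C(1+M^{2t/(2t-d)})\le Ce^{CA}$, so there is no need to restrict to scales where the local $M_V$ is small.
\item No Moser iteration is needed to return from $L^q$ to $L^2$. It suffices to add the $(p,2)$ estimate, as in Proposition~\ref{CarlpqV}.
\end{itemize}
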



This result allows us to adapt the multiscale argument of Logunov and Malinnikova \cite{LM18}, adding to the doubling index a scale-dependent potential term which decreases under subdivision, and obtain the following propagation of smallness estimate.
 \begin{theorem}
	   \label{th-2}
		Let $u$ be a solution of (\ref{ePDE}) in $B_R$ with $V\in L^t(B_R),$ $t>d/2$.
		Assume that
	$$|u|\le \delta\ {\text{ on}}\  E\subset B_{R/2}, \text{and} \  \ |u|\leq 1  {\text{ in}}\  B_R,    $$
		where $|E|>0$. Let  $K$ be a compact subset of $B_R$, then
		\begin{equation}
			\max_K|u|\le e^{C_0\left(1+M_V(B_R)\right)^{\mu(d,t)}
} \delta^{\alpha},
            \label{weak-st}
		\end{equation}
		where $C_0>0$ and $\alpha\in(0,1)$ depend on $R^{-d}|E|$ and $R^{-1}\dist(K,\partial B_R)$, and $\mu(d,t)$ is as in Theorem \ref{th-1}.
	\end{theorem}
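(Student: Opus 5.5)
We will follow the multiscale scheme of \cite{LM18}, as adapted in \cite{Zhu26,LBM25}, and replace the almost monotonicity of the doubling index with the uniform doubling inequality of Theorem~\ref{th-1}.

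\emph{Normalization and modified doubling index.} By scaling \(x\mapsto Rx\) we may assume \(R=1\); then \(M_V\) is invariant. For a cube \(Q\subset B_{1/2}\) with side \(s\), we define
\[
N(Q)=\log\frac{\sup_{2Q}|u|}{\sup_Q|u|}+\Lambda\, s^{2-d/t}\|V\|_{L^t(2Q)}^{\gamma},
\]
where \(\gamma>0\) is chosen so that the potential term matches the form \((1+M_V)^{\mu}\) appearing in Theorem~\ref{th-1}. Since \(2-d/t>0\), this term decreases geometrically under subdivision. Theorem~\ref{th-1}, together with local boundedness (Moser iteration, valid for \(V\in L^t\), \(t>d/2\)) to pass from \(L^2\) to \(L^\infty\), gives a quasi-monotonicity \(N(q)\le C N(Q)+C\) for subcubes \(q\subset Q\). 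It also gives a uniform a priori bound
\[
N(Q)\le N_0:=C(1+M_V)^{\mu}+C\log\frac{1}{\sup_{B_{1/2}}|u|}
\]
for all \(Q\subset B_{1/2}\).

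\emph{Propagation from small balls and the measurable-set step.} Next we prove the analogue of the LM18 lemma: if \(|u|\le\varepsilon\) on a set of measure \(\ge c|Q|\) in a cube \(Q\) with \(N(Q)\le N\), then \(\sup_Q|u|\le C^{N}\varepsilon^{\beta}\)-type bounds hold. We first handle the case where \(N\) is bounded, using the Remez-type inequality that follows from doubling (a Hölder bound for \(u\) via De Giorgi/Moser on scales where the potential term is small). We then run the dyadic subdivision. The key combinatorial statement from \cite{LM18} says that, after dividing \(Q\) into \(K^d\) subcubes, most subcubes have doubling index at most \(N/2\) whenever \(N\) is large. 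Here this is needed only for the harmonic-like part; the potential part of \(N\) already halves automatically. Iterating on the subcubes that carry a large portion of \(E\) reduces the index to a bounded size after about \(\log N\) steps, at which point the bounded-index case applies. Collecting the losses gives \(\sup_{Q}|u|\le e^{CN}\delta^{\alpha}\).

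\emph{Conclusion.} Cover \(E\subset B_{1/2}\) by cubes and apply the previous step to a cube containing a proportional piece of \(E\), obtaining smallness on some ball \(B_\rho(x_0)\) with \(\rho\) depending only on \(|E|\). The three-ball inequality, derived from the Carleman estimate behind Theorem~\ref{th-1} with constants \(e^{C(1+M_V)^{\mu}}\), then propagates smallness along chains of balls to \(K\), using \(|u|\le1\). If \(\sup_{B_{1/2}}|u|\) is tiny, the desired estimate is already trivial; otherwise \(N_0\le C(1+M_V)^\mu+C\log(1/\sup_K|u|)\), and absorbing the logarithmic term yields \eqref{weak-st}.

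\emph{Main obstacle.} The hardest part will be the combinatorial halving lemma of \cite{LM18} for the harmonic part of \(N\). Its proof relies on approximating the solution by one with simple zeros and on almost monotonicity, and for singular \(V\) only the weaker quasi-monotonicity from Theorem~\ref{th-1} is available. Our plan is to follow \cite{Zhu26}: at scales \(s\) where \(s^{2-d/t}\|V\|\) is small, compare \(u\) with a harmonic function via a perturbation estimate in \(W^{2,t}\), and invoke the harmonic version of the lemma.
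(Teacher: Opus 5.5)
There is a genuine gap, and it sits at the step you flag as the main obstacle. The statement you attribute to \cite{LM18}, that \emph{most} of the $K^d$ subcubes have doubling index at most $N/2$, is a much deeper Logunov-type counting lemma (hyperplane/simplex lemmas, resting on sharp almost monotonicity). The positive-measure argument of \cite{LM18} does not use it. You do not prove it for singular $V$, and your plan to get it from a harmonic approximant cannot work. A $W^{2,t}$ perturbation on a cube of side $s$ gives only $\sup|u-h|\lesssim s^{2-d/t}\|V\|_{L^t}\sup|u|$, a polynomially small error. Transferring doubling indices of size $N$ needs an error $\ll e^{-CN}\sup_Q|u|$, because $\sup_q|u|$ on subcubes can be that small.

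Two further steps also fail as written. Iterating ``on the subcubes that carry a large portion of $E$'' does not close: the relative density of $E$ is not preserved when you pass to a good subcube, and the losses accumulate over $\sim\log N$ generations. And with only $N(q)\le CN(Q)+C$, $C>1$, your class of cubes with index $\le N$ is not stable under subdivision, so no induction on $N$ can be run on it.

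The positive-measure case needs much less, and this is what the paper does. Work with the genuinely monotone quantity $\mathcal M_u(Q)=\mathcal N_u(2Q)+\mathcal A_V(\widetilde\Lambda_dQ)$, where $\mathcal N_u$ is the supremum of $N_u$ over subcubes. Theorem~\ref{th-1} gives $\mathcal N_u(Q)\le C_1N_u(\eta^{-1}Q)+A_1\mathcal A_V$.

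A chain argument then produces \emph{one} good subcube. Let $N_{\min}$ be the smallest doubling index among the enlarged subcubes $p=2\eta^{-1}q$. Starting from a maximum point of $|u|$ on $\frac12Q$, you can take $\sim K$ steps of length $\sim K^{-1}s(Q)$, gaining a factor $e^{N_{\min}}$ each time. This forces $N_{\min}\le C\mathcal N_u(Q)/K$. Hence, for $K$ large, some $q_0$ has $\mathcal M_u(q_0)\le\mathcal M_u(Q)/2$. The constant $C_1>1$ is harmless because of the gain $1/K$, and the potential part drops by the factor $K^{-\mu(2-d/t)}$.

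Together with $\sup_q|u|\ge e^{-a_0\mathcal M}\sup_Q|u|$ for every subcube, this gives a recursion for the normalized measure $F$ of the sublevel sets $\{|u|<e^{-a}\sup_Q|u|\}$:
\[
F(\mathcal M,a)\le K^{-d}F(\mathcal M/2,a-a_0\mathcal M)+(1-K^{-d})F(\mathcal M,a-a_0\mathcal M).
\]
This yields the Remez inequality $|E_a|\le Ce^{-\gamma a/\mathcal M_u(Q)}|Q|$. Handling all thresholds $a$ at once is what removes any need to track the density of $E$. Your bounded-index base case (H\"older oscillation bound) and your final step (absorbing $\log(1/\sup|u|)$ and chaining three-ball inequalities to $K$) are fine and match the paper.
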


The  paper is organized as follows.
In Section \ref{CarlEst}, we formulate the $L^p\to L^q$ Carleman inequality in Theorem \ref{Carlpq}. 
The proof of $L^p\to L^q$ Carleman inequality is postponed to Section \ref{s:Carleman}.
In Section \ref{Double}, we deduce the  three-ball  and doubling inequality of Theorem \ref{th-1}.
Theorem \ref{th-2} is obtained as a corollary of a more general Remez-type inequality in Section \ref{Remez}. 
We  give an application of our results to estimates of the BMO-norms of the logarithms of the Dirichlet-Laplace eigenfunctions in planar simply connected Lipschitz domains at the end of Section \ref{Remez}. 
Appendix \ref{s:local-estimates} includes basic regularity theory and oscillation estimates for solutions of the Schr\"odinger equations with complex-valued potentials following \cite{BK79}.

{ \bf Acknowledgment:} The project was started when the second author was visiting Stanford University in 2024. The second author thanks the Department of Mathematics at Stanford University for the warm hospitality and the wonderful academic atmosphere. The first author is supported in part by NSF grant DMS-2247185 and the second author is supported in part by  NSF grant  DMS-2453348 and Simons Foundation Collaboration Grant.

\section{Carleman estimates}
\label{CarlEst}
\subsection{Power weight Carleman estimate for the Laplace operator}
 Let us first recall the regularity of weak
solutions to \eqref{ePDE}. Assume that
\(
    u\in W^{1,2}_{\mathrm{loc}}(B_R)
\)
is a weak solution of the Schr\"odinger equation \eqref{ePDE}
in \(B_R\), where the complex valued \(V\in L^t(B_R)\), \(t>d/2\). 

By the standard Sobolev embedding arguments, see
Appendix~\ref{s:local-estimates} for details, there exist \(s>d/2\) and
\(\alpha>0\) such that
\[
u\in W^{2,s}_{\mathrm{loc}}(B_R)
\cap C^{0,\alpha}_{\mathrm{loc}}(B_R).
\]

In this section, we state the quantitative $L^p-L^{q}$ Carleman estimates for the Laplacian which will be used below. 
We use the notation
\[
\|f\|_{L^p(|x|^{-d}dx)}
    =
    \left(\int_{\mathbb R^d}|f(x)|^p |x|^{-d}\,dx\right)^{1/p}.
\]
The measure \(|x|^{-d}dx\) is invariant under dilation with respect to the origin, and for this reason the estimate below is well suited to scale-invariant quantitative
unique-continuation estimates.

\begin{theorem}
Let $d\ge3$ and $\frac{2d}{d+2} \leq p \le 2 \leq q \leq \frac{2d}{d-2}$. There
exists a constant $C$, depending on $d$, $p$, and $q$,  such that for any
$v\in C^{\infty}_{0}\pr{\mathbb R^d\setminus\set{0} }$ and every 
\(\tau>1\) satisfying
\(
    \operatorname{dist}(\tau,\mathbb N)>\frac13,
\) one has
\begin{align}
\tau^{\be} \| |x|^{-\tau} v\|_{L^q(|x|^{-d}dx)} 
\leq  C \| |x|^{-\tau+2}  \LP v\|_{L^p(|x|^{-d} dx)} ,
\label{mainCar}
\end{align}
where $\be = \be_d(p,q)=\frac{2+d}{4}+\frac{2-d}{2p}+\frac{d}{p}(\frac{1}{q}-\frac{1}{2})$.
 \label{Carlpq}

 For \(d=2\), assume
\(
    1<p\le2\le q\le \infty .
\) Then the above inequality holds with \(
    \beta=\beta_2(p,q)
    =
    1-\frac1p+\frac{2}{pq}.
\)
\end{theorem}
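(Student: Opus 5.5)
Pass to logarithmic polar coordinates \(x=e^{s}\omega\), \(s\in\mathbb R\), \(\omega\in S^{d-1}\). In these coordinates \(|x|^{-d}dx=ds\,d\omega\), and the weighted norms become plain \(L^p(\mathbb R\times S^{d-1})\) norms. Set \(w(s,\omega)=e^{-\tau s}v(e^s\omega)\). Then
\[
e^{-\tau s}e^{2s}\Delta v
= \bigl(\partial_s+\tau\bigr)^2 w+(d-2)\bigl(\partial_s+\tau\bigr)w+\Delta_\omega w .
\]
Expand \(w\) in spherical harmonics, \(w=\sum_k w_k\) with \(-\Delta_\omega w_k=k(k+d-2)w_k\). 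The operator factors on each mode as
\[
L_k=\bigl(\partial_s+\tau+k+d-2\bigr)\bigl(\partial_s+\tau-k\bigr).
\]
So \eqref{mainCar} reduces to \(\tau^\beta\|w\|_{L^q}\le C\|Lw\|_{L^p}\) on the cylinder. The inverse is a Fourier multiplier in \(s\) with symbol \(\bigl((i\xi+\tau+k+d-2)(i\xi+\tau-k)\bigr)^{-1}\) on the \(k\)-th mode. This inverse is well defined because \(\tau\notin\mathbb N\). The condition \(\operatorname{dist}(\tau,\mathbb N)>1/3\) gives \(|\tau-k|\ge 1/3\) uniformly.

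The kernel of \(L^{-1}\) in \(s\) is explicit. Since \(\tau+k+d-2>0\), for each \(k\) it is a combination of the exponentials \(e^{-(\tau+k+d-2)s}\) and \(e^{-(\tau-k)s}\), supported on half-lines whose direction is fixed by the sign of \(\tau-k\). Its \(k\)-dependent factor is \((2k+d-2)^{-1}\). Summing over modes, the full kernel \(G(s-s',\omega,\omega')\) is written with the zonal harmonics \(Z_k(\omega\cdot\omega')\). I split it into the modes with \(|k-\tau|\lesssim\) (a suitable window) and the rest.

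\emph{High modes.} For \(|k-\tau|\) large, or \(k\gg\tau\), the kernel behaves like the Euclidean fundamental solution of \(\Delta\) conjugated by the weight. This part should be bounded by the Jerison--Kenig/Stein--Weiss fractional integration. At the endpoint \(p=2d/(d+2)\), \(q=2d/(d-2)\) this gives the \(\tau\)-independent estimate, consistent with \(\beta=0\) there. Alternatively I can quote the Jerison--Kenig estimate (uniform in \(\tau\) away from integers) for the endpoint.

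\emph{The \(L^2\to L^2\) case.} Here the estimate follows from Plancherel in \(s\) and orthogonality in \(k\). One has
\[
|(i\xi+\tau+k+d-2)(i\xi+\tau-k)|\gtrsim \tau\cdot\tfrac13
\]
uniformly for \(\tau>1\), giving \(\beta(2,2)=1\); check: \(\frac{2+d}{4}+\frac{2-d}{4}=1\). \(\checkmark\)

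\emph{Intermediate \((p,q)\).} I interpolate between the endpoint pairs. Since \(\beta\) is affine in \(1/p\) for fixed \(1/q\), and in \(1/q\) for fixed \(1/p\) (it contains a bilinear term \(\frac{d}{pq}\)), I cannot interpolate naively. Instead I prove the estimate at the four corners \((p,q)\in\{2d/(d+2),2\}\times\{2,2d/(d-2)\}\) and use Stein complex interpolation with an analytic family in two steps. First fix \(p\) and vary \(q\), then vary \(p\). This is compatible with bilinearity. The mixed corners \((2,2d/(d-2))\) and its dual are handled by composing the \(L^2\) bound with a Sobolev-type gain on the cylinder restricted to the near-resonant band \(|k-\tau|\lesssim\tau\). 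On that band, spherical-harmonic projection bounds (Sogge's \(L^2\to L^q\) estimates for the cluster of width \(\sim\tau\) at frequency \(\sim\tau\)) together with one-dimensional Young's inequality in \(s\) give the power \(\tau^{\beta}\).

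\emph{Dimension \(d=2\).} The modes are \(e^{ik\theta}\) and projection bounds are trivial (\(L^\infty\) norm \(1\)). The same Young/Hausdorff--Young arguments give \(\beta_2=1-\frac1p+\frac{2}{pq}\): at \(p=q=2\) this is \(1\), and at \(q=\infty\) it is \(1-1/p\), obtained from \(L^p\to L^\infty\) bounds of the kernel.

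\emph{Main obstacle.} I expect the hardest step to be the near-resonant modes \(k\approx\tau\) at the off-diagonal exponents with sharp \(\tau\)-dependence. There the one-dimensional kernel decays only at rate \(|\tau-k|\ge 1/3\) and many modes contribute. I would first try the Jerison--Kenig decomposition by \(|k-\tau|\) dyadically, combined with Sogge's spectral cluster bounds, checking that the resulting powers of \(\tau\) sum to exactly \(\beta_d(p,q)\).
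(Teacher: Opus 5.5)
There is a genuine gap. Your frame is viable: log-polar coordinates, the explicit mode-by-mode kernel of $L^{-1}$, Plancherel at $p=q=2$, Jerison--Kenig at $(\frac{2d}{d+2},\frac{2d}{d-2})$, then interpolation. But the only substantive part of the theorem, the off-diagonal corner bounds with the exact power of $\tau$, is not proved. Your final paragraph names dyadic decomposition plus Sogge as the tool but does not carry it out. The mechanism you do describe loses powers of $\tau$.

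Write $G_k$ for the one-dimensional kernel of $L^{-1}$ on the $k$-th mode. On the band $|k-\tau|\le\tau/2$ one has $|G_k(h)|\le(2k+d-2)^{-1}e^{-|\tau-k||h|}\lesssim\tau^{-1}e^{-|h|/3}$. Suppose you use only this uniform bound, the cluster estimate $\|\sum_{|k-\tau|\le\tau/2}P_k\|_{L^2\to L^{2d/(d-2)}}\sim\tau^{(d-1)/d}$ for a cluster of width $\sim\tau$, and Young in $s$. Then you get $\|w\|_{L^{2d/(d-2)}}\lesssim\tau^{-1/d}\|Lw\|_{L^2}$, which falls short of the required $\tau^{-1/2}$ for every $d\ge3$.

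The missing idea is to keep the decay in $h$ mode by mode: for fixed $h$ only about $\min(\tau,|h|^{-1})$ modes matter. Apply Sogge's weighted estimate \eqref{eq:sogge-finite-sum} with coefficients $a_k=\tau G_k(h)$, which satisfy $|a_k|\le e^{-|\tau-k||h|}$. This gives an $s$-kernel $\lesssim\tau^{-1+\frac{d-2}{2d}}\min(\tau,|h|^{-1})^{1/2}e^{-c|h|}$. That kernel is bounded in $L^{d/(d-1)}(\mathbb R)$ uniformly in $\tau$, so Young yields $\tau^{-\frac{d+2}{2d}}$. The modes $k<\tau/2$, and the modes $k\ge2\tau$ taken in dyadic blocks $k\sim2^N\tau$ using the extra factor $k^{-1}$, contribute $O(\tau^{-1})$. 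This is the role of the kernels $K_{\tau,p}$ in Lemma~\ref{lem:Lminus-Lp-L2} and $H_\tau$ in the low-frequency part of the paper's $L^p\to L^q$ proposition. Your $d=2$, $q=\infty$ claim rests on the same uncomputed kernel bound.

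Second, you treat $(2,\frac{2d}{d-2})$ ``and its dual'' together. They are indeed dual: the formal adjoint of $L$ is $L$ conjugated by $s\mapsto-s$, so $\|L^{-1}\|_{L^p\to L^q}=\|L^{-1}\|_{L^{q'}\to L^{p'}}$. But the theorem asks for different exponents at the two corners: $\beta_d(\frac{2d}{d+2},2)=\frac12+\frac1d$, while $\beta_d(2,\frac{2d}{d-2})=\frac12$. Proving the latter corner with ``the power $\tau^\beta$'' and dualizing therefore loses $\tau^{1/d}$ exactly at $(p,q)=(\frac{2d}{d+2},2)$. That is the case Proposition~\ref{CarlpqV} relies on. You need $\tau^{\frac{d+2}{2d}}$ at one mixed corner; the weighted argument above gives it, and duality plus interpolation then give at least $\beta_d(p,q)$ everywhere.

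The paper does not work with corners. It proves the $q=2$ bound for every $p$ by factoring $e^{2\ell}\Delta=L^+L^-$: an $L^2$ gain $\tau^{-1}$ for $L^+$ is composed with an $L^p\to L^2$ loss $\tau^{\beta_0}$ for $L^-$. It proves the $q=\frac{2d}{d-2}$ bound by a separate low/high-frequency kernel analysis, and interpolates only in $q$ by H\"older.

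Finally, no analytic family is needed. Since $\beta_d$ is affine in $1/p$ and in $1/q$ separately, Riesz--Thorin for the fixed operator $L^{-1}$ along axis-parallel segments reproduces $\beta_d(p,q)$ exactly.
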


\begin{remark}
    In what follows we say that $\tau$ is non-resonant when \(
    \operatorname{dist}(\tau,\mathbb N)>\frac13,
\)
\end{remark}

The endpoint case, 
\(
    p=\frac{2d}{d+2},
  \ 
    q=\frac{2d}{d-2},
\) for $d\ge 3$,
is the classical  Carleman estimate of Jerison and Kenig \cite{JK85}. At this endpoint
the exponent above gives \(\beta=0\). The estimate in
Theorem~\ref{Carlpq} extends the endpoint estimate to a range of
\(p\)- and \(q\)-values and keeps track of the precise power of the Carleman parameter
\(\tau\).  This power is needed later when the potential term is absorbed and the
dependence on \(\|V\|_{L^t}\) is computed. The proof of Theorem \ref{Carlpq} is given in Section \ref{s:Carleman}.

\subsection{Application to the Schr\"odinger operators}
\label{CarlApp}
The proof of  Theorem \ref{Carlpq} is postponed to Section \ref{s:Carleman}. Now we show how the inequality is used to absorb the potential. We consider the Schr\"odinger operator $\Delta+V$ with $V\in L^t$, $d/2<t\le d$, and denote 
\begin{equation}\label{eq:mu}
\mu=\mu(d, t)=\frac{2td}{(2t-d)(d+2)},\quad {\text{for}}\ d\ge3, \qquad 
    \mu_\varepsilon(2, t)
    =
    \frac{t}{2(t-1)}+\varepsilon .
\end{equation}
\begin{proposition}
\label{CarlpqV}
\noindent\textup{(i)}
Let \(d\ge 3\) and
\(
    \frac d2<t\le d.
\)
Set
\[
    p=\frac{2d}{d+2},
    \qquad
    \beta=\frac{d+2}{2d}.
\]
There exists a constant \(C=C(d,t)\) such that, for every
\(
    V\in L^t(B_R),\)
\(    v\in C^\infty_0(B_R\setminus\{0\}),
\)
and every non-resonant \(\tau>1\) satisfying
\[
    \tau
    \ge
    C
    \left(
        1+R^{2-d/t}\|V\|_{L^t(B_R)}
    \right)^\mu,
\]
one has
\begin{equation}
    \tau^\beta
    \bigl\||x|^{-\tau}v\bigr\|_{L^2(|x|^{-d}\,dx)}
    \le
    C
    \bigl\|
        |x|^{-\tau+2}(\Delta+V)v
    \bigr\|_{L^p(|x|^{-d}\,dx)} .
    \label{main1}
\end{equation}

\noindent\textup{(ii)}
Let \(d=2\), 
\(
    1<t\le 2,
\)
and fix \(\varepsilon>0\). 
There exist
\(
    p_\varepsilon=p_\varepsilon(t,\varepsilon)\in(1,t)
\)
and a constant \(C_\varepsilon=C_\varepsilon(t,\varepsilon)\ge 1\)
such that, for every
\(    V\in L^t(B_R),\) and \(
    v\in C^\infty_0(B_R\setminus\{0\}),
\)
and every non-resonant \(\tau>1\) satisfying
\[
    \tau
    \ge
    C_\varepsilon
    \left(
        1+R^{2-2/t}\|V\|_{L^t(B_R)}
    \right)^{\mu_\varepsilon(2,t)},
\]
the inequality \eqref{main1} holds with  $p=p_\varepsilon$, $\beta=1$, and $C=C_\varepsilon$.
\end{proposition}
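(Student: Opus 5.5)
The proof absorbs the potential term into the Laplacian via two applications of Theorem~\ref{Carlpq} with the same $p$. One application has the target exponent $q=2$; the other has an auxiliary exponent $q_1$ chosen by Hölder duality with $V\in L^t$.

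\emph{Case $d\ge3$.} With $p=\frac{2d}{d+2}$ and $q=2$, the formula for $\beta_d(p,q)$ gives
\[
\beta_d(p,2)=\tfrac{d+2}{4}+\tfrac{(2-d)(d+2)}{4d}=\tfrac{d+2}{2d},
\]
which is the $\beta$ in \eqref{main1}. So it suffices to show that
\[
\bigl\||x|^{-\tau+2}\Delta v\bigr\|_{L^p(|x|^{-d}dx)}\le 2\bigl\||x|^{-\tau+2}(\Delta+V)v\bigr\|_{L^p(|x|^{-d}dx)} .
\]
By the triangle inequality, this reduces to proving that $\||x|^{-\tau+2}Vv\|_{L^p(|x|^{-d}dx)}$ is at most half of $\||x|^{-\tau+2}\Delta v\|_{L^p(|x|^{-d}dx)}$.

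The scale-invariant measure fits well here. Since $\operatorname{supp}v\subset B_R$ and $2t>d$,
\[
\bigl\||x|^{2}V\bigr\|_{L^t(B_R,|x|^{-d}dx)}^t=\int_{B_R}|V|^t|x|^{2t-d}\,dx\le R^{2t-d}\|V\|_{L^t(B_R)}^t ,
\]
so this weighted norm is at most $M_V(B_R)$. Define $q_1$ by $\frac1{q_1}=\frac1p-\frac1t$. Hölder's inequality in $L^p(|x|^{-d}dx)$ then gives
\[
\bigl\||x|^{-\tau+2}Vv\bigr\|_{L^p(|x|^{-d}dx)}\le M_V(B_R)\,\bigl\||x|^{-\tau}v\bigr\|_{L^{q_1}(|x|^{-d}dx)} .
\]

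Next we check that $(p,q_1)$ is admissible in Theorem~\ref{Carlpq}. From $t>d/2$ we get $\frac1{q_1}>\frac{d-2}{2d}$. From $t\le d$ we get $\frac1{q_1}\le\frac12$. Hence $2\le q_1<\frac{2d}{d-2}$. Applying the theorem with $(p,q_1)$ gives
\[
\||x|^{-\tau}v\|_{L^{q_1}}\le C\tau^{-\beta_1}\||x|^{-\tau+2}\Delta v\|_{L^p},\qquad \beta_1=\beta_d(p,q_1)=\tfrac{d+2}{2d}+\tfrac{d+2}{2}\Bigl(\tfrac1{q_1}-\tfrac12\Bigr)=\tfrac{(d+2)(2t-d)}{2dt}=\tfrac1\mu .
\]
The absorption therefore holds as soon as $CM_V(B_R)\tau^{-1/\mu}\le\frac12$. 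This is guaranteed by $\tau\ge C'(1+M_V(B_R))^\mu$, and the constant depends only on $d$ and $t$ because $q_1$ depends only on $d,t$. All quantities are finite, since $v$ is smooth and compactly supported away from $0$ and $V\in L^t$.

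\emph{Case $d=2$.} The scheme is identical, except that $p=p_\varepsilon\in(1,t)$ is a free parameter. With $q=2$ we get $\beta_2(p,2)=1-\frac1p+\frac1p=1$, as required. With $\frac1{q_1}=\frac1p-\frac1t$, we need $2\le q_1\le\infty$. This holds for $p$ close to $1$, since then $\frac1p-\frac1t\le1-\frac1t\le\frac12$ when $t\le2$. The auxiliary exponent is
\[
\beta_1=1-\tfrac1p+\tfrac2p\Bigl(\tfrac1p-\tfrac1t\Bigr)\longrightarrow \tfrac{2(t-1)}{t}\quad (p\to1).
\]
So we choose $p_\varepsilon$ close enough to $1$ that $1/\beta_1\le\frac{t}{2(t-1)}+\varepsilon$. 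The absorption condition then holds for $\tau\ge C_\varepsilon(1+M_V)^{\mu_\varepsilon(2,t)}$.

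The main obstacle, and the reason for the $\varepsilon$-loss, is that the Carleman constants of Theorem~\ref{Carlpq} blow up as $p\to1$. The endpoint $p=1$ is therefore unavailable, which forces $C_\varepsilon$ to depend on $\varepsilon$. For $d\ge3$ the computation above is exact and no such loss occurs.
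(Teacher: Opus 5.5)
Your proposal is correct and follows the paper's proof. You apply Theorem~\ref{Carlpq} twice with the same $p$, once at $q=2$ and once at the H\"older-dual exponent $q_1$ with $\frac1p-\frac1{q_1}=\frac1t$, so that $\beta_d(p,q_1)=1/\mu$ (and, for $d=2$, $\beta_2(p_\varepsilon,q_1)\to 2(t-1)/t$ as $p_\varepsilon\to1$). The only differences are cosmetic: you absorb $\||x|^{2-\tau}Vv\|_{L^p}$ into $\||x|^{2-\tau}\Delta v\|_{L^p}$ on the right-hand side rather than adding the two Carleman inequalities and absorbing on the left, and you place $|x|^2$ inside a weighted $L^t$ norm of $V$ rather than bounding $|x|^{2-d/t}$ in $L^\infty$.
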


\begin{proof}
(i) Since we choose $p=\frac{2d}{d+2}$, then  $\be=\be_d(\frac{2d}{d+2},2)=\frac {d+2} {2d}$. Hence we define $q$ such that $\frac 1 p - \frac 1 q = \frac 1 t$. 
Since $ \frac{d}{2}<t\le d$, we get $q \in [2, \frac{2d}{d-2}).$  For this choice of $p$ and $q$, we have 
\begin{align*}
    \beta_*=\beta_d(p,q)=\frac{(d+2)(2t-d)}{2dt}.
\end{align*}
Applying Theorem \ref{Carlpq} with  $q = \frac{2dt}{dt + 2t - 2d }$ and  also with $q = 2$, and adding the inequalities,  we get
\begin{align*}
\mathcal{J}:&=\tau^{\be}\norm{ |x|^{-\tau}v}_{L^2(|x|^{-d}dx)}
+ \tau^{\be_*} \norm{ |x|^{-\tau} v}_{L^q(|x|^{-d}dx)} 
\le C \norm{ |x|^{-\tau+2} \LP v}_{L^p(|x|^{-d} dx)} \\
&\le C \norm{ |x|^{-\tau+2} \pr{\LP v + Vv}}_{L^p(|x|^{-d} dx)}
+ C \norm{ |x|^{-\tau+2} V v}_{L^p(|x|^{-d} dx)} \\
&\le C \norm{ |x|^{-\tau+2} \pr{\LP v + Vv}}_{L^p(|x|^{-d} dx)} 
+ C \norm{ |x|^{2- \frac d p + \frac d q}}_{L^\iny} \norm{V}_{L^t(B_R)} \norm{ |x|^{-\tau}v}_{L^q(|x|^{-d}dx)} ,
\end{align*}
where we used the triangle inequality and H\"older's inequality.
Since $\frac 1 p - \frac 1 q = \frac{1}{t}$ and $t>\frac d 2$, we have $2- \frac d p + \frac d q = 2 - \frac d t > 0$. Therefore
\[
\mathcal{J}
\le C \norm{ |x|^{-\tau+2} \pr{\LP v + Vv}}_{L^p(|x|^{-d} dx)} 
+ C R^{2-d/t}\|V\|_{L^t(B_R)} \norm{ |x|^{-\tau} v}_{L^q(|x|^{-d}dx)} .
\]
By choosing $\tau>1$ such that $\tau^{\be_*} > CR^{2-d/t}\|V\|_{L^t(B_R)}$, we can incorporate the last term on the right hand side of the last inequality into the left hand side.
Note that $\mu = \frac 1 {\be_*}$.  Thus, the following Carleman estimates hold,
\begin{align*}
\tau^{\be} \norm{ |x|^{-\tau} v}_{L^2(|x|^{-d}dx)}
\le C \norm{ |x|^{-\tau+2} \pr{\LP v + Vv}}_{L^p(|x|^{-d} dx)}
\end{align*}
for non-resonant  $\tau> 1+C(R^{2-d/t}\|V\|_{L^t(B_R)})^\mu$.

(ii) The proof is similar, we choose 
\(p_\varepsilon>1\) sufficiently close to \(1\), and define \(q_\varepsilon\) by
\(
    \frac1{p_\varepsilon}-\frac1{q_\varepsilon}=\frac1t .
\)
Then \(q_\varepsilon\ge2\). Write
\[
    \beta_2(p_\varepsilon, q_\varepsilon)
    =
    1-\frac1{p_\varepsilon}
    +
    \frac{2}{p_\varepsilon q_\varepsilon}.
\]
We choose \(p_\varepsilon\) so close to \(1\) that
\[
  { \frac1 {\beta_2(p_\varepsilon, q_\varepsilon)}
    \le
    \frac{t}{2(t-1)}+\varepsilon=\mu_\varepsilon(2, t).}
\]
Applying Theorem~\ref{Carlpq} with \((p,q)=(p_\varepsilon,2)\) and
\((p,q)=(p_\varepsilon,q_\varepsilon)\), the same absorption argument as above gives
the two-dimensional statement with constants depending on \(t\) and \(\varepsilon\).
\end{proof}

For $d < t\leq \infty$, we apply the $L^2$ type Carleman estimates obtained in \cite{Z15}. We fix a large enough dimensional constant \(\Lambda\) and set
\[
    \ell_R(x)=\log\frac{|x|}{\Lambda R}.
\]
Thus \(\ell_R<0\) in \(B_R\). We also define \(\varphi(x)=\varphi_R(x)=-\ell_R-2\log|\ell_R|.
\)
Equivalently,
\[
    e^{\tau\varphi(x)}
    =
    \left(\frac{\Lambda R}{|x|}\right)^\tau
    |\ell_R|^{-2\tau}.
\]

   The following Carleman estimate was obtained in Corollary 2 in \cite{Z15}:
\begin{knlemma}
There exist constants \(C_1>0\) and \(\tau_0>0\), depending only on
\(d\), such that for every \(R>0\), every \(0<\rho<R\), every
\(v\in C^\infty_c(B_R\setminus B_\rho)\), and every
\(\tau\ge\tau_0\), one has
\begin{align}
\bigl\||x|^2 e^{\tau\varphi}\Delta v\bigr\|_{L^2(|x|^{-d}\,dx)}
&\ge C_1\tau^{3/2}
\bigl\|e^{\tau\varphi}|\ell_R|^{-1}v\bigr\|_{L^2(|x|^{-d}\,dx)}+ C_1\tau^{1/2}
\bigl\|e^{\tau\varphi}|\ell_R|^{-1}|x|\nabla v
\bigr\|_{L^2(|x|^{-d}\,dx)}
 \notag \\
 &+ C_1\tau^{1/2}\rho^{1/2}
\bigl\|e^{\tau\varphi}|x|^{-1/2}v
\bigr\|_{L^2(|x|^{-d}\,dx)} .
\label{car-est}
\end{align}
\end{knlemma}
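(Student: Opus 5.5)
The plan is to pass to logarithmic polar coordinates, conjugate $|x|^2\Delta$ by the weight $e^{\tau\varphi}$, and obtain \eqref{car-est} from the standard splitting into symmetric and antisymmetric parts, so that the positivity comes from the commutator. I have checked the outline but not every constant.

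\textbf{Step 1: reduction to a cylinder.} Write $x=e^{s}\omega$ with $\omega\in S^{d-1}$. Then $|x|^{-d}dx=ds\,d\omega$ and
\[
|x|^2\Delta=\partial_s^2+(d-2)\partial_s+\Delta_\omega .
\]
Put $\sigma=s-\log(\Lambda R)<0$, so that $\ell_R=\sigma$ and $\varphi=-\sigma-2\log|\sigma|$. This gives $\varphi'=-1+2/|\sigma|$ and $\varphi''=2/\sigma^2>0$. Choosing $\Lambda$ large makes $|\sigma|\ge\log\Lambda$, hence $|\varphi'|\simeq1$. The support condition $v\in C^\infty_c(B_R\setminus B_\rho)$ becomes $\operatorname{supp}w\subset\{\log\rho<s<\log R\}$.

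\textbf{Step 2: conjugation and splitting.} Set $w=e^{\tau\varphi}v$ and $L_\tau w=e^{\tau\varphi}|x|^2\Delta(e^{-\tau\varphi}w)$. Then
\[
L_\tau w=\partial_s^2w+\Delta_\omega w+\bigl(\tau^2\varphi'^2-\tau\varphi''-(d-2)\tau\varphi'\bigr)w+\bigl((d-2)-2\tau\varphi'\bigr)\partial_sw .
\]
Take the symmetric part $S=\partial_s^2+\Delta_\omega+\tau^2\varphi'^2+\dots$ and the antisymmetric part $A=-2\tau\varphi'\partial_s-\tau\varphi''+(d-2)\partial_s$. Lower-order terms such as $(d-2)\tau\varphi'$ are put into whichever part makes them harmless; they are absorbed once $\tau\ge\tau_0$. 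Expanding the square gives
\[
\|L_\tau w\|^2\ge\|Sw\|^2+\|Aw\|^2+\langle[S,A]w,w\rangle .
\]
The leading contributions of the commutator are $4\tau^3\varphi'^2\varphi''|w|^2\gtrsim\tau^3\sigma^{-2}|w|^2$ and $4\tau\varphi''|\partial_sw|^2\gtrsim\tau\sigma^{-2}|\partial_sw|^2$. The only error terms contain $\varphi'''=O(|\sigma|^{-3})$, which is dominated by $\varphi''$ since $|\sigma|\ge\log\Lambda$. This yields the first term $\tau^{3/2}\|e^{\tau\varphi}|\ell_R|^{-1}v\|$ on the right of \eqref{car-est}.

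\textbf{Step 3: gradient term.} Pair $Sw$ with $\tau\sigma^{-2}w$ and use $\|Sw\|^2\ge0$; this is the usual interpolation. It gives $\tau\|\sigma^{-1}\nabla_\omega w\|^2\lesssim\|Sw\|^2+\tau^3\|\sigma^{-1}w\|^2$. Combined with the $\partial_s$ control from Step 2, this bounds $\tau^{1/2}\|\sigma^{-1}(\partial_s,\nabla_\omega)w\|$. Passing back from $w$ to $v$ gives $|x|\nabla v$, up to a term $\tau\varphi'e^{\tau\varphi}v$. That term is handled by the zeroth-order bound only if the extra factor of $\tau$ is affordable. It is: $\tau^{1/2}\cdot\tau=\tau^{3/2}$ matches the first term, and $|\varphi'|\lesssim1$.

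\textbf{Step 4: the $\rho$-term (main obstacle).} The weight $\rho|x|^{-1}=e^{\log\rho-s}$ is at most $1$ on the support and is integrable in $s$. It decays exponentially away from the inner boundary, where $|\sigma|$ can be as large as $\log(\Lambda R/\rho)$. Plain Cauchy--Schwarz against $\sigma^{-2}$ would lose these logarithms, so I do not want to derive this term from Steps 2--3 that way. My first attempt is to keep the full antisymmetric term $\|Aw\|^2\ge\tau^2\|\varphi'\partial_sw-\tfrac12\varphi''w\|^2$, which carries no $\sigma^{-1}$ loss. Combine it with the one-dimensional bound
\[
\int e^{\log\rho-s}|w|^2\,ds=\int e^{\log\rho-s}\,2\operatorname{Re}(\bar w\,\partial_sw)\,ds ,
\]
which comes from integrating by parts using $\partial_s(-e^{\log\rho-s})=e^{\log\rho-s}$ and the vanishing boundary terms. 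This bounds $\int e^{\log\rho-s}|w|^2$ by $\|w\|_{L^2}\|\partial_sw\|_{L^2}$ with no logarithmic loss. Those unweighted norms are then controlled by $\tau^{-1}\|Aw\|$ together with the $\tau^{3/2}$ term. If the logarithmic factors still do not close, the fallback is to modify the weight: add $\epsilon\rho|x|^{-1}$ to $\varphi$, which is convex in $s$ and adds a commutator contribution $\gtrsim\tau^3\rho e^{-s}$. One then checks that this perturbation does not spoil Steps 2--3. This is where I expect the real work to be.
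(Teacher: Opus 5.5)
The paper does not prove this lemma; it is quoted from Corollary~2 of \cite{Z15}. Your Steps~1--3 are the standard argument and are sound. After absorbing the lower-order terms, which are all $O(\tau^2\sigma^{-2}+\tau\sigma^{-4})|w|^2$, the commutator is bounded below by $c\tau^3\|\sigma^{-1}w\|^2+c\tau\|\sigma^{-1}\partial_sw\|^2$. So you may keep both $\|Sw\|^2$ and $\|Aw\|^2$.

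The gap is in Step~4 as written. Once you use $e^{\log\rho-s}\le 1$, you are left with the unweighted norm $\|w\|_{L^2(ds\,d\omega)}$, and nothing you have controls it uniformly in $\rho$:
\begin{itemize}
\item the commutator only gives $\|\sigma^{-1}w\|$, and $|\sigma|$ reaches $\log(\Lambda R/\rho)$ on the support;
\item $\|Aw\|$ only gives $\|\partial_sw\|$, and a Poincar\'e inequality on an $s$-interval of length $\log(R/\rho)$ costs that same logarithm.
\end{itemize}
This is not just lossy bookkeeping. For suitable $\tau$, take $w=f(s)Y_k$ with $f$ a Gaussian concentrated near a point where $\tau|\varphi'|$ matches $k$ and $|\sigma|\gg\tau^{3/2}$. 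This gives $\|L_\tau w\|\approx\tau^{3/2}|\sigma|^{-1}\|w\|\ll\|w\|$. So the product $\|w\|\,\|\partial_sw\|$ reintroduces exactly the logarithmic loss you wanted to avoid.

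The repair is to keep the weight on both factors in Cauchy--Schwarz. Your integration-by-parts identity gives $I:=\int e^{\log\rho-s}|w|^2\le 2I^{1/2}\bigl(\int e^{\log\rho-s}|\partial_sw|^2\bigr)^{1/2}$, hence $I\le 4\|\partial_sw\|^2$. Next, $Aw=(2\tau|\varphi'|+d-2)\partial_sw-\tau\varphi''w$, and $2\tau|\varphi'|\ge\tau$ once $\log\Lambda\ge4$. Therefore
\[
\tau\|\partial_sw\|\le\|Aw\|+2\tau\|\sigma^{-2}w\|\le\|Aw\|+C\tau^{3/2}\|\sigma^{-1}w\|\le C\|L_\tau w\|.
\]
This yields $\tau^2\rho\,\|e^{\tau\varphi}|x|^{-1/2}v\|^2_{L^2(|x|^{-d}dx)}\le C\||x|^2e^{\tau\varphi}\Delta v\|^2_{L^2(|x|^{-d}dx)}$, which is stronger than the $\tau\rho$ the lemma asks for.

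The weight perturbation in your fallback is then unnecessary. As you state it, with a fixed $\epsilon$, it multiplies $e^{\tau\varphi}$ by a factor ranging up to $e^{\tau\epsilon}$ on the support. It therefore proves the lemma for the given $\varphi$ only if $\epsilon\lesssim\tau^{-1}$, in which case the gain is $\tau^2\rho e^{-s}$ rather than $\tau^3\rho e^{-s}$, still enough.
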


If we consider $V(x)\in L^t(B_R)$ for $t \in [d, \infty]$, we have the following quantitative Carleman estimates.

\begin{proposition}
\label{lem:large-t-carleman}
Let \(d\ge 3\), \(d<t<\infty\), and \(0<\rho<R\).  There exist constants
\(C_0,C_1>0\), depending only on \(d\) and \(t\), such that for every \(v\in C^\infty_0(B_R\setminus B_\rho)\), if
\[
\tau\ge
C_0\left(
1+R^{2-d/t}\|V\|_{L^t(B_R)}
\right)^{\frac{2t}{3t-2d}},
\]
then
\begin{align}
\bigl\||x|^2e^{\tau\varphi}(\Delta &+V)v
\bigr\|_{L^2(|x|^{-d}\,dx)}
\ge
C_1\tau^{3/2}
\bigl\|e^{\tau\varphi}|\ell_R|^{-1}v
\bigr\|_{L^2(|x|^{-d}\,dx)}
\notag \\
&+
C_1\tau^{1/2}
\bigl\|e^{\tau\varphi}|\ell_R|^{-1}|x|\nabla v
\bigr\|_{L^2(|x|^{-d}\,dx)}+
C_1\tau^{1/2}\rho^{1/2}
\bigl\|e^{\tau\varphi}|x|^{-1/2}v
\bigr\|_{L^2(|x|^{-d}\,dx)}.
\label{Carl:ln}
\end{align}

If \(d=2\) and \(2<t<\infty\), then for every \(\varepsilon>0\)
there exist constants \(C_{0,\varepsilon},C_{1,\varepsilon}>0\),
depending only on \(t\) and \(\varepsilon\), such that
\eqref{Carl:ln} holds, with \(C_1\) replaced by
\(C_{1,\varepsilon}\),  whenever
\[
\tau\ge
C_{0,\varepsilon}\left(
1+R^{2-2/t}\|V\|_{L^t(B_R)}
\right)^{\frac{2t}{3t-4}+\varepsilon}.
\]

If \(d\ge2\) and \(t=\infty\), there exist constants \(C_0,C_1>0\),
depending only on \(d\), such that \eqref{Carl:ln} holds whenever
\[
\tau\ge
C_0\left(
1+R^2\|V\|_{L^\infty(B_R)}
\right)^{2/3}.
\]
\label{proposi-2}
\end{proposition}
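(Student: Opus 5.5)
The plan is to deduce \eqref{Carl:ln} from the Carleman estimate \eqref{car-est} of the Lemma by treating \(Vv\) as a perturbation. By the triangle inequality,
\[
\bigl\||x|^2e^{\tau\varphi}\Delta v\bigr\|_{L^2(|x|^{-d}dx)}\le \bigl\||x|^2e^{\tau\varphi}(\Delta+V)v\bigr\|_{L^2(|x|^{-d}dx)}+\bigl\||x|^2e^{\tau\varphi}Vv\bigr\|_{L^2(|x|^{-d}dx)},
\]
so it suffices to show that the last term is at most \(\tfrac12\) of the first two terms on the right of \eqref{car-est} once \(\tau\) exceeds the stated threshold. Write \(M=R^{2-d/t}\|V\|_{L^t(B_R)}\). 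Let \(W=e^{\tau\varphi}v\), and set
\[
A=\bigl\|e^{\tau\varphi}|\ell_R|^{-1}v\bigr\|_{L^2(|x|^{-d}dx)},\qquad B=\bigl\|e^{\tau\varphi}|\ell_R|^{-1}|x|\nabla v\bigr\|_{L^2(|x|^{-d}dx)}.
\]

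I will work in log-polar coordinates \(x=e^{s}\omega\). In these coordinates \(|x|^{-d}dx=ds\,d\omega\), and \(|x|\nabla\) becomes the gradient on the cylinder \(\mathbb R\times S^{d-1}\). Put \(\widetilde V=|x|^2V\), so that \(\||x|^2e^{\tau\varphi}Vv\|_{L^2(|x|^{-d}dx)}=\|\widetilde VW\|_{L^2(ds\,d\omega)}\). I then decompose the cylinder into unit slabs \(S_k=[k,k+1]\times S^{d-1}\), which correspond to dyadic-type annuli. The key observation is a decay of \(\widetilde V\) toward the origin:
\[
\|\widetilde V\|_{L^t(S_k)}^t=\int_{e^k<|x|<e^{k+1}}|x|^{2t-d}|V|^t\,dx .
\]
Since \(2t-d>0\), this gives \(\|\widetilde V\|_{L^t(S_k)}\lesssim (e^{k}/\Lambda R)^{2-d/t}\|V\|_{L^t}\lesssim M\,e^{-(2-d/t)|\ell_R|}\lesssim M|\ell_R|^{-1}\) on \(S_k\). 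Moreover \(|\ell_R|\) varies by at most a bounded factor across a slab, because \(|\ell_R|\ge\log\Lambda\). The exponential decay therefore produces exactly the \(|\ell_R|^{-1}\) weight that the left side of \eqref{car-est} carries.

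On each slab I apply Hölder and then the Gagliardo--Nirenberg inequality on the compact piece \(S_k\) (which has dimension \(d\)), with \(r=2t/(t-2)\) and \(\theta=d(\tfrac12-\tfrac1r)=d/t\in(0,1)\):
\[
\|\widetilde VW\|_{L^2(S_k)}\le\|\widetilde V\|_{L^t(S_k)}\|W\|_{L^r(S_k)},\qquad \|W\|_{L^r(S_k)}\le C\|W\|_{L^2(S_k)}^{1-\theta}\|W\|_{H^1(S_k)}^{\theta}.
\]
Since \(|\nabla_{\rm cyl}\varphi|\) is bounded, we have \(|\nabla_{\rm cyl}W|\lesssim e^{\tau\varphi}(|x||\nabla v|+\tau|v|)\). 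Summing the squares over \(k\), using Hölder on the sum, and inserting the slab-constant weight \(|\ell_R|^{-1}\) gives
\[
\|\widetilde VW\|_{L^2}\lesssim M\,A^{1-\theta}(B+\tau A)^{\theta}.
\]
Young's inequality gives \(A^{1-\theta}B^\theta\le\tau^{-(3-2\theta)/2}(\tau^{3/2}A+\tau^{1/2}B)\) and \(\tau^\theta A=\tau^{\theta-3/2}\,\tau^{3/2}A\). The worse of the two powers is \(\tau^{-(3-2\theta)/2}=\tau^{-(3t-2d)/(2t)}\). Hence the perturbation is absorbed as soon as \(CM\tau^{-(3t-2d)/(2t)}\le C_1/2\), that is, for \(\tau\ge C_0(1+M)^{2t/(3t-2d)}\). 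This is the claimed threshold, and the \(\rho^{1/2}\) term of \eqref{car-est} is simply carried along unchanged.

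For \(t=\infty\) no interpolation is needed (\(\theta=0\)). Then \(\|\widetilde VW\|_{L^2}\lesssim R^2\|V\|_\infty A\), and this is absorbed when \(\tau^{3/2}\gtrsim R^2\|V\|_\infty\), which gives the exponent \(2/3\). For \(d=2\) the same argument applies with \(\theta=2/t\). I would allow an \(\varepsilon\)-loss there, coming from the borderline Sobolev/GN constants in dimension two, by using a slightly larger \(r\); this perturbs the exponent to \(\tfrac{2t}{3t-4}+\varepsilon\).

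The main obstacle is the mismatch of weights: \eqref{car-est} controls \(v\) and \(\nabla v\) only with the degenerate factor \(|\ell_R|^{-1}\), whereas Hölder and GN naturally produce unweighted norms. The slab decomposition combined with the exponential decay \(\|\widetilde V\|_{L^t(S_k)}\lesssim Me^{-(2-d/t)|\ell_R|}\) is what resolves this, and the step needing the most care is the bookkeeping of slab-wise GN constants and the summation over \(k\).
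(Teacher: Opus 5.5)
Your proof is correct, but it takes a different route from the paper's.

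\textbf{The paper's route.} The paper splits the potential by height, $V=V\mathbf 1_{\{|V|>\lambda\}}+V\mathbf 1_{\{|V|\le\lambda\}}$. The bounded part is absorbed by the $\tau^{3/2}$ term via $|x|^2|\ell_R|\le CR^2$. The large part is put in $L^d$ and controlled by the global Euclidean Sobolev inequality applied to $F=|x|^{2-d/2}e^{\tau\varphi}v$, using $\|\nabla F\|_{L^2}\le CR(\tau A+B)$ (your $A,B$). Finally the threshold is fixed by choosing $\lambda=cR^{-2}\tau^{3/2}$.

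\textbf{Your route.} You keep $V$ whole, pair it with $L^t$ by H\"older, and interpolate on the solution side with Gagliardo--Nirenberg, then Young. This yields the same exponent $\frac{2t}{3t-2d}$. In fact your two Young powers coincide exactly, rather than one being the worse. The $t=\infty$ case is identical to the paper's.

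\textbf{The slabs are not needed.} The weight mismatch you call the main obstacle is already handled by the pointwise bounds $|x|^2|\ell_R|\le CR^2$ and $|x||\ell_R|\le CR$. Applying Gagliardo--Nirenberg globally in $\mathbb R^d$ to the paper's $F$ gives, in one line,
$\|VF\|_{L^2}\le C\|V\|_{L^t}\|F\|_{L^2}^{1-\theta}\|\nabla F\|_{L^2}^{\theta}\le CM A^{1-\theta}(\tau A+B)^{\theta}$.
This bypasses the slab bookkeeping entirely.

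\textbf{Minor points.} The middle term in your bound for $\|\widetilde V\|_{L^t(S_k)}$ is missing the factor $R^{2-d/t}$, though your final bound is right. You should also enlarge $C_0$ so that $\tau\ge\max\{\tau_0,1\}$.

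\textbf{Your method is sharper in $d=2$.} With $r=2t/(t-2)<\infty$, the two-dimensional Gagliardo--Nirenberg inequality holds with $\theta=2/t$ and no borderline issue. You therefore get the threshold $(1+M)^{2t/(3t-4)}$ with no $\varepsilon$-loss, contrary to your remark. The $\varepsilon$ in the paper comes from its height splitting: it needs $V_1\in L^{a_\varepsilon}$ with $a_\varepsilon>2$, because $\dot H^1$ does not embed into $L^\infty$ in the plane. The stated result with $+\varepsilon$ follows trivially from yours.

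\textbf{Trade-off.} The paper's argument uses only H\"older and Sobolev in Euclidean coordinates. Yours uses interpolation, and in exchange gives the sharp two-dimensional exponent.
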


\begin{proof}
By increasing the constant in the lower bound for \(\tau\), we may
assume that \(\tau\ge\max\{\tau_0,1\}\), where \(\tau_0\) is the
constant in Lemma A. Set
\begin{align*}
A_0:=
\bigl\|e^{\tau\varphi}|\ell_R|^{-1}v
\bigr\|_{L^2(|x|^{-d} dx)},\quad 
A_1:=
\bigl\|e^{\tau\varphi}|\ell_R|^{-1}|x|\nabla v
\bigr\|_{L^2(|x|^{-d} dx)}, \\
A_2:=
\bigl\|e^{\tau\varphi}|x|^{-1/2}v
\bigr\|_{L^2(|x|^{-d} dx)},
 \quad X:=\tau^{3/2}A_0+\tau^{1/2}A_1+
\tau^{1/2}\rho^{1/2}A_2 
\end{align*}
By Lemma A and the triangle inequality,
there is a constant \(c_*>0\), depending only on \(d\), such that
\begin{align}
\bigl\||x|^2e^{\tau\varphi}(\Delta+V)v
\bigr\|_{L^2(|x|^{-d}\,dx)}
&\ge
\bigl\||x|^2e^{\tau\varphi}\Delta v
\bigr\|_{L^2(|x|^{-d}\,dx)}
-
\bigl\||x|^2e^{\tau\varphi}Vv
\bigr\|_{L^2(|x|^{-d}\,dx)}
\notag \\
&\ge
c_*X-
\bigl\||x|^2e^{\tau\varphi}Vv
\bigr\|_{L^2(|x|^{-d}\,dx)}.
\label{absorb-1}
\end{align}
It is therefore enough to show that the last term in (\ref{absorb-1}) is at most
\(c_*X/2\).

Assume first that \(d\ge3\) and \(d<t<\infty\). Let \(\lambda>0\)
and decompose \(V=V_1+V_2\), where
\(V_1=V\mathbf 1_{\{|V|>\lambda\}}\) and
\(V_2=V\mathbf 1_{\{|V|\le\lambda\}}\). Then
\begin{align}
\|V_2\|_{L^\infty(B_R)}\le\lambda,
\qquad \
\|V_1\|_{L^d(B_R)}
\le
\lambda^{1-t/d}\|V\|_{L^t(B_R)}^{t/d}.
\label{two-in}
\end{align}

Since \(|x|^2|\ell_R(x)|\le CR^2\) in \(B_R\), the bounded part in (\ref{two-in})
satisfies
\begin{align}
\bigl\||x|^2e^{\tau\varphi}V_2v
\bigr\|_{L^2(|x|^{-d}\,dx)}
\le C\lambda R^2A_0.
\label{bb-bb}
\end{align}
Thus this term is absorbed by the first term of \(X\) provided
\(\lambda R^2\le c\tau^{3/2}\).

For the singular part in (\ref{two-in}), define
\(F=|x|^{2-d/2}e^{\tau\varphi}v\). Applying Hölder's inequality and the Sobolev
inequality give
\begin{multline}
\bigl\||x|^2e^{\tau\varphi}V_1v
\bigr\|_{L^2(|x|^{-d}\,dx)}
=
\|V_1F\|_{L^2(B_R)}
\\\le
\|V_1\|_{L^d(B_R)}
\|F\|_{L^{2d/(d-2)}(B_R)}
\le
C\|V_1\|_{L^d(B_R)}
\|\nabla F\|_{L^2(B_R)}.
\label{holder-so}
\end{multline}
Since \(|\nabla\varphi(x)|\le C|x|^{-1}\),
\(|x||\ell_R(x)|\le CR\), and \(\tau\ge1\), we have
\[
\|\nabla F\|_{L^2(B_R)}
\le
C\tau
\bigl\||x|^{1-d/2}e^{\tau\varphi}v
\bigr\|_{L^2(B_R)}
+
C
\bigl\||x|^{2-d/2}e^{\tau\varphi}\nabla v
\bigr\|_{L^2(B_R)}
\le
CR\bigl(\tau A_0+A_1\bigr).
\]
Consequently, the last inequality and (\ref{holder-so} imply that
\[
\bigl\||x|^2e^{\tau\phi}V_1v
\bigr\|_{L^2(|x|^{-d}\,dx)}
\le
CR\|V_1\|_{L^d(B_R)}
\bigl(\tau A_0+A_1\bigr).
\]
Hence this term is absorbed by the first two terms of \(X\) provided
\(R\|V_1\|_{L^d(B_R)}\le c\tau^{1/2}\).

Choose \(\lambda=cR^{-2}\tau^{3/2}\), where \(c>0\) is sufficiently
small. The bounded part in (\ref{bb-bb}) is then absorbed. From (\ref{two-in}), The 
singular part can be absorbed if
\[
R\lambda^{1-t/d}\|V\|_{L^t(B_R)}^{t/d}
\le c\tau^{1/2}.
\]
After substituting the value of \(\lambda\), this becomes
\[
\left(
R^{2-d/t}\|V\|_{L^t(B_R)}
\right)^{t/d}
\le
c\tau^{(3t-2d)/(2d)}.
\]
Hence both terms are absorbed whenever
\[
\tau\ge
C\left(
1+R^{2-d/t}\|V\|_{L^t(B_R)}
\right)^{\frac{2t}{3t-2d}}.
\]
Thus the Carleman estimates (\ref{Carl:ln}) follows from (\ref{absorb-1})
for \(d\ge3\) and \(d<t<\infty\).

Suppose now that \(d=2\) and \(2<t<\infty\). Choose
\(a_\varepsilon\in(2,t)\) sufficiently close to \(2\) so that
\begin{align}
\frac{2t}{3t-2a_\varepsilon}
\le
\frac{2t}{3t-4}+\varepsilon.
\label{aaa-e}
\end{align}
Using the same decomposition \(V=V_1+V_2\). we have
\begin{align}
\|V_2\|_{L^\infty(B_R)}\le\lambda,
\qquad
\|V_1\|_{L^{a_\varepsilon}(B_R)}
\le
\lambda^{1-t/a_\varepsilon}
\|V\|_{L^t(B_R)}^{t/a_\varepsilon}.
\label{bb-bb-1}
\end{align}
Arguing as in (\ref{bb-bb}), the \(V_2\)-term in the last inequality is absorbed as above if
\(\lambda R^2\le c\tau^{3/2}\).

Let \(q_\varepsilon=2a_\varepsilon/(a_\varepsilon-2)\) and 
\(F=|x|e^{\tau\varphi}v\). Hölder's inequality and the
two-dimensional Sobolev inequality give
\begin{align*}
\bigl\||x|^2e^{\tau\varphi}V_1v
\bigr\|_{L^2(|x|^{-2}\,dx)}
&=
\|V_1F\|_{L^2(B_R)}
\le
\|V_1\|_{L^{a_\varepsilon}(B_R)}
\|F\|_{L^{q_\varepsilon}(B_R)}
\\
&\le
C_\varepsilon
R^{(a_\varepsilon-2)/a_\varepsilon}
\|V_1\|_{L^{a_\varepsilon}(B_R)}
\|\nabla F\|_{L^2(B_R)}.
\end{align*}
The bounds on \(\varphi\) and \(\ell_R\) imply
\[
\|\nabla F\|_{L^2(B_R)}
\le
CR\bigl(\tau A_0+A_1\bigr).
\]
Therefore
\[
\bigl\||x|^2e^{\tau\varphi}V_1v
\bigr\|_{L^2(|x|^{-2}\,dx)}
\le
C_\varepsilon
R^{2-2/a_\varepsilon}
\|V_1\|_{L^{a_\varepsilon}(B_R)}
\bigl(\tau A_0+A_1\bigr).
\]
This term is absorbed if
\[
R^{2-2/a_\varepsilon}
\|V_1\|_{L^{a_\varepsilon}(B_R)}
\le
c_\varepsilon\tau^{1/2}.
\]
Choose \(\lambda=cR^{-2}\tau^{3/2}\) as in the last case, the $V_1$ term in (\ref{bb-bb-1}) can be absorbed if 
\[
\left(
R^{2-2/t}\|V\|_{L^t(B_R)}
\right)^{t/a_\varepsilon}
\le
c_\varepsilon
\tau^{(3t-2a_\varepsilon)/(2a_\varepsilon)}.
\]
Therefore it is  enough that
\[
\tau\ge
C_\varepsilon
\left(
1+R^{2-2/t}\|V\|_{L^t(B_R)}
\right)^{\frac{2t}{3t-2a_\varepsilon}}.
\]
Then (\ref{aaa-e}) gives the stated lower bound for
\(\tau\) in this case and Carleman estimate (\ref{Carl:ln}) follows from (\ref{absorb-1}).

Finally, suppose that \(t=\infty\). No decomposition is needed.
Indeed,
\begin{align}
\bigl\||x|^2e^{\tau\varphi}Vv
\bigr\|_{L^2(|x|^{-d}\,dx)}
\le
\|V\|_{L^\infty(B_R)}
\bigl\||x|^2e^{\tau\varphi}v
\bigr\|_{L^2(|x|^{-d}\,dx)}
\le
CR^2\|V\|_{L^\infty(B_R)}A_0.
\label{potential-1}
\end{align}
This term is absorbed by the first term of \(X\) whenever
\(R^2\|V\|_{L^\infty(B_R)}\le c\tau^{3/2}\), which follows from
\[
\tau\ge
C\left(
1+R^2\|V\|_{L^\infty(B_R)}
\right)^{2/3}.
\]
After the potential term $V(x)$ is absorbed in (\ref{potential-1}), \eqref{Carl:ln} follows with
a  positive constant \(C_1\).
\end{proof}

Combining Proposition \ref{CarlpqV} and Proposition \ref{lem:large-t-carleman}, we extend the definition of \(\mu=\mu(d,t)\) by setting
\begin{equation}\label{mu3}
    \mu(d,t)
    =
    \begin{cases}
    \dfrac{2dt}{(2t-d)(d+2)}, & d/2<t\le d,\\
    \min\left\{
        \dfrac{2d}{d+2},
        \dfrac{2t}{3t-2d}
    \right\}, & d<t\le\infty .
    \end{cases}
\end{equation}
Here \(\frac{2t}{3t-2d}\) is understood as \(\frac23\) when \(t=\infty\).

If \(d=2\), fix \(\varepsilon>0\). For \(t>1\), set
\begin{equation}\label{mu2}
    \mu_\varepsilon(2,t)
    =
    \begin{cases}
    \dfrac{t}{2(t-1)}+\varepsilon, & 1<t\le2,\\
    \min\left\{
        1+\varepsilon,
        \dfrac{2t}{3t-4}+\varepsilon
    \right\}, & 2<t<\infty,\\
    \dfrac23, & t=\infty .
    \end{cases}
\end{equation}
Then, in all estimates below, the scale-normalized potential contribution may be written
as
\[
    \left(
        1+
        R^{2-d/t}\|V\|_{L^t(B_R)}
    \right)^{\mu(d,t)}
\ \mbox{for} \ d\ge3, \quad{\text{and}}\quad 
    \left(
        1+
        R^{2-2/t}\|V\|_{L^t(B_R)}
    \right)^{\mu_\varepsilon(2,t)}
\ \mbox{for} \  d=2.\]
In the case \(t=\infty\), the factor
\(R^{2-d/t}\|V\|_{L^t(B_R)}\) is understood as
\(R^2\|V\|_{L^\infty(B_R)}\).

\begin{remark}
The Carleman estimates above are stated for smooth compactly
supported functions, but we shall also apply them to functions of the
form \(v=\chi u\), where \(u\) is a weak solution of \eqref{ePDE}
and \(\chi\) is a smooth cutoff whose support stays away from the
center of the weight. This follows by a standard approximation
argument.
\end{remark}

\section{The proof of doubling inequality}
\label{Double}
\subsection{Three-ball inequality}
We will use a quantitative Caccioppoli inequality for the second order elliptic equation (\ref{ePDE}) with singular lower-order terms, which was obtained in \cite{DZ19}.
\begin{knlemma}
Let $t > \frac d 2$,  $\norm{V}_{L^t\pr{B_{R}}} \le M$, and 
let $u$ be a solution to \eqref{ePDE} in $B_R$.
Then there exists a constant $C$, depending only on $d$ and $t$, such that
\begin{equation}
\|\nabla u\|^2_{L^2(B_r)}\leq
C\brac{\frac{1}{(R-r)^2}+M^{\frac{2t}{2t-d}}}\|
u\|^2_{L^2( B_R)}
\label{Cacc-1}
\end{equation}
for any $r<R$.
\label{CaccLem}
\end{knlemma}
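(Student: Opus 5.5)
The plan is the standard energy argument: test the equation against $\eta^2\bar u$, with $\eta$ a cutoff equal to $1$ on $B_r$, supported in $B_R$, and $|\nabla\eta|\le C/(R-r)$. The Laplacian term gives the gradient energy plus a cross term, and the potential term is handled with H\"older, Sobolev and interpolation.

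\textbf{Step 1 (test the equation).} Since $u\in W^{1,2}_{\mathrm{loc}}$, the function $\eta^2\bar u$ is admissible (after the usual approximation). This gives
\[
\int \eta^2|\nabla u|^2 = -2\int \eta\,\bar u\,\nabla\eta\cdot\nabla u + \int V\eta^2|u|^2 .
\]
Taking real parts and applying Cauchy--Schwarz to the cross term, we obtain
\[
\int \eta^2|\nabla u|^2 \le \frac{C}{(R-r)^2}\int_{B_R}|u|^2 + 2\int |V|\,|\eta u|^2 .
\]

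\textbf{Step 2 (bound the potential term).} Set $w=\eta u\in W^{1,2}_0(B_R)$. H\"older with exponents $t$ and $t'=t/(t-1)$ gives $\int|V||w|^2\le M\|w\|_{L^{2t'}}^2$. Because $t>d/2$, we have $2<2t'<2^*=2d/(d-2)$ when $d\ge3$, and $2t'<\infty$ arbitrary when $d=2$.

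\textbf{Step 3 (interpolate).} Interpolating between $L^2$ and $L^{2^*}$ and using the Sobolev inequality for $w\in W^{1,2}_0$ gives
\[
\|w\|_{L^{2t'}}\le \|w\|_{L^2}^{1-\theta}\|w\|_{L^{2^*}}^{\theta}\le C\|w\|_{L^2}^{1-\theta}\|\nabla w\|_{L^2}^{\theta},\qquad \theta=\frac{d}{2t}.
\]
For $d=2$, I would instead use Gagliardo--Nirenberg directly, $\|w\|_{L^{2t'}}\le C\|w\|_{L^2}^{1-\theta}\|\nabla w\|_{L^2}^{\theta}$ with $\theta=1/t$. Checking it, $\theta$ is again $d/(2t)$, and the constant is scale-free. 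Either way,
\[
\int|V||w|^2\le CM\|w\|_2^{2-2\theta}\|\nabla w\|_2^{2\theta}.
\]
Young's inequality with exponents $1/\theta$ and $1/(1-\theta)$ then gives
\[
\int|V||w|^2\le \varepsilon\|\nabla w\|_2^2 + C_\varepsilon M^{1/(1-\theta)}\|w\|_2^2,
\]
and $1/(1-\theta)=2t/(2t-d)$, which is exactly the power claimed.

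\textbf{Step 4 (absorb).} Use $\|\nabla w\|_2^2\le 2\int\eta^2|\nabla u|^2+2\int|\nabla\eta|^2|u|^2$. Choosing $\varepsilon$ small absorbs the gradient term into the left side, and we get
\[
\int\eta^2|\nabla u|^2\le C\Bigl[(R-r)^{-2}+M^{2t/(2t-d)}\Bigr]\|u\|^2_{L^2(B_R)} .
\]
Since $\eta=1$ on $B_r$, this is \eqref{Cacc-1}.

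The only delicate points are justifying the test function for complex-valued $u$, and making sure the Sobolev/Gagliardo--Nirenberg constants are dimensional only, so that there is no extra $R$-dependence. The latter holds because $w$ has compact support and the inequalities used are homogeneous. The absorption in Step 4 is the main step, and it is routine.
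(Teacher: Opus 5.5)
Your proof is correct and complete. Testing with $\eta^2\bar u$, then applying H\"older with exponent $t$, Sobolev interpolation (Gagliardo--Nirenberg for $d=2$) with $\theta=d/(2t)$, and Young's inequality gives exactly the exponent $2t/(2t-d)$ with a scale-free constant depending only on $d,t$.

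The paper does not prove this lemma; it quotes it from \cite{DZ19}, and your energy argument is the standard route to it. The same H\"older--Gagliardo--Nirenberg--Young chain appears in the paper's appendix, as the $m=1$ case of \eqref{eq:power-caccioppoli}--\eqref{eq:moser-energy}. There, however, it is applied to $|u|$ via Kato's inequality, which only controls $\nabla|u|$. Testing directly with $\eta^2\bar u$, as you do, is the right choice when you need the full $\nabla u$ for complex $u$.
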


By the local boundedness estimate, see Lemma \ref{lem:L2-Linfty-local} in Appendix, if
\(u\) solves (\ref{ePDE}) in \(B_R(x_0)\), then for $\theta<1$,
\[
    \|u\|_{L^\infty(B_{\theta R})}
    \le
    C_\theta R^{-d/2}
    \left(
        1+
        \left(
            R^{2-d/t}\|V\|_{L^t(B_R)}
        \right)^{\frac{2t}{2t-d}}
    \right)^{d/4}
    \|u\|_{L^2(B_R)}.
\]

Our first aim is to obtain three-ball inequality for solutions of (\ref{ePDE}) when $V(x)\in L^t$ for $\frac{d}{2}<t\leq \infty$. The deduction of three-ball theorem from Carleman inequalities is rather standard, see e.g. \cite{Z15}, we sketch the proof omitting some details. We will need the following notation
\begin{align}A_V(B_{r_2})
    =
    \left(
        1+
        r_2^{2-d/t}\|V\|_{L^t(B_{r_2})}
    \right)^\mu,
    \label{AAA-1}
\end{align}
where $\mu$ is defined by \eqref{mu3} for $d\ge 3$. For $d=2$, we fix $\varepsilon>0$ and define $\mu$ by \eqref{mu2}.
\begin{lemma}
\label{lem:three-ball}
Let \(d\ge2\), \(t>d/2\), and let \(u\) be a weak solution of \eqref{ePDE}
in \(B_{r_2}\). Let
$0<r_1<r<r_2.$
Then there exist constants \(C>0\) and \(\alpha\in(0,1)\), depending only on
\(d,t\) the ratios \(r/r_1\) and \(r_2/r\), and $\varepsilon$ when $d=2$ such that
\begin{equation}\label{eq:three-ball}
    \|u\|_{L^2(B_r)}
    \le
    C e^{CA_V(B_{r_2})}
    \|u\|_{L^2(B_{r_1})}^{\alpha}
    \|u\|_{L^2(B_{r_2})}^{1-\alpha}.
\end{equation}
\end{lemma}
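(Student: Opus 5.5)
We prove \eqref{eq:three-ball} by the standard cutoff argument applied to the Carleman inequalities of Proposition~\ref{CarlpqV} (when $d/2<t\le d$) or Proposition~\ref{lem:large-t-carleman} (when $t>d$), whichever gives the exponent $\mu$ in \eqref{mu3}. The center of the weight is the common center of the balls. By scaling we may take $r_2=1$; the quantity $r_2^{2-d/t}\|V\|_{L^t(B_{r_2})}$ is scale invariant, so $A_V$ is unchanged.

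\textbf{Cutoff.} Fix a smooth cutoff $\chi$ with $\chi\equiv1$ on the annulus $r_1/2\le|x|\le (r+r_2)/2\cdot\theta$ (with $\theta<1$ chosen so this stays below $r_2$). Let $\chi$ vanish near $0$ and outside $B_{r_2'}$ for some $r<r_2'<r_2$. Take $|\nabla\chi|\lesssim \rho^{-1}$ and $|\Delta\chi|\lesssim\rho^{-2}$ on each transition region of width $\rho$.

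\textbf{Applying the Carleman inequality.} Set $v=\chi u$. Then $(\Delta+V)v=2\nabla\chi\cdot\nabla u+u\Delta\chi$, which is supported in two shells: an inner shell $S_1\subset B_{r_1}\setminus B_{r_1/2}$ and an outer shell $S_2\subset B_{r_2'}\setminus B_{\theta' r_2}$. We apply the Carleman inequality with a non-resonant $\tau\ge C A_V(B_1)$; the threshold in the Propositions is exactly $C(1+M)^\mu$.

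\emph{Left-hand side.} We bound it below by the contribution of the annulus $r_1\le|x|\le r$. There the weight $|x|^{-\tau}$ is at least $r^{-\tau}$, and the measure $|x|^{-d}dx$ is comparable to $dx$ with ratio constants. This gives a lower bound $\tau^\beta r^{-\tau}c\,\|u\|_{L^2(r_1<|x|<r)}$, or the analogous bound with the log-weight $e^{\tau\varphi}$ in the $t>d$ case.

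\emph{Right-hand side.} It is an $L^p$ norm with $p\le2$ over bounded shells, so Hölder's inequality turns it into $L^2$ norms of $u$ and $\nabla u$ on the shells. Lemma~\ref{CaccLem} (Caccioppoli) then bounds $\|\nabla u\|_{L^2}$ on each shell by $\|u\|_{L^2}$ on a slightly larger ball. The Caccioppoli factor is $(1+M^{2t/(2t-d)})^{1/2}$, which is harmless because it is at most $e^{C A_V}$. The result is
\[
r^{-\tau}\|u\|_{L^2(B_r\setminus B_{r_1})}\le C e^{CA_V}\Bigl( (r_1/2)^{-\tau}\|u\|_{L^2(B_{r_1})}+ (\theta' )^{-\tau}\|u\|_{L^2(B_{1})}\Bigr).
\]
Here the powers of $\tau$ have been absorbed into the exponential. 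For the log-weight we use that $e^{\tau\varphi}$ is monotone decreasing in $|x|$, with ratios $e^{c\tau}$ between the shells. Adding $\|u\|_{L^2(B_{r_1})}$ to both sides gives the same inequality for $\|u\|_{L^2(B_r)}$.

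\textbf{Optimizing in $\tau$.} Since $\theta'>r$, the two ratios satisfy $(r/(r_1/2))^\tau\to\infty$ and $(r/\theta')^\tau\to0$. Write $a=\|u\|_{L^2(B_{r_1})}$ and $b=\|u\|_{L^2(B_1)}$. We choose $\tau$ to balance the two terms, namely $\tau\approx \log(b/a)/\log(2\theta'/r_1)$; this yields $\|u\|_{B_r}\le Ce^{CA_V}a^\alpha b^{1-\alpha}$ with $\alpha=\log(\theta'/r)/\log(2\theta'/r_1)$.

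This choice is admissible only if it exceeds $\tau_*=CA_V$. If the balancing $\tau$ is smaller, we use $\tau=\tau_*$, shifted to be non-resonant. Then the second term dominates, and we get $\|u\|_{B_r}\le e^{C\tau_*}b$; but $b\le (b/a)^\alpha a^{\alpha}\cdots$, and more precisely $b/a\le e^{C\tau_*}$ in this regime, so $b=a^\alpha b^{1-\alpha}(b/a)^\alpha\le e^{C\tau_*}a^\alpha b^{1-\alpha}$.

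\textbf{Main obstacle.} The main difficulty is this bookkeeping: absorbing the polynomial powers of $\tau$, the Caccioppoli and $L^p\to L^2$ losses, and the non-resonance restriction into $e^{CA_V}$. The last point is handled because shifting $\tau$ by at most $1/2$ costs only a ratio-dependent constant factor.
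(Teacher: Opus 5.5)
Your proposal is correct and follows essentially the paper's argument: you apply the potential-absorbed Carleman estimate (power or logarithmic weight, whichever realizes $\mu$) to $\chi u$ with non-resonant $\tau\ge CA_V$, use Caccioppoli on the two transition shells, and take $\tau$ to be the larger of the balancing value and $CA_V$. There are two harmless slips. When the balancing value lies below $CA_V$, it is the inner term that dominates, not the outer one, though your bound $\|u\|_{L^2(B_r)}\le e^{CA_V}\|u\|_{L^2(B_1)}$ holds trivially anyway. Also, for $d<t\le d^2/(d-1)$ the exponent $\mu=2d/(d+2)$ comes from Proposition~\ref{CarlpqV} applied with $V\in L^d$, not from Proposition~\ref{lem:large-t-carleman}.
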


\begin{proof}
By scaling, it is enough to consider the case \(r_2=1\). We write
\(A=A_V(B_1)\) and choose
\[
\rho=\frac{1+r}{2},\qquad s=\frac{1+\rho}{2}.
\]
Let \(\chi\in C^\infty_0(B_s)\) satisfy
\(
\chi=0\) in \(B_{r_1/2},\) and
\(\chi=1\) in \(B_\rho\setminus B_{3r_1/4}.
\)
We may choose \(\chi\) so that
\[
|\nabla\chi|\le Cr_1^{-1},\qquad
|\Delta\chi|\le Cr_1^{-2}
\]
on \(B_{3r_1/4}\setminus B_{r_1/2}\), while
\(|\nabla\chi|+|\Delta\chi|\le C\) on \(B_s\setminus B_\rho\).
Here and below the constants may depend on the ratios \(r/r_1\)
and \(1/r\).

Since \(u\) is a solution to \eqref{ePDE}, we have
\begin{equation}
(\Delta+V)(\chi u)
=2\nabla\chi\cdot\nabla u+(\Delta\chi)u.
\label{sim-eqn}
\end{equation}
The Caccioppoli inequality in (\ref{Cacc-1}) gives
\begin{align}
r_1\|\nabla u\|_{L^2(B_{3r_1/4})}
\le Ce^{CA}\|u\|_{L^2(B_{r_1})},\qquad
\|\nabla u\|_{L^2(B_s)}
\le Ce^{CA}\|u\|_{L^2(B_{1})}.
\label{caccio-1}
\end{align}
Indeed, all polynomial factors in (\ref{Cacc-1}) in the scale-normalized
\(L^t\)-norm of \(V\) are bounded by \(Ce^{CA}\).

We use the potential-absorbed Carleman estimate corresponding to the
exponent in the definition of \(\mu(d,t)\), or of
\(\mu_\varepsilon(2,t)\) when \(d=2\). If \(d\ge3\) and
\(d/2<t\le d\), or if \(d=2\) and \(1<t\le2\), we apply
Proposition~\ref{CarlpqV} with the power weight. If \(d\ge3\) and
\(t>d\), the two alternatives in the definition of \(\mu(d,t)\)
come respectively from Proposition~\ref{CarlpqV}, applied with
exponent \(d\), and from Proposition~\ref{proposi-2}; we use the
estimate which gives the smaller exponent. Similarly, if \(d=2\)
and \(t>2\), we use either Proposition~\ref{CarlpqV}, applied with
exponent \(2\), or Proposition~\ref{proposi-2}, according to which
alternative realizes \(\mu_\varepsilon(2,t)\). For \(t=\infty\), we
use Proposition~\ref{proposi-2}. In what follows, \(\mu\) denotes the
corresponding exponent.


For every non-resonant \(\tau\ge C_0A\), applying Proposition~\ref{CarlpqV} with $v=\chi u$ and 
H\"older's inequality, by (\ref{sim-eqn}),  we have
\begin{align*}
\tau^\beta \| |x|^{-\tau}\chi u\|_{L^2(|x|^{-d}dx)} &\leq C\|  |x|^{-\tau+2}(\triangle (\chi u)+V(x)\chi u)\|_{L^p(|x|^{-d}dx)}  
\\&\leq C \| |x|^{-\tau+2}[\triangle\chi u+2 \nabla \chi \cdot \nabla u ] \|_{L^p(|x|^{-d}dx)},
\end{align*}
By the properties of cut-off function $\chi$ and the estimates (\ref{caccio-1}), we obtain 
\begin{align}
\|u\|_{L^2(B_r\setminus B_{r_1})}
\le Ce^{CA}\left[
\left(\frac{2r}{r_1}\right)^\tau \|u\|_{L^2(B_{r_1})}
+\left(\frac r\rho\right)^\tau \|u\|_{L^2(B_{1})}
\right].
\label{appl-car-1}
\end{align}


When applying Proposition~\ref{proposi-2}, we set \(\rho=r_1/2\) in \(B_1\). Recall that
\(\varphi=\varphi_1\) is decreasing as a function of \(|x|\) in
\(B_1\). The factors involving \(|\ell_1|\) are bounded above and
below on each of the annuli under consideration by constants
depending only on \(r/r_1\) and \(1/r\). Hence, for every
\(\tau\ge C_0A\), the similar arguments as (\ref{appl-car-1}) shows that
\begin{align}
\|u\|_{L^2(B_r\setminus B_{r_1})}
\le Ce^{CA}\left(
e^{\tau(\varphi(r_1/2)-\varphi(r))}\|u\|_{L^2(B_{r_1})}
+e^{-\tau(\varphi(r)-\varphi(\rho))}\|u\|_{L^2(B_{1})}\right).
\label{appl-car-2}
\end{align}  

Adding $\|u\|_{L^2(B_{r_1})}$ to the left hand side of (\ref{appl-car-1}) or (\ref{appl-car-2}) , we conclude in either case
that there exist \(a,b>0\), depending only on \(r/r_1\) and \(1/r\),
such that
\begin{align}
\|u\|_{L^2(B_r)}\le Ce^{CA}\left(e^{a\tau}\|u\|_{L^2(B_{r_1})}+e^{-b\tau}\|u\|_{L^2(B_1)}\right)
\label{abab-1}
\end{align}
for every admissible \(\tau\ge C_0A\). In the power-weight case  (\ref{appl-car-1})  we
may take
\[
a=\log\frac{2r}{r_1},\qquad b=\log\frac{\rho}{r},
\]
while in the logarithmic-weight case (\ref{appl-car-2}) we take
\[
a=\varphi(r_1/2)-\varphi(r),\qquad
b=\varphi(r)-\varphi(\rho).
\]

 Since \(\|u\|_{L^2(B_{r_1})}\le \|u\|_{L^2(B_1)}\), the number
\begin{align}
\tau_*:=\frac{1}{a+b}\log\frac {\|u\|_{L^2(B_1)}}{\|u\|_{L^2(B_{r_1})}}
\end{align}
is nonnegative. Set
\(
\alpha=\frac{b}{a+b}.
\)
Then
\begin{align}
e^{a\tau_*}\|u\|_{L^2(B_{r_1})}=e^{-b\tau_*}\|u\|_{L^2(B_{1})}=\|u\|_{L^2(B_{r_1})}^\alpha \|u\|_{L^2(B_1)}^{1-\alpha}.
\label{tau-star}
\end{align}

In the logarithmic-weight case (\ref{appl-car-2}), we choose
\(\tau=\max\{C_0A,\tau_*\}\). In the power-weight case (\ref{appl-car-1}), we choose a
non-resonant \(\tau\) such that
\[
\max\{C_0A,\tau_*\}
\le\tau\le
\max\{C_0A,\tau_*\}+1.
\]
In both cases, for \(\tau\ge\tau_*\), it follows from (\ref{tau-star}) that
\begin{align}
e^{-b\tau}\|u\|_{L^2(B_1)}\le \|u\|_{L^2(B_{r_1})}^\alpha \|u\|_{L^2(B_1)}^{1-\alpha}.
\label{coro-1}
\end{align}
Moreover, since \(A\ge1\),
\begin{align}
e^{a\tau}\|u\|_{L^2(B_{r_1})}
=e^{a(\tau-\tau_*)}\|u\|_{L^2(B_{r_1})}^\alpha \|u\|_{L^2(B_1)}^{1-\alpha}
\le e^{CA}\|u\|_{L^2(B_{r_1})}^\alpha \|u\|_{L^2(B_1)}^{1-\alpha}.
\label{coro-2}
\end{align}
It follows from (\ref{abab-1}), (\ref{coro-1}) and  (\ref{coro-2}) that
\[
\|u\|_{L^2(B_r)}\le Ce^{CA}\|u\|_{L^2(B_{r_1})}^\alpha \|u\|_{L^2(B_1)}^{1-\alpha}.
\]
Rescaling back to \(B_{r_2}\) proves \eqref{eq:three-ball}.
\end{proof}

By repeatedly  using the three-ball inequality \eqref{eq:three-ball} and propagation of smallness argument, we can show the following results. 

\begin{lemma}
\label{lem:change-center}
Let \(u\) be a nontrivial weak solution of \eqref{ePDE}
in \(B_R\), where \(V\in L^t(B_R)\), \(t>d/2\). Let
\(
    0<\rho<\frac{R}{16}\) and
    \(x,y\in B_{R/2}.
\)
Set as above
\[
    A=
    \left(
        1+
        R^{2-d/t}\|V\|_{L^t(B_R)}
    \right)^\mu .
\]
Then there exist constants \(C>0\) and \(\gamma\in(0,1)\), depending only on
\(d,t\) and on \(\rho/R\), such that
\begin{equation}
    \|u\|_{L^2(B_\rho(y))}
    \le
    C e^{CA}
    \|u\|_{L^2(B_\rho(x))}^{\gamma}
    \|u\|_{L^2(B_R)}^{1-\gamma}.
    \label{change:center}
\end{equation}
Consequently, if
\[
    N_R=
    \log
    \frac{\|u\|_{L^2(B_R)}}
         {\|u\|_{L^2(B_{R/2})}},
\]
then for every \(x\in B_{R/2}\),
\begin{equation}
    \|u\|_{L^2(B_\rho(x))}
    \ge
    e^{-C(A+N_R)}
    \|u\|_{L^2(B_{R/2})},
    \label{lower:center}
\end{equation}
where \(C\) depends only on \(d,t\), on \(\rho/R\), and on $\varepsilon$ for $d=2$.
\end{lemma}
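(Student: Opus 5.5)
The plan is to propagate smallness along a chain of balls of radius comparable to \(\rho\) joining \(x\) to \(y\), using the three-ball inequality of Lemma~\ref{lem:three-ball} at each step.

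First I note that for any center \(z\in B_{3R/4}\) and any \(r_2\le R/8\), the ball \(B_{r_2}(z)\) lies in \(B_R\). Hence \(r_2^{2-d/t}\|V\|_{L^t(B_{r_2}(z))}\le R^{2-d/t}\|V\|_{L^t(B_R)}\), because \(2-d/t>0\). It follows that \(A_V(B_{r_2}(z))\le A\), and translation invariance lets me apply Lemma~\ref{lem:three-ball} at any such center with the constant \(e^{CA}\).

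Next I set up the chain. Put \(x_0=x\), \(x_N=y\), and let \(x_j\) be equally spaced points on the segment \([x,y]\subset B_{R/2}\) with \(|x_{j+1}-x_j|\le \rho/2\). Then \(N\le 2R/\rho+1\), which depends only on \(\rho/R\). For each \(j\) I apply Lemma~\ref{lem:three-ball} centered at \(x_j\) with radii \(r_1=\rho/2\), \(r=3\rho/2\), and \(r_2=4\rho\le R/4\). The ratios are fixed, so the constants \(C,\alpha\) are fixed as well. Since \(B_{\rho/2}(x_j)\subset B_\rho(x_j)\) and \(B_\rho(x_{j+1})\subset B_{3\rho/2}(x_j)\), I obtain
\[
\|u\|_{L^2(B_\rho(x_{j+1}))}\le Ce^{CA}\|u\|_{L^2(B_\rho(x_j))}^{\alpha}\|u\|_{L^2(B_R)}^{1-\alpha}.
\]
I then write \(m_j=\|u\|_{L^2(B_\rho(x_j))}/\|u\|_{L^2(B_R)}\le 1\), so that \(m_{j+1}\le Ce^{CA}m_j^{\alpha}\). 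Iterating \(N\) times gives \(m_N\le (Ce^{CA})^{1+\alpha+\dots}m_0^{\alpha^N}\le C'e^{C'A}m_0^{\alpha^N}\). This is \eqref{change:center} with \(\gamma=\alpha^N\). The only care needed is that exponents accumulate geometrically, so the prefactor stays \(e^{C A}\) with \(C\) depending on \(N\), i.e.\ on \(\rho/R\).

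For \eqref{lower:center}, I cover \(B_{R/2}\) by \(K\lesssim (R/\rho)^d\) balls \(B_\rho(y_k)\) with \(y_k\in B_{R/2}\). Then
\[
\|u\|_{L^2(B_{R/2})}^2\le \sum_k\|u\|_{L^2(B_\rho(y_k))}^2\le K\,C^2e^{2CA}\|u\|_{L^2(B_\rho(x))}^{2\gamma}\|u\|_{L^2(B_R)}^{2(1-\gamma)}.
\]
Substituting \(\|u\|_{L^2(B_R)}=e^{N_R}\|u\|_{L^2(B_{R/2})}\) and rearranging gives
\[
\|u\|_{L^2(B_\rho(x))}^{\gamma}\ge C^{-1}e^{-CA-(1-\gamma)N_R}\|u\|_{L^2(B_{R/2})}^{\gamma}.
\]
Taking the \(1/\gamma\) power yields \eqref{lower:center}, with \(C\) enlarged by the factor \(1/\gamma\). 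The main point requiring attention is the monotonicity of the scale-normalized potential norm under shrinking balls, which is exactly where \(t>d/2\) enters; the remaining steps are bookkeeping. For \(d=2\), the same argument works with \(\mu_\varepsilon\), and the constants then also depend on \(\varepsilon\).
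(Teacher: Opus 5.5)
Your proof is correct and follows essentially the paper's argument: a chain of three-ball inequalities at scale $\rho$ giving $\gamma=\alpha^m$, then a covering of $B_{R/2}$ by $\rho$-balls. Centering each three-ball step at $x_j$ with radii $\rho/2<3\rho/2<4\rho$ (the paper centers at $x_{j+1}$ with $\rho/2<\rho<4\rho$) and summing over the cover (the paper picks the ball carrying the largest norm) are only cosmetic differences. One small slip: $4\rho$ can exceed the $R/8$ in your containment remark, but since the chain lies in $B_{R/2}$ you still have $B_{4\rho}(x_j)\subset B_{3R/4}\subset B_R$, so nothing changes.
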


\begin{proof}
First we  prove \eqref{change:center}. Choose a chain of points
\(
    x=x_0',x_1',\ldots,x_m'=y
\)
such that
\[
    |x_{j+1}'-x_j'|\le \frac{\rho}{2},
    \qquad
    m\le C\frac{R}{\rho},
\]
and
\(
    B_{4\rho}(x_j')\subset B_R
\)
for all \(j\). The constant in this construction depends only on \(d\) and on
\(\rho/R\).
We apply Lemma~\ref{lem:three-ball} with radii
\(
    \frac{\rho}{2}<\rho<4\rho
\)
and center \(x_{j+1}'\). Since \(B_{\rho/2}(x_{j+1}')\subset B_\rho(x_{j}')\), we get
\[
    \|u\|_{L^2(B_\rho(x_{j+1}'))}
    \le
    C e^{CA}
    \|u\|_{L^2(B_\rho(x_{j}'))}^{\alpha}
    \|u\|_{L^2(B_R)}^{1-\alpha},
\]
where \(C>0\) and \(\alpha\in(0,1)\) depend only on \(d,t\). Here we used that the
scale-normalized potential term on \(B_{4\rho}(x_j')\) is bounded by the corresponding
quantity on \(B_R\).

Iterating this estimate along the chain gives
\[
    \|u\|_{L^2(B_\rho(y))}
    \le
    C e^{CA}
    \|u\|_{L^2(B_\rho(x))}^{\gamma}
    \|u\|_{L^2(B_R)}^{1-\gamma},
\]
with
\(
    \gamma=\alpha^m.
\)
Since \(m\le C R/\rho\), the constants \(C\) and \(\gamma\) depend only on
\(d,t\) and \(\rho/R\). This proves \eqref{change:center}.

It remains to prove \eqref{lower:center}. We cover \(B_{R/2}\) by at most
\(C(R/\rho)^d\) balls of radius \(\rho\), with centers in \(B_{R/2}\). Hence there
is a center \(y\in B_{R/2}\) such that
\[
    \|u\|_{L^2(B_\rho(y))}
    \ge
    c
    \|u\|_{L^2(B_{R/2})},
\]
where \(c>0\) depends only on \(d\) and \(\rho/R\). Applying \eqref{change:center} we get
\[
    \|u\|_{L^2(B_\rho(x))}
    \ge
    C' e^{-CA/\gamma}
    \|u\|_{L^2(B_\rho(y))}^{1/\gamma}
    \|u\|_{L^2(B_R)}^{-(1-\gamma)/\gamma}.
\]
Using the lower bound for \(\|u\|_{L^2(B_\rho(y))}\) and the definition of \(N_R\), we
obtain
\[
    \|u\|_{L^2(B_\rho(x))}
    \ge
    e^{-C(1+A+N_R)}
    \|u\|_{L^2(B_{R/2})}.
\]
This proves the lemma.
\end{proof}

\subsection{Proof of Theorem \ref{th-1}}
Next we proceed to show the doubling inequality for the solutions of the Schr\"odinger equation (\ref{ePDE}). The argument is somewhat parallel to the proof of three-ball inequality. 

\begin{proof}[Proof of Theorem \ref{th-1}]
We give the proof for $R=8$; the general statement follows by
rescaling. Set
\[
A=A_V(B_8),\qquad
N=\log\frac{\|u\|_{L^2(B_8)}}{\|u\|_{L^2(B_4)}}.
\]

 Fix \(R_0=1/4\) and $x\in B_2$. The balls and annuli below are centered with $x$ and  \(A_{a,b}=B_b\setminus B_a\). We first consider \(0<r<R_0/12\). Choose
\(\psi\in C^\infty_0(B_{2R_0}\setminus\overline{B_{r/2}})\) such that
\(
\psi=1\) on \(A_{3r/4,\, R_0},
\)
and
\[
|\nabla\psi|\le Cr^{-1},\qquad
|\Delta\psi|\le Cr^{-2}
\quad\text{on }A_{r/2,\,3r/4},
\]
while
\[
|\nabla\psi|\le CR_0^{-1},\qquad
|\Delta\psi|\le CR_0^{-2}
\quad\text{on }A_{R_0,\,2R_0}.
\]
Thus
\[
\operatorname{supp}\nabla\psi\cup
\operatorname{supp}\Delta\psi
\subset A_{r/2, \, 3r/4}\cup A_{R_0,\, 2R_0},
\]
whereas
\[
A_{r,\, 2r}\cup A_{R_0/2, \, 2R_0/3}\subset\{\psi=1\}.
\]

Since \(u\) is a solution of \eqref{ePDE}, it holds that
\[
(\Delta+V)(\psi u)
=2\nabla\psi\cdot\nabla u+(\Delta\psi)u.
\]
The Caccioppoli estimate (\ref{Cacc-1}), applied on slightly larger annuli, gives
\begin{align}
r\|\nabla u\|_{L^2(A_{r/2, \, 3r/4})}
+\|u\|_{L^2(A_{r/2,\,3r/4})}
\le Ce^{CA}\|u\|_{L^2(B_r)}
\label{caccc-1}
\end{align}
and
\begin{align}
R_0\|\nabla u\|_{L^2(A_{R_0,\,2R_0})}
+\|u\|_{L^2(A_{R_0,\,2R_0})}
\le Ce^{CA}\|u\|_{L^2(B_{3R_0})}.
\label{caccc-2}
\end{align}

As in the proof of the three-ball inequality, we use the
potential-absorbed Carleman estimate which gives the exponent
\(\mu(d,t)\), or \(\mu_\varepsilon(2,t)\) when \(d=2\). Thus, in the
higher-integrability range, we use either the power-weight estimate
with exponent \(d\)  or the logarithmic-weight
estimate, according to which gives the smaller exponent. In what
follows, \(\mu\) denotes the corresponding exponent.

Applying Proposition~\ref{CarlpqV} with $v=\psi u$, by   H\"older's inequality and the preceding Caccioppoli estimates (\ref{caccc-1}) and (\ref{caccc-2}), for
every non-resonant \(\tau\ge CA\), we have
\begin{multline}
R_0^{-d/2}\left(\frac{2R_0}{3}\right)^{-\tau}
\|u\|_{L^2(A_{R_0/2,2R_0/3})}
+r^{-d/2}(2r)^{-\tau}\|u\|_{L^2(A_{r,2r})}
\\
\le Ce^{CA}\left[
r^{-d/2}\left(\frac r2\right)^{-\tau}
\|u\|_{L^2(B_r)}
+R_0^{-d/2}R_0^{-\tau}
\|u\|_{L^2(B_{3R_0})}
\right].
\label{eq:power-weight-doubling}
\end{multline}

Similarly, we apply  Proposition~\ref{proposi-2}  to $v= \psi u$ in
\(B_{3R_0}\). Write
\(\varphi=\varphi_{3R_0}\). Since \(\psi u\) vanishes in \(B_{r/2}\),
the parameter \(\rho\) in Proposition~\ref{proposi-2} is
\(r/2\). On \(A_{r,2r}\),
\[
\left(\frac r2\right)^{1/2}|z-x|^{-1/2}\ge\frac12,
\]
so the last term on the right-hand side of \eqref{Carl:ln} controls
the \(L^2\)-norm on \(A_{r,2r}\) without a factor involving
\(|\ell_{3R_0}|^{-1}\). Note that the factors
involving \(|\ell_{3R_0}|\) are bounded above and below by positive
constants in annulus $A_{R_0/2,\,2R_0/3}$
. We therefore obtain, for every \(\tau\ge CA\),
\begin{multline}
R_0^{-d/2}e^{\tau\varphi(2R_0/3)}
\|u\|_{L^2(A_{R_0/2,\,2R_0/3})}
+r^{-d/2}e^{\tau\varphi(2r)}
\|u\|_{L^2(A_{r,\,2r})}
\\
\le Ce^{CA}\left[
r^{-d/2}e^{\tau\varphi(r/2)}
\|u\|_{L^2(B_r)}
+R_0^{-d/2}e^{\tau\varphi(R_0)}
\|u\|_{L^2(B_{3R_0})}
\right].
\label{eq:log-weight-doubling}
\end{multline}

We next estimate the quotient of the two fixed-scale norms. Choose
\(\rho_0=R_0/24\) and a point \(y\) such that
\[
B_{\rho_0}(y)\subset A_{R_0/2,2R_0/3}.
\]
Applying Lemma~\ref{lem:change-center} in \(B_8\) gives that
\[
\|u\|_{L^2(A_{R_0/2,2R_0/3})}
\ge
\|u\|_{L^2(B_{\rho_0}(y))}
\ge
e^{-C(A+N)}\|u\|_{L^2(B_4)}.
\]
Since \(B_{3R_0}(x)\subset B_4\), it follows that
\[
\log
\frac{\|u\|_{L^2(B_{3R_0})}}
{\|u\|_{L^2(A_{R_0/2,2R_0/3})}}
\le C(A+N).
\]

For a power weight case, we choose a non-resonant \(\tau\) such that
\(
0\le \tau-C(A+N)\le 1,
\)
where \(C\) is sufficiently large. Since
\[
\frac{R_0^{-\tau}}
{(2R_0/3)^{-\tau}}
=\left(\frac23\right)^\tau,
\]
the second term on the right-hand side of
\eqref{eq:power-weight-doubling} is absorbed by the first term on its
left-hand side. Thus we obtain
\begin{align}
\|u\|_{L^2(A_{r,2r})}
\le Ce^{CA}4^\tau\|u\|_{L^2(B_r)}
\le e^{C(A+N)}\|u\|_{L^2(B_r)}.
\label{ccc-1}
\end{align}

For the logarithmic-weight $\varphi$,  we have
\(
c_0=\varphi(2R_0/3)-\varphi(R_0)>0.
\)
The number \(c_0\) is independent of \(R_0\), since the family of
weights \(\varphi_R\) is compatible with dilations. Choosing
\(\tau=C(A+N)\) with \(C\) sufficiently large, we absorb the outer
term in \eqref{eq:log-weight-doubling}. It follows that
\[
\|u\|_{L^2(A_{r,2r})}
\le
Ce^{CA}
e^{\tau(\varphi(r/2)-\varphi(2r))}
\|u\|_{L^2(B_r)}.
\]
Moreover,
\[
\varphi(r/2)-\varphi(2r)
=\int_{r/2}^{2r}-\varphi'(s)\,ds
\le C\int_{r/2}^{2r}\frac{ds}{s}
\le C,
\]
uniformly for \(0<r<R_0/12\). Hence
\begin{align}
\|u\|_{L^2(A_{r,2r})}
\le e^{C(A+N)}\|u\|_{L^2(B_r)}.
\label{ccc-2}
\end{align}
Note that the constant $C$ in (\ref{ccc-1}) and \eqref{ccc-2} are independent of $r$.

Since
\[
\|u\|_{L^2(B_{2r})}
\le
\|u\|_{L^2(B_r)}
+\|u\|_{L^2(A_{r,2r})},
\]
we conclude from (\ref{ccc-1}) and (\ref{ccc-2})  that
\[
\|u\|_{L^2(B_{2r})}
\le e^{C(A+N)}\|u\|_{L^2(B_r)}
\]
for \(0<r<R_0/12\).

It remains to consider \(r\ge R_0/12\). Since
\(B_{\rho_0}(x)\subset B_r(x)\), Lemma~\ref{lem:change-center}
gives
\[
\|u\|_{L^2(B_r(x))}
\ge
\|u\|_{L^2(B_{\rho_0}(x))}
\ge
e^{-C(A+N)}\|u\|_{L^2(B_4)}.
\]
It holds that \(B_{2r}(x)\subset B_8\) for \(x\in B_2\) and \(r\le2\). 
Therefore
\[
\|u\|_{L^2(B_{2r}(x))}
\le
\|u\|_{L^2(B_8)}
=e^N\|u\|_{L^2(B_4)}.
\]
Combining the last two inequalities yields that
\[
\|u\|_{L^2(B_{2r}(x))}
\le
e^{C(A+N)}\|u\|_{L^2(B_r(x))}.
\]
This completes the proof.
\end{proof}

\section{Propagation of smallness from sets of positive measure}\label{s:Remez}
\label{Remez}
\subsection{Doubling indices for cubes}
In this section we prove a propagation of smallness estimate from sets of positive measure for solutions of \eqref{ePDE}.
The question of quantitative propagation of smallness for solutions of elliptic equations with lower-order terms was studied in \cite{MV12}, where a weaker estimate was obtained using Carleman inequalities of Koch and Tataru \cite{KT01}. In this work we consider the Schr\"odinger equations with singular potentials and give explicit estimates on the constants in our inequalities.

We adapt the strategy in \cite{LM18}, \cite{LM19}.  However, we work with complex-valued solutions and complex-valued, possibly singular potentials and use the results of \cite{Kato78} and  \cite{BK79} for the base of our induction argument.

Throughout this section \(d\ge2\), \(t>d/2\), \(u\) is a solution
of \eqref{ePDE} in a cube \(Q_0\), and \(V\in L^t(Q_0)\). If \(d\ge3\),
we write \(\mu=\mu(d,t)\), as defined in \eqref{mu3}. If \(d=2\),
we fix \(\varepsilon>0\) and write
\(\mu=\mu_\varepsilon(2,t)\), as defined in \eqref{mu2}. The solution
\(u\) and the potential \(V\) may be complex-valued.

If \(q\) is a cube, then \(2q\) denotes the cube with the same center
and twice the side length.  We denote the side length of \(q\) by
\(s(q)\). We will also use notation $Q_r(z)$ for a cube with side lengths $r$ and center at $z$.
For a cube \(Q\subset Q_0\), define
\[
\mathcal{A}_V(Q)=
\left(
1+s(Q)^{2-d/t}\|V\|_{L^t(Q)}
\right)^\mu.
\]
Also when \(2Q\subset Q_0\), define
\[
N_u(Q)=\log\frac{\sup_{2Q}|u|}{\sup_Q|u|}.
\]

We first compare the \(L^2\)- and \(L^\infty\)-norms. The elementary
inequality
\[
r^{-d/2}\|u\|_{L^2(B_r(z))}
\le C\|u\|_{L^\infty(B_r(z))}
\]
and Lemma~\ref{lem:L2-Linfty-local} imply that, whenever
\(B_{2r}(z)\subset Q_0\),
\begin{equation}
c r^{-d/2}\|u\|_{L^2(B_r(z))}
\le \|u\|_{L^\infty(B_r(z))}
\le
C e^{C A_V(B_{2r}(z))}
r^{-d/2}\|u\|_{L^2(B_{2r}(z))}.
\label{eq:L2-Linfty-comparison}
\end{equation}
Indeed, we use the fact that the polynomial factor in Lemma~\ref{lem:L2-Linfty-local}
is bounded by \(Ce^{C A_V(B_{2r}(z))}\).

We next compare the fixed-scale \(L^2\)-growth with the doubling
index of a cube. Let \(Q=Q_R(x)\). Assume that
\(\Lambda_dQ\subset Q_0\), where \(\Lambda_d>1\) is a sufficiently large dimensional constant. We denote
\[
A=\mathcal{A}_V(\Lambda_dQ),\qquad N=N_u(Q).
\]
There exists a dimensional constant \(\rho_d>0\) such that, for
\(\rho=\rho_dR\) and every \(z\in Q\),
\begin{equation}
\log
\frac{\|u\|_{L^2(B_{8\rho}(z))}}
{\|u\|_{L^2(B_{4\rho}(z))}}
\le C(A+N).
\label{eq:fixed-scale-L2-growth}
\end{equation}

To see this, choose \(z_0\in Q\) such that
\[
|u(z_0)|\ge\frac12\sup_Q|u|.
\]
Taking \(\rho_d\) to be small, all the balls used below are
contained in \(2Q\). By \eqref{eq:L2-Linfty-comparison},
\[
\|u\|_{L^2(B_{2\rho}(z_0))}
\ge c e^{-CA}R^{d/2}\sup_Q|u|.
\]
We connect \(z_0\) to \(z\) by a chain of points
\(z_0,z_1,\ldots,z_m=z\) contained in \(Q\), where
\(|z_{j+1}-z_j|\le\rho\) and \(m\le C_d\). Applying Lemma~\ref{lem:three-ball} at \(z_j\), with radii
\(\rho<2\rho<8\rho\), and using
\(B_\rho(z_j)\subset B_{2\rho}(z_{j+1})\), we obtain
\[
\|u\|_{L^2(B_{2\rho}(z_j))}
\le
Ce^{CA}
\|u\|_{L^2(B_{2\rho}(z_{j+1}))}^{\alpha}
\left(R^{d/2}\sup_{2Q}|u|\right)^{1-\alpha},
\]
where \(\alpha\in(0,1)\) depends only on the dimension. Iterating
along the chain and using
\(\sup_{2Q}|u|=e^N\sup_Q|u|\), we get
\[
\|u\|_{L^2(B_{4\rho}(z))}
\ge
e^{-C(A+N)}R^{d/2}\sup_Q|u|.
\]
On the other hand,
\[
\|u\|_{L^2(B_{8\rho}(z))}
\le CR^{d/2}\sup_{2Q}|u|.
\]
This proves \eqref{eq:fixed-scale-L2-growth}.

Applying Theorem~\ref{th-1} in \(B_{8\rho}(z)\), and using
\eqref{eq:fixed-scale-L2-growth}, we conclude that
\begin{equation}
\log
\frac{\|u\|_{L^2(B_{2s}(z))}}
{\|u\|_{L^2(B_s(z))}}
\le C(A+N),
\qquad 0<s\le2\rho.
\label{eq:small-scale-L2-doubling}
\end{equation}

We now pass from balls back to cubes. Since
\(
B_{r/2}(z)\subset Q_r(z)
\) and
\(2Q_r(z)\subset B_{\sqrt d\,r}(z),
\)
the comparison \eqref{eq:L2-Linfty-comparison} gives
\begin{equation}
N_u(Q_r(z))
\le
CA+
\log
\frac{\|u\|_{L^2(B_{2\sqrt d\,r}(z))}}
{\|u\|_{L^2(B_{r/2}(z))}}.
\label{eq:cube-ball-doubling-comparison}
\end{equation}
If \(r\le c_dR\), where \(c_d>0\) is  small, the
quotient on the right-hand side is a product of a fixed number,
depending only on \(d\), of doubling quotients covered by
\eqref{eq:small-scale-L2-doubling}. Consequently,
\begin{equation}
N_u(Q_r(z))
\le C\bigl(N_u(Q)+\mathcal{A}_V(\Lambda_d Q)\bigr)
\label{eq:deep-cube-comparison}
\end{equation}
for every \(z\in Q\) and every \(0<r\le c_dR\).

The results of the previous sections imply the following almost monotonicity lemmas for the doubling indices. 

\begin{lemma}
\label{lem:deep-inside-comparison}
There exists \(\eta=\eta(d)\in(0,1)\), constants \(C_1>0\) and \(A_1>0\), depending only on
\(d,t\), such that if \(q(y)\) and \(Q(x)\) are cubes satisfying
\(
    q(y)\subset \eta Q(x),\)
    \(
    \Lambda_d Q(x)\subset Q_0,
\)
then
\[
    N_u(q(y))\le C_1N_u(Q(x))+A_1\mathcal{A}_V(\Lambda_dQ(x)).
\]
\end{lemma}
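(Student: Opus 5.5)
The lemma will follow from \eqref{eq:deep-cube-comparison} once $\eta$ is small enough that every small cube inside $\eta Q(x)$ is of the form $Q_r(z)$ with $z\in Q(x)$ and $r\le c_dR$, where $R=s(Q(x))$. The only other case is a cube $q(y)$ that is comparable in size to $\eta Q(x)$, which is handled by a direct comparison of suprema.

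\textbf{Choice of $\eta$ and the case of small cubes.} Write $q(y)=Q_r(y)$. Since $q(y)\subset\eta Q(x)$, we have $y\in Q(x)$ and $r\le \eta R$. Choose $\eta\le c_d$, where $c_d$ is the dimensional constant in \eqref{eq:deep-cube-comparison}. Then $0<r\le c_dR$ and $y\in Q(x)$, so \eqref{eq:deep-cube-comparison} gives directly
\[
N_u(q(y))\le C\bigl(N_u(Q(x))+\mathcal{A}_V(\Lambda_dQ(x))\bigr).
\]
We can take $C_1=A_1=C$ here. The constant $C$ depends only on $d,t$, and on $\varepsilon$ when $d=2$. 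It is independent of $r$ and of the position of $y$, because \eqref{eq:small-scale-L2-doubling} holds uniformly for $0<s\le 2\rho$ and for all centers $z\in Q$. This uniformity is exactly what Theorem~\ref{th-1} supplies.

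\textbf{What must be checked.} The inputs to \eqref{eq:deep-cube-comparison} must all be legitimate.
\begin{enumerate}
\item The balls $B_{2\sqrt d\,r}(y)$ and $B_{r/2}(y)$ in \eqref{eq:cube-ball-doubling-comparison} must lie inside the range $s\le 2\rho=2\rho_dR$ of \eqref{eq:small-scale-L2-doubling}. This holds once $2\sqrt d\,r\le 2\rho_dR$, i.e.\ after shrinking $\eta$ further so that $\eta\le\rho_d/\sqrt d$.
\item The quotient $\|u\|_{L^2(B_{2\sqrt d\,r})}/\|u\|_{L^2(B_{r/2})}$ telescopes into about $\log_2(4\sqrt d)+1$ doubling quotients. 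Each is bounded by $C(A+N)$, so the number of factors is a fixed dimensional number.
\item The potential term produced by \eqref{eq:L2-Linfty-comparison} on the ball $B_{4\sqrt d\,r}(y)$ must be controlled by $\mathcal{A}_V(\Lambda_dQ(x))$. This is the scale monotonicity of the normalized potential: for a subset $B\subset\Lambda_dQ$,
\[
s(B)^{2-d/t}\|V\|_{L^t(B)}\le C\,s(\Lambda_dQ)^{2-d/t}\|V\|_{L^t(\Lambda_dQ)},
\]
which holds since $2-d/t>0$. The same monotonicity is needed for the ball $B_{8\rho}(z)$ where Theorem~\ref{th-1} is applied, and there it uses $8\rho\ll R$, so that $B_{8\rho}(z)\subset\Lambda_dQ(x)$.
\end{enumerate}

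\textbf{Main obstacle: comparable cubes.} The step I expect to need care is the regime of cubes $q(y)$ that are not much smaller than $Q(x)$ but still lie in $\eta Q(x)$. If one prefers to take $\eta$ independent of $c_d$, this regime has to be handled separately. For $r\ge c_dR$, I would bound $N_u(q(y))$ as follows. The numerator satisfies $\sup_{2q}|u|\le \sup_{2Q}|u|$, since $2q\subset 2\eta Q\subset Q$. For the denominator, $\sup_q|u|\ge \sup_{Q_{c_dR}(y)}|u|$. Iterating \eqref{eq:deep-cube-comparison} on a bounded number of cubes of size $c_dR$ that connect $y$ to a point where $|u|$ nearly attains $\sup_Q|u|$ gives $\sup_{Q_{c_dR}(y)}|u|\ge e^{-C(N+A)}\sup_Q|u|$. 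This chain-of-cubes argument parallels Lemma~\ref{lem:change-center}, and combining the two bounds gives the claim with the same $C_1$ and $A_1$. Simply taking $\eta\le\min(c_d,\rho_d/\sqrt d)$ avoids this regime entirely, and that is the route I would take.
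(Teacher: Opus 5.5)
Your proposal is correct and follows the paper, which gives no separate proof of this lemma and presents it as a direct consequence of \eqref{eq:deep-cube-comparison}. The argument is to choose the dimensional constant $\eta\le c_d$, so that $q(y)\subset\eta Q(x)$ forces $y\in Q(x)$ and $s(q)\le c_d\,s(Q)$, which yields the bound with $C_1=A_1=C$; your detour through comparable cubes is, as you observe, unnecessary.
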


We define the maximal doubling index in a cube $Q$ with $2Q\subset Q_0$, by  
\begin{align}
\mathcal{N}_u(Q)=\sup_{q\subset Q} N_u(q).
    \label{eq:mondouble}
\end{align}
From its definition, it is clear that   $\mathcal{N}_u(q)\le \mathcal{N}_u(Q)$ for $q\subset Q$. 

\begin{corollary}
\label{cor:maximal-index-control}
Let \(\eta\) and $C_1$ be the constants in Lemma~\ref{lem:deep-inside-comparison}.
Define $\widehat{\Lambda}_d=\eta^{-1}\Lambda_d$  and suppose that
\(
    \widehat{\Lambda}_d Q\subset Q_{0},
\)
then there exists $A_1$ depending only on $t,d$ such that
\[
    \mathcal N_u(Q)\le C_1N_u(\eta^{-1} Q)+A_1\mathcal{A}_V(\widehat{\Lambda}_dQ).
\]
In particular, \(\mathcal N_u(Q)<\infty\).
\end{corollary}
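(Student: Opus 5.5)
The plan is to apply Lemma~\ref{lem:deep-inside-comparison} directly, with the big cube taken to be $\eta^{-1}Q$. Set $\widetilde Q=\eta^{-1}Q$, the cube with the same center as $Q$ and side length $\eta^{-1}s(Q)$. Then $\Lambda_d\widetilde Q=\eta^{-1}\Lambda_d Q=\widehat{\Lambda}_dQ\subset Q_0$ by hypothesis, so $\widetilde Q$ meets the containment assumption of the lemma. Also $\eta\widetilde Q=Q$, and since $\eta<1$ we get $2Q\subset \widehat\Lambda_dQ\subset Q_0$, because $\widehat\Lambda_d=\eta^{-1}\Lambda_d>2$ once $\Lambda_d\ge 2$. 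Hence $\mathcal N_u(Q)$ is well defined and $N_u(q)$ makes sense for every subcube $q\subset Q$.

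Next, fix an arbitrary cube $q=q(y)\subset Q=\eta\widetilde Q$. Lemma~\ref{lem:deep-inside-comparison}, applied to the pair $q(y)\subset\eta\widetilde Q$, gives
\[
N_u(q)\le C_1N_u(\widetilde Q)+A_1\mathcal A_V(\Lambda_d\widetilde Q)=C_1N_u(\eta^{-1}Q)+A_1\mathcal A_V(\widehat\Lambda_dQ).
\]
The right-hand side does not depend on $q$. Taking the supremum over all $q\subset Q$ in the definition \eqref{eq:mondouble} therefore yields the stated bound on $\mathcal N_u(Q)$, with the same $C_1$ and $A_1$.

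For finiteness, the right-hand side must be finite. Since $V\in L^t(Q_0)$, the quantity $\mathcal A_V(\widehat\Lambda_dQ)$ is finite. The index $N_u(\eta^{-1}Q)=\log\bigl(\sup_{2\eta^{-1}Q}|u|/\sup_{\eta^{-1}Q}|u|\bigr)$ is finite for two reasons:
\begin{itemize}
\item[(a)] $u$ is continuous (Hölder) and bounded on the compact set $2\eta^{-1}Q\subset\widehat\Lambda_dQ\subset Q_0$, so the numerator is finite; this uses $2\eta^{-1}\le\eta^{-1}\Lambda_d$.
\item[(b)] $u$ does not vanish identically on the open cube $\eta^{-1}Q$, so the denominator is positive. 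This follows from unique continuation for nontrivial solutions, e.g.\ from Lemma~\ref{lem:change-center} or Theorem~\ref{th-1}, which force positive $L^2$ mass on every ball.
\end{itemize}

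The only delicate point is bookkeeping. One has to check that all the containments $2\eta^{-1}Q\subset Q_0$ and $\Lambda_d\eta^{-1}Q\subset Q_0$ follow from $\widehat\Lambda_dQ\subset Q_0$, and that nontriviality of $u$ is in force, as it is implicitly throughout this section. No new estimate is needed.
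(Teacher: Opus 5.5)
Your proof is correct and is the paper's argument: apply Lemma~\ref{lem:deep-inside-comparison} to the pair $q\subset \eta(\eta^{-1}Q)$, using $\Lambda_d\eta^{-1}Q=\widehat\Lambda_d Q\subset Q_0$, then take the supremum over $q\subset Q$. Your bookkeeping is a bit more explicit than the paper's: you identify the potential term exactly as $\mathcal A_V(\widehat\Lambda_dQ)$, and you justify finiteness through continuity of $u$ on $\overline{2\eta^{-1}Q}$ and nontriviality of $u$, a point the paper leaves unargued.
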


\begin{proof}
Let \(q\subset Q\). Our aim is to prove a bound for
\(N_u(q)\) independent of \(q\).
By assumption, 
\(
    \Lambda_d \eta^{-1}Q\subset Q_{0}\).
We apply Lemma~\ref{lem:deep-inside-comparison} to the cubes $q$ and $\eta^{-1}Q$ to obtain that
\[
    N_u(q)\le C_1N_u(\eta^{-1} Q)+A_1(1+\eta^{d/t-2}s(Q)^{2-d/t}\|V\|_{L^t(\widehat{\Lambda}_dQ)})^\mu.
\]
Taking the supremum over all \(q\subset Q\) 
 proves the result.
\end{proof}

\subsection{Reformulation and induction bases}
	To prove the propagation of smallness estimate, we will establish a Remez type inequality as in \cite{LM18}. First we set $\widetilde{\Lambda}_d=2\widehat{\Lambda}_d$ and define a modified doubling index which also takes the norm of the potential into account as
\[\mathcal{M}_u(Q) =\mathcal{N}_u(2Q)+\mathcal{A}_V(\widetilde\Lambda_dQ).\]
Since $t>d/2$, it is easy to see that for $q\subset Q$,
\(\mathcal{M}_u(q)\le \mathcal{M}_u(Q).\)
Corollary \ref{cor:maximal-index-control} implies that 
\begin{equation}
    \label{max-doubling-doubling}
\mathcal{M}_u(Q)\le C\left(N_u(2\eta^{-1}Q)+\mathcal A_V(\widetilde{\Lambda}_dQ)\right).\end{equation}

	\begin{proposition}
	 \label{prop:R}  Let $u$ be a solution of (\ref{ePDE}) in $Q_0$. Suppose that $\widetilde\Lambda_dQ\subset Q_0$ and denote 
		\(E_a(u)=\{x\in Q: |u(x)|<e^{-a}\sup_Q|u|\}.\)
		Then 
		\begin{equation}\label{eq:RPDE}
			|E_a(u)|\le Ce^{-\frac{\gamma a} {{\mathcal{M}}_u(Q)}}|Q|
		\end{equation}
		for some positive constants $C$ and $\gamma$ that depend on $d$ and $t$  only. 
        \label{pro-4}
	\end{proposition}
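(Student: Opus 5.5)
The plan is to follow the multiscale induction of \cite{LM18} and induct on the size of the modified index $\mathcal M_u(Q)$, with the potential term $\mathcal A_V$ entering through the monotonicity $\mathcal M_u(q)\le\mathcal M_u(Q)$. Its extra decay under subdivision is what makes the induction close. Normalize $\sup_Q|u|=1$ and write $M=\mathcal M_u(Q)$. Since \eqref{eq:RPDE} is trivial for $a\le a_0M$ once $C\ge e^{\gamma a_0}$, only $a\gg M$ matters. I would formulate the claim as a bound $|E_a(u)|\le F(a,M)|Q|$ and prove by induction on $k$ that it holds, with uniform constants, whenever $M\le 2^kN_0$.

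\emph{Base case ($M\le N_0$).} Here the doubling index and the scale-normalized potential on every subcube of $2Q$ are bounded by $N_0$. I would iterate the following one-step estimate. Split $Q$ into $K^d$ equal subcubes, with $K=K(d,t,N_0)$ fixed. By \eqref{eq:L2-Linfty-comparison}, \eqref{eq:deep-cube-comparison} and the $C^{0,\alpha}$ oscillation estimates of the Appendix (from \cite{BK79}), each subcube $q$ satisfies $\sup_q|u|\ge e^{-C N_0\log K}\sup_Q|u|$. Moreover, a fixed proportion $\theta>0$ of $q$ (a small concentric cube around a near-maximum point) carries $|u|\ge\frac12\sup_q|u|$.

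Hence $E_a(u)\cap q\subset E_{a-CN_0\log K}(u|_q)$, while $E_a(u)$ misses a $\theta$-fraction of each $q$. Iterating $a/(CN_0\log K)$ times gives $|E_a|\le(1-\theta)^{ca/N_0}|Q|$, which is \eqref{eq:RPDE} with $\gamma\sim N_0^{-1}\cdot$const, hence also with $\gamma/M$ after adjusting constants.

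\emph{Induction step.} Let $N_0 2^k<M\le N_02^{k+1}$ and subdivide $Q$ into $K^d$ subcubes, $K$ a large dimensional constant. The central input is the counting lemma of \cite{LM18} (the ``simplex/hyperplane'' lemma), which I would rerun using only three-ball propagation (Lemma~\ref{lem:three-ball}, Lemma~\ref{lem:change-center}) together with the almost monotonicity of Lemma~\ref{lem:deep-inside-comparison}. It should show that all but at most $K^{d-1-c}$ subcubes $q$ satisfy $\mathcal N_u(2q)\le M/2$.

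For the potential part, scaling gives
\[
\mathcal A_V(\widetilde\Lambda_d q)\le\bigl(1+K^{-(2-d/t)}s(Q)^{2-d/t}\|V\|_{L^t(\widetilde\Lambda_d Q)}\bigr)^\mu .
\]
This is below $M/4$ for $K$ large unless $M$ is bounded, which is the base case. So good cubes have $\mathcal M_u(q)\le M/2$ (up to a harmless constant). On every subcube, doubling gives $\sup_q|u|\ge e^{-CM\log K}$. Therefore
\[
E_a(u)\cap q\subset E_{a-CM\log K}(u|_q).
\]
The induction hypothesis on good cubes contributes $Ce^{-\gamma(a-CM\log K)/(M/2)}|q|$. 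On bad cubes I would recurse on the same statement with the same $M$ but smaller cubes, following the recursive inequality of \cite{LM18}:
\[
F(a,M)\le F_{\rm good}+K^{-1-c}F(a-CM\log K,M).
\]
Solving this recursion gives $F(a,M)\le Ce^{-\gamma a/M}$ provided $\gamma$ is small relative to $c/\log K$.

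The main obstacle is the counting lemma for bad subcubes. The harmonic proof uses almost monotonicity of the doubling index across cubes at a comparable scale, plus the fact that a large index along many cubes near a hyperplane forces super-exponential growth. Here those facts must be replaced by Lemma~\ref{lem:deep-inside-comparison}, whose additive error $A_1\mathcal A_V$ has to stay below $M/2$. This is exactly why $\mathcal A_V$ is built into $\mathcal M_u$. I would first try to check that all losses in the lemma are of the form $C_1N+A_1\mathcal A_V$ with $\mathcal A_V\le M/K^{c}$, so that they are absorbed for $K$ large.
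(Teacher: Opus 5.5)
Your base case is fine (it is a mild variant of Lemma~\ref{lem:bounded-index-base-case}), and your handling of the potential term under subdivision matches the paper. The gap is in the induction step. It rests entirely on the counting lemma (all but $K^{d-1-c}$ subcubes satisfy $\mathcal N_u(2q)\le M/2$), which you do not prove and which does not follow from the tools available here.

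In the harmonic setting that lemma comes from the simplex lemma, whose conclusion is a \emph{multiplicative} gain $N(x_0,Kr)\ge(1+c)N$. Its proof needs almost monotonicity of the doubling index with multiplicative constant $1+\epsilon$, which is supplied by the frequency function. Here the only monotonicity is Lemma~\ref{lem:deep-inside-comparison} (or Theorem~\ref{th-1}): $N_u(q)\le C_1N_u(Q)+A_1\mathcal A_V(\Lambda_dQ)$ with a fixed, possibly large, $C_1$. A fixed loss $C_1$ wipes out a fixed gain $1+c$. So the obstacle is not what you identify. Absorbing the additive term $A_1\mathcal A_V$ is the easy part. The multiplicative constant is what breaks the argument, so checking that all losses have the form $C_1N+A_1\mathcal A_V$ would not rescue it.

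The missing idea is that for sets of positive Lebesgue measure you do not need the counting lemma at all. One good subcube among the $K^d$ suffices; this is Lemma~\ref{lem:good-subcube-modified}, and it follows from a chain argument:
\begin{itemize}
\item Let $N_{\min}$ be the smallest doubling index among the cubes $p=2\eta^{-1}q$. Start at a maximum point of $|u|$ on $\frac12Q$. You can take $cK$ steps of length at most $Cs(Q)/K$ inside $Q$, each multiplying $|u|$ by $e^{N_{\min}}$. Hence $N_{\min}\le \frac CK\mathcal N_u(Q)$.
\item Corollary~\ref{cor:maximal-index-control}, together with the scaling $\mathcal A_V(\widetilde\Lambda_dq_0)\le C(1+K^{-\sigma}\mathcal A_V(\widetilde\Lambda_dQ))$, then gives $\mathcal M_u(q_0)\le\widetilde{\mathcal M}/2$.
\end{itemize}
Here the gain is a factor $1/K$, which absorbs $C_1$ once $K$ is large.

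The resulting recursion is $F(\widetilde{\mathcal M},a)\le K^{-d}F(\widetilde{\mathcal M}/2,a-a_0\widetilde{\mathcal M})+(1-K^{-d})F(\widetilde{\mathcal M},a-a_0\widetilde{\mathcal M})$, and it still closes:
\begin{itemize}
\item Choose $\gamma$ so that $(1-K^{-d})e^{\gamma a_0}\le 1-K^{-d}/2$.
\item When $a/\widetilde{\mathcal M}\ge 2a_0+\gamma^{-1}\log 2$, use the induction hypotheses at $\widetilde{\mathcal M}/2$ and at $a-a_0\widetilde{\mathcal M}$.
\item For smaller $a/\widetilde{\mathcal M}$, take $C\ge 2e^{2\gamma a_0}$, so the bound is trivial.
\end{itemize}
The sharper count of bad cubes is what one would need for smallness sets of positive $(d-1+\delta)$-dimensional Hausdorff content. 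It is not needed for sets of positive measure.
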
 
	
In order to prove the above proposition, we apply the double induction argument which will be explained in this and the next subsections. First of all, we need to verify the base cases of induction  in $a$ and ${\mathcal{M}}=\mathcal{M}_u(Q)$. We consider 
	two cases  $a\le c_0 {\mathcal{M}}$ and ${\mathcal{M}}\le N_0$ for some $N_0$ large enough.

    If $\frac{a}{ {\mathcal{M}}} <c_0$, by choosing $C=C(\gamma)$ large enough, we get
\(Ce^{-\gamma \frac{a}{{\mathcal{M}}}}\ge Ce^{-\gamma c_0}\ge 1.
    \)
    Thus,  the inequality (\ref{eq:RPDE}) holds trivially.

	Now we want to show that (\ref{eq:RPDE}) holds for some $\gamma$ and $C$ if we assume that ${\mathcal{M}_u(Q)}\leq N_0$. We apply  the oscillation  Lemma \ref{lem:complex-oscillation} in Appendix. 
    
	\begin{lemma}
\label{lem:bounded-index-base-case}
 Assume that $u$ satisfies (\ref{ePDE}) in $Q_0$, $\widetilde\Lambda_dQ\subset Q_0$ and $\mathcal{M}_u(Q) \leq N_0$. Let 
		$E_a=\{x\in Q: |u(x)|<e^{-a}\sup_Q|u|\}.$ 
		Then there exist constants \(C>0\) and \(\gamma>0\), depending only on
\(N_0,d,t\), such that
\[
    |E_a(u)|\le C e^{-\gamma a}|Q|.
\]
\end{lemma}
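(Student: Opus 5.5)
We prove the base case by a self-improving argument that combines bounded doubling with an oscillation (Harnack-type) estimate; we never need the fine structure of $u$. We normalize $\sup_Q|u|=1$. The hypothesis $\mathcal M_u(Q)\le N_0$ gives two facts at once. First, $\mathcal A_V(\widetilde\Lambda_d Q)\le N_0$, so the scale-invariant potential norm $s(q)^{2-d/t}\|V\|_{L^t(q)}$ is bounded by $N_0^{1/\mu}$ for every subcube $q\subset 2Q$. Since $t>d/2$, it is in fact bounded by $N_0^{1/\mu}(s(q)/s(Q))^{2-d/t}$, which decays in the scale. Second, $N_u(q)\le N_0$ for every cube $q\subset 2Q$, so $\sup_{2q}|u|\le e^{N_0}\sup_q|u|$ uniformly.

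The key step is a one-scale estimate, which I would take from the oscillation Lemma~\ref{lem:complex-oscillation} in the Appendix. Concretely, there is $\theta=\theta(N_0,d,t)\in(0,1)$ such that for every subcube $q\subset Q$ (cut from the dyadic grid of $Q$),
\[
\bigl|\{x\in q:\ |u(x)|<\theta\sup_q|u|\}\bigr|\le (1-\theta)|q| .
\]
To get it, set $m=\sup_q|u|$ and pick $z\in \bar q$ with $|u(z)|\ge m/2$. The oscillation lemma, applied on a ball around $z$ of radius $c\,s(q)$ contained in $2q$, bounds $\operatorname{osc}_{B_\delta(z)}u$ by $C(\delta/s(q))^{\alpha}\sup_{2q}|u|$. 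Here $C$ depends only on the bounded scale-invariant potential norm, and $\sup_{2q}|u|\le e^{N_0}m$. Choosing $\delta=c(N_0)s(q)$ makes the oscillation at most $m/4$, so $|u|\ge m/4$ on $B_\delta(z)\cap q$. That set has measure at least $c'(N_0)|q|$, which proves the one-scale estimate. I expect the main obstacle to be this step: we must make sure the oscillation lemma holds for complex $u$ and $V$ with constants depending only on the normalized $L^t$ norm. This is exactly what the appendix following \cite{BK79} provides. We also need $z$ to sit well inside $2q$, which holds because $z\in\bar q$.

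We then iterate as in \cite{LM18}, with a Calderón--Zygmund/dyadic argument. Consider a dyadic cube $q$ on which $\sup_q|u|\ge \theta^{k}$. By the one-scale estimate, a fixed fraction of $q$, measured through its children, has $\sup|u|\ge\theta^{k+1}$. Doubling is what keeps the children's suprema comparable: for a child $q'$ we have $\sup_{q'}|u|\ge e^{-C N_0}\sup_q|u|$ by applying the bounded index a bounded number of times. Define $F_k$ as the set of points whose dyadic cubes of generation $k$ all have $\sup|u|<\theta^{C k}$. Induction on the generation then gives $|F_k|\le(1-c)^k|Q|$. Each point of $E_a$ with $e^{-a}\le\theta^{Ck}$ lies in $F_k$ for $k\approx a/(C\log(1/\theta))$. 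Indeed, by continuity (Hölder regularity) each such point lies in small cubes where $|u|$ is small, and the one-scale estimate applied at every generation ensures the measure decays geometrically. This yields
\[
|E_a(u)|\le C e^{-\gamma a}|Q|,\qquad \gamma=c/\bigl(C\log(1/\theta)\bigr),
\]
with $C,\gamma$ depending only on $N_0,d,t$. The case of small $a$ is trivial after enlarging $C$.
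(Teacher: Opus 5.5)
Your two ingredients are correct. The first is the one-scale density estimate: the oscillation lemma on a ball of radius comparable to $s(q)$ inside $2q$, centred at a near-maximum point, combined with $\sup_{2q}|u|\le e^{N_0}\sup_q|u|$. The second is child comparability, $\sup_{q'}|u|\ge e^{-2N_0}\sup_q|u|$, which follows from $q\subset 4q'$ and $N_u(q'),N_u(2q')\le N_0$, since both cubes lie in $2Q$. This is a genuinely different route from the paper. The paper extracts a \emph{whole} subcube $q_0$ of a fixed $K^d$-partition with $\inf_{q_0}|u|\ge b_0\sup|u|$. It then proves $\sup_q|u|\ge b_1\sup|u|$ for every subcube by a chain of adjacent cubes, and runs the recursion $F(a)\le (1-K^{-d})F(a-\log(1/b_1))$ over a scale-invariant class, the same template as the induction step. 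Your version needs only a positive-density good set and refers everything to the fixed cube $Q$. So you never need monotonicity of $\mathcal M_u$ under passage to subcubes.

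The iteration as you wrote it, however, does not work. A point $x$ with $|u(x)|$ tiny can lie in a generation-$k$ cube whose supremum is large. In fact, by your child comparability every generation-$k$ cube has $\sup|u|\ge e^{-2N_0k}$. So for $\theta^C<e^{-2N_0}$ your $F_k$ is empty, and $E_a\subset F_k$ fails. Likewise, saying that a fixed fraction of $q$ has children with $\sup|u|\ge\theta^{k+1}$ carries no information, since child comparability already gives that. What the one-scale estimate supplies is a \emph{pointwise} lower bound on a set of proportion $\theta$. It must be applied on stopping cubes whose supremum is pinned to the current level.

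Concretely, put $\lambda_k=(\theta e^{-2N_0})^k$, $F_0=Q$, and let $F_k$ be the union of the maximal dyadic cubes $q\subsetneq Q$ with $\sup_q|u|<\lambda_k$. For such a maximal $q$ the parent has $\sup|u|\ge\lambda_k$, so $\sup_q|u|\ge e^{-2N_0}\lambda_k>\lambda_{k+1}$. Hence every maximal cube of $F_{k+1}$ meeting $q$ lies strictly inside $q$, and $F_{k+1}\cap q\subset\{x\in q:\ |u(x)|<\lambda_{k+1}\}\subset\{x\in q:\ |u(x)|<\theta\sup_q|u|\}$. Summing over the disjoint maximal cubes gives $|F_{k+1}|\le(1-\theta)|F_k|$. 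Continuity of $u$ gives $E_a\subset F_k$ up to a null set whenever $\lambda_k\ge e^{-a}$. This yields the lemma with $\gamma=\log\frac{1}{1-\theta}\big/\bigl(2N_0+\log\frac1\theta\bigr)$.

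Alternatively, note that your set $B_\delta(z)\cap q$ contains a dyadic subcube of $q$ of a fixed generation $\kappa=\kappa(N_0,d,t)$. Together with child comparability iterated $\kappa$ times, this is exactly the paper's claim with $K=2^\kappa$, and the paper's recursion then finishes the proof.
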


\begin{proof}
We first prove the following scale-invariant claim, following \cite{LM18}.
{\it There exist an integer \(K\ge2\) and constants \(b_0,b_1\in(0,1)\), depending only on
\(N_0,d,t\), with the following property. Let \(Q_1\subset Q\) be a cube such that
\(
    \mathcal M_u(Q_1)\le N_0,\)
    \(
    m=\sup_{Q_1} |u|>0.
\)
Partition \(Q_1\) into \(K^d\) equal subcubes. Then
there is one subcube $q_0\subset Q_1$
    such that 
    $\inf_{q_0}|u|\ge b_0m,$
and every subcube \(q\subset Q_1\) in the partition satisfies
\(
    \sup_q |u|\ge b_1m.
\)
}

We prove the claim. The definition of the  doubling index gives
\[
    N_u\left(\frac12 Q_1\right)
    =
    \log\frac{\sup_{Q_1}|u|}{\sup_{\frac12 Q_1}|u|}
    \le
    \mathcal N_u(Q_1)
    \le N_0.
\]
Hence
\[
    \sup_{\frac12 Q_1}|u|\ge e^{-N_0}m.
\]
Choose \(x_0\in \frac12 Q_1\) such that
\[
    |u(x_0)|\ge e^{-N_0}m.
\]
Let \(q_0\) be the subcube of the partition containing \(x_0\).
Since \(x_0\in \frac12 Q_1\), a ball with radius $r=c_ds(Q_1)$ and
center \(x_0\) is contained in \(Q_1\). Applying Lemma \ref{lem:complex-oscillation}
 to this ball, we obtain
\[
    \operatorname{osc}^{\mathbb C}_{q_0}u
    \le
    C(d,t,N_0)K^{-\alpha}m.
\]
Choosing \(K\) sufficiently
large, depending only on \(N_0,d,t\), we may assume
\[
    C(d,t,N_0)K^{-\alpha}\le \frac12 e^{-N_0}.
\]
Therefore, for every \(x\in q_0\),
\[
    |u(x)|
    \ge
    |u(x_0)|-|u(x)-u(x_0)|
    \ge
    \frac12 e^{-N_0}m.
\]
Thus the first assertion of the claim holds with
\(b_0=\frac12 e^{-N_0}.\)

It remains to prove the lower bound on \(\sup_q |u|\) for every subcube \(q\) of the
partition. Let
\( h=K^{-1}{s(Q_1)}
\)
be the side length of the subcubes in the partition. Choose a point \(y\in Q_1\) such that
\(
    |u(y)|\ge \frac{m}{2},
\)
and let \(q_\ast\) be a subcube of the partition which contains \(y\). 

Fix now an arbitrary subcube \(q\) of the partition. There exists a chain of subcubes of the partition
\(
    q=q^{(0)},q^{(1)},\ldots,q^{(J)}=q_\ast
\)
such that consecutive cubes \(q^{(j)}\) and \(q^{(j+1)}\) share a face, and
\(    J\le d(K-1).
\)

We claim that for every \(j=0,\ldots,J-1\),
\[
    \sup_{q^{(j+1)}}|u|
    \le
    e^{2N_0}\sup_{q^{(j)}}|u|.
\]
Indeed, since \(q^{(j)}\) and \(q^{(j+1)}\) are adjacent equal cubes, we have
\(
    q^{(j+1)}\subset 4q^{(j)}.
\)
Moreover, since \(4q^{(j)}\subset 2Q_1\), and
\(\mathcal N_u(2Q_1)\le N_0\), we have
\(
    N_u(q^{(j)})\le N_0,\ N_u(2q^{(j)})\le N_0.
\)
Therefore
\[
    \sup_{q^{(j+1)}}|u|
    \le
    \sup_{4q^{(j)}}|u|
    \le
    e^{2N_0}\sup_{q^{(j)}}|u|
.
\]
Iterating along the chain gives
\[
    \frac{m}2 
    \le
    \sup_{q_\ast}|u|
    \le
    e^{2JN_0}\sup_q |u|
    \le
    e^{2d(K-1)N_0}\sup_q |u|.
\]
Hence
\[
    \sup_q |u|\ge b_1 m,
    \qquad
    b_1=\frac12 e^{-2d(K-1)N_0}.
\]
This proves the second assertion of the claim.
We now apply the claim iteratively  to complete the proof, a similar argument can be found in as in \cite{LM18}.


\end{proof}

\subsection{Induction step}
 The induction will be carried out with
respect to the modified doubling index \(\mathcal M_u\), rather than the maximal doubling
index alone. Recall that \(\mathcal M_u\) is defined by
\(
    \mathcal M_u(Q)
    =
    \mathcal{N}_u(2Q)+\mathcal{A}_V(\widetilde\Lambda_dQ).
\)

We prove  the following good-subcube lemma.

\begin{lemma}
\label{lem:good-subcube-modified}
There exist a dyadic integer \(K=2^\kappa\ge2\), constants \(N_0>1\) and
\(a_0>0\), depending only on \(d,t\), such that the following holds.
Suppose that \(\widetilde\Lambda_dQ\subset Q_0\)  and 
\(
    \mathcal M_u(Q)=\widetilde{\mathcal{M}}\ge N_0.
\)
Partition \(Q\) into \(K^d\) equal subcubes. Then at least one subcube \(q_0\)
satisfies
\(
    \mathcal M_u(q_0)\le \frac{\widetilde{\mathcal{M}}}{2}.
\)
Moreover, every subcube \(q\) of the partition satisfies
\(
    \sup_q |u|
    \ge
    e^{-a_0\widetilde{\mathcal{M}}}\sup_Q |u|.
\)
\end{lemma}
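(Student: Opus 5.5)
The good subcube comes from two separate halvings: the potential part of $\mathcal M_u$ halves for every subcube, and the doubling part halves for at least one subcube. The lower bound on $\sup_q|u|$ is a chaining argument in the spirit of the base-case claim in Lemma~\ref{lem:bounded-index-base-case}.

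\emph{Potential term.} For a subcube $q$ of the partition, $s(q)=s(Q)/K$ and $\widetilde\Lambda_d q\subset\widetilde\Lambda_d Q$. Hence
\[
s(\widetilde\Lambda_d q)^{2-d/t}\|V\|_{L^t(\widetilde\Lambda_dq)}\le K^{-(2-d/t)}\,s(\widetilde\Lambda_d Q)^{2-d/t}\|V\|_{L^t(\widetilde\Lambda_dQ)}.
\]
Since $t>d/2$, taking $K$ large gives $\mathcal A_V(\widetilde\Lambda_dq)\le 1+\frac14\mathcal A_V(\widetilde\Lambda_dQ)$. The additive $1$ is harmless once $N_0$ is large. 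The key point is that this holds for every subcube, so only the doubling part $\mathcal N_u(2q_0)$ needs to be made small.

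\emph{Doubling term.} Here I follow the Logunov--Malinnikova scheme from \cite{LM18}: if many subcubes have large doubling index, the growth accumulates along a chain and contradicts the bound $\mathcal N_u(2Q)\le\widetilde{\mathcal M}$. The monotonicity needed for that argument is replaced by Lemma~\ref{lem:deep-inside-comparison} and by the small-scale doubling \eqref{eq:small-scale-L2-doubling}, \eqref{eq:deep-cube-comparison}, which carry the additive error $C\mathcal A_V$.

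\begin{enumerate}
\item Pick $z_0\in Q$ with $|u(z_0)|=\sup_Q|u|$ and let $q_0$ be a subcube near $z_0$ of side comparable to $s(Q)/K$.
\item By \eqref{max-doubling-doubling}, $\mathcal N_u(2q_0)\le C(N_u(2\eta^{-1}q_0)+\mathcal A_V(\widetilde\Lambda_d q_0))$. It therefore suffices to bound the doubling index of the fixed-ratio cube $2\eta^{-1}q_0$ by $\widetilde{\mathcal M}/(4C)$.
\item Split into cases according to $\mathcal A_V(\widetilde\Lambda_dQ)$.
\begin{itemize}
\item If $\mathcal A_V(\widetilde\Lambda_dQ)$ is a large portion of $\widetilde{\mathcal M}$, then the first step already gives halving in the potential part. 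The doubling part is then at most $\mathcal N_u(2Q)\le\widetilde{\mathcal M}-\mathcal A_V$, and I would still need a factor of shrinkage.
\item The genuine case is when $\mathcal N_u(2Q)$ dominates. There I would use the iterated doubling: multiplying the doubling quotients \eqref{eq:small-scale-L2-doubling} over the $\kappa=\log_2K$ dyadic scales between $s(Q)$ and $s(Q)/K$ gives growth $e^{\kappa(\cdots)}$.
\end{itemize}
\end{enumerate}

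The honest mechanism in \cite{LM18} is a counting lemma: the number of subcubes with $N_u(q)>\widetilde{\mathcal M}/2$ is at most $K^{d-1-c}$. Since the total number of subcubes is $K^d$, some subcube escapes, provided $K$ is large. Applied with $N_u$ in place of harmonic doubling and errors $C\mathcal A_V\le C\widetilde{\mathcal M}/A$, this is the step I expect to be the main obstacle. Its proof uses a hyperplane or chain of cubes along which the doubling indices all exceed $\widetilde{\mathcal M}/2$, so the total growth is $\exp(cK\widetilde{\mathcal M})$. This contradicts the bound $\exp(C\widetilde{\mathcal M}\log K)$ coming from Theorem~\ref{th-1}. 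I would first try to reprove the simple form: a single chain of $K$ consecutive subcubes, each with index $\ge\widetilde{\mathcal M}/2$, forces $\sup_{2Q}|u|/\sup_Q|u|\ge e^{cK\widetilde{\mathcal M}}$, contradicting $\widetilde{\mathcal M}$ for $K$ large.

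\emph{Lower bound on $\sup_q|u|$.} Let $q_\ast$ contain $z_0$ and let $q$ be any subcube. Connect $q$ to $q_\ast$ by a chain of at most $dK$ face-adjacent subcubes. Since $4q^{(j)}\subset 2Q$, each step satisfies $\sup_{q^{(j+1)}}|u|\le e^{2\mathcal N_u(2Q)}\sup_{q^{(j)}}|u|$. Therefore $\sup_q|u|\ge e^{-2dK\widetilde{\mathcal M}}\sup_Q|u|$, and we take $a_0=2dK$. This is admissible because $K$ depends only on $d,t$.
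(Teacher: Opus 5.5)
Your potential-term estimate and your lower bound for $\sup_q|u|$ are fine. The face-adjacent chain gives $a_0=2dK$; the paper uses the concentric dilates $2^jq$, $0\le j\le\kappa+1$, and gets $a_0=\kappa+1$. Either is admissible since $K=K(d,t)$.

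The gap is the central step: you never show that some subcube satisfies $N_u(2\eta^{-1}q_0)\le\widetilde{\mathcal M}/(4C)$.
\begin{itemize}
\item Choosing $q_0$ near a maximum point of $|u|$ has no justification, and your case split is left open.
\item The counting lemma you cite (fewer than $K^{d-1-c}$ subcubes with index above $\widetilde{\mathcal M}/2$) is Logunov's hyperplane lemma from the work on Yau's conjecture. It is far deeper than what is needed and is not established for the present equations.
\item The ``simple form'' you plan to prove is false for $d\ge3$. Take $V=0$, $u=\operatorname{Re}(x_2+ix_3)^n$ with $n\in4\mathbb N$, $Q=[-1,1]^3$, and $K$ even. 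The $K$ face-adjacent subcubes $[a,a+2/K]\times[0,2/K]^2$ each have doubling index exactly $n\log\frac32$. For large $n$ this exceeds $\frac12\widetilde{\mathcal M}=\frac12n\log2+O(1)$, yet $\sup_{2Q}|u|/\sup_Q|u|=2^n$ for every $K$.
\end{itemize}
A prescribed chain of large-index cubes forces no growth along the chain, because $|u|$ can grow transversally to it.

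The missing idea is elementary (compare \cite[Lemma~4.1.1]{LM19}).
\begin{enumerate}
\item Let $q_0$ minimize $N_u(p)$, where $p=2\eta^{-1}q$, over \emph{all} subcubes $q$, and set $N_{\min}=N_u(2\eta^{-1}q_0)$.
\item Every point of $Q$ lies in some $p$ with $N_u(p)\ge N_{\min}$. Start from $x_0\in\frac12Q$ with $|u(x_0)|=\sup_{Q/2}|u|$. You find $x_1\in 2p$, so $|x_1-x_0|\le CK^{-1}s(Q)$, with $|u(x_1)|\ge e^{N_{\min}}|u(x_0)|$.
\item Repeat this $cK$ times without leaving $Q$. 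This gives $\sup_Q|u|\ge e^{cKN_{\min}}\sup_{Q/2}|u|$, hence $N_{\min}\le\frac CK\mathcal N_u(Q)\le\frac CK\widetilde{\mathcal M}$.
\end{enumerate}
The walk may move in whatever direction $|u|$ grows, which is exactly what a fixed chain cannot guarantee.

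Then Corollary~\ref{cor:maximal-index-control} applied to $2q_0$ gives $\mathcal N_u(2q_0)\le\frac{C}{K}\widetilde{\mathcal M}+A_1\mathcal A_V(\widetilde\Lambda_dq_0)$. Combine this with your estimate $\mathcal A_V(\widetilde\Lambda_dq_0)\le C\bigl(1+K^{-\mu(2-d/t)}\mathcal A_V(\widetilde\Lambda_dQ)\bigr)$. Choose $K$ large and then $N_0$ large; no case distinction is needed.
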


\begin{proof}
We first prove the existence of a subcube with smaller modified
doubling index. Let \(\eta\) be the constant in
Corollary~\ref{cor:maximal-index-control}, and write
\(
p=2\eta^{-1}q
\) for each subcube $q$ of the partition.
Taking \(K\) sufficiently large, depending only on the dimension,
we may assume that all the cubes \(2p_i\) are contained in \(2Q\).

Let $N_{min}=N(p_0)$ be the smallest doubling index for cubes $p$. 
Let $x_0\in Q/2$ be such that $|u(x_0)|=\sup_{Q/2}|u|$. Then $x_0\in q\subset p$ for some $q$ and then there exists $x_1$ such that $ |x_0-x_1|\le CK^{-1}s(Q)$ such that 
$|u(x_1)|\ge e^{N_{min}}|u(x_0)|.$
Then, repeating this step
\(cK\) times, we obtain
\[
\sup_{Q}|u|
\ge
e^{cKN_{min}}\sup_{Q/2}|u|.
\]
 The argument is similar to one in \cite[Lemma~4.1.1]{LM19}.
Consequently,
\[
N(p_0)
\le
\frac{C}{K}
\log\frac{\sup_{Q}|u|}{\sup_{Q/2}|u|}
\le
\frac{C}{K}\mathcal N_u(Q).
\]
We now apply Corollary~\ref{cor:maximal-index-control} to \(2q_0\). It implies 
\[
\mathcal N_u(2q_0)
\le
C_1N_u(p_0)+C_2\mathcal{A}_V(\widetilde\Lambda_dq_0)
\le
\frac{C_3}{K}\mathcal N_u(Q)
+C_2\mathcal{A}_V(\widetilde\Lambda_d q_0)
\]

We next compare the potential terms. Since \(q_0\subset Q\), we have
\[
\widetilde\Lambda_dq_0\subset\widetilde\Lambda_dQ,
\qquad
s(\widetilde\Lambda_dq_0)
=
K^{-1}s(\widetilde\Lambda_dQ).
\]
Therefore
\[
s(\widetilde\Lambda_dq_0)^{2-d/t}
\|V\|_{L^t(\widetilde\Lambda_dq_0)}
\le
K^{-(2-d/t)}
s(\widetilde\Lambda_dQ)^{2-d/t}
\|V\|_{L^t(\widetilde\Lambda_dQ)}.
\]
Since \(t>d/2\), setting
\[
\sigma=\mu\left(2-\frac dt\right)>0,
\]
we obtain
\[
\mathcal A_V(\widetilde\Lambda_dq_0)
\le
C\left(
1+K^{-\sigma}
\mathcal A_V(\widetilde\Lambda_dQ)
\right).
\]

Therefore
\[
\mathcal M_u(q_0)=
\mathcal N_u(2q_0)+\mathcal{A}_V(\widetilde\Lambda_dq_0)\le
\frac{C_3}{K}\mathcal N_u(2Q)
+C_5K^{-\sigma}\mathcal{A}_V(\widetilde\Lambda_dQ)+C_6.
\]
We choose the dyadic integer \(K\) sufficiently large so that
\(
\frac{C_3}{K}\le\frac14,\) and 
\(
C_5K^{-\sigma}\le\frac 14.
\)
Then
\(
\mathcal M_u(q_0)
\le
\frac14\widetilde{\mathcal M}+C_6\le \frac12\widetilde{\mathcal M},
\)
when $\widetilde{M}\ge N_0$ and $N_0$ is large enough.

It remains to prove the lower bound for the supremum on each subcube. Let \(q\) be an
arbitrary subcube of the partition. Since \(K=2^\kappa\), consider the concentric dyadic
dilates
\(
    q^{(j)}=2^j q,\)
for \(j=0,1,\ldots,\kappa+1.
\)
Then \(q^{(0)}=q\), the cube \(q^{(\kappa)}=Kq\) has the same side length as \(Q\), and
\(q^{(\kappa+1)}=2Kq\) contains \(Q\). Moreover, for \(j=0,\ldots,\kappa\), the cube
\(q^{(j)}\) is contained in \(2Q\).
Since \(Q\subset q^{(\kappa+1)}\), by Corollary~\ref{cor:maximal-index-control},  we obtain
\[
    \sup_Q|u|
    \le
    e^{(\kappa+1)\widetilde{\mathcal{M}}}\sup_q|u|.
\]
This proves the lemma.
\end{proof}

We now complete the proof of Proposition \ref{prop:R}.
\begin{proof}[Proof of  Proposition \ref{prop:R}]
For \(\widetilde{\mathcal{M}}\ge1\) and \(a>0\), define
\[
    F(\widetilde{\mathcal{M}},a)
    =
    \sup
    \frac{
    \left|
    \left\{
        x\in Q:\ |u(x)|<e^{-a}\sup_Q|u|
    \right\}
    \right|
    }{|Q|},
\]
where the supremum is taken over all cubes \(Q\), all potentials \(V\), and all
solutions \(u\) of 
\[
    \Delta u+V(x)u=0
\]
in $2\Lambda_dQ$ such that
\(
    \mathcal M_u(Q)\le \widetilde{\mathcal{M}}.
\)
This definition is invariant under rescaling of \(Q\).

We claim that
\[
    F(\widetilde{\mathcal{M}},a)\le C e^{-\gamma a/\widetilde{\mathcal{M}}}
\]
for all \(\widetilde{\mathcal{M}}\ge1\) and all \(a>0\), where \(C,\gamma>0\) depend only on \(d,t\). Following the argument in \cite{LM18},
it is easy to see that  the good-subcube lemma (i.e. Lemma\ref{lem:good-subcube-modified}) above implies the following recursive inequality.
\if false
We first derive the recursive inequality. Assume that
\(
    \widetilde{\mathcal{M}}\ge N_0,
    \
    a>a_0\widetilde{\mathcal{M}}.
\)
Let \(Q\) and \(u\) be as above
\(
    \mathcal M_u(Q)\le\widetilde{\mathcal{M}},
\)
and define
\(
    m_Q=\sup_Q|u|.
\)
Partition \(Q\) into \(K^d\) equal subcubes. By
Lemma~\ref{lem:good-subcube-modified}, there is one subcube \(q_0\) such that
\[
    \mathcal M_u(q_0)\le\frac{\widetilde{\mathcal{M}}}{2},
\]
and every subcube \(q\) satisfies
\[
    \sup_q|u|\ge e^{-a_0\widetilde{\mathcal{M}}}m_Q.
\]
Therefore, for every subcube \(q\),
\[
    \{x\in q:\ |u(x)|<e^{-a}m_Q\}
    \subset
    \{x\in q:\ |u(x)|<e^{-a+a_0\widetilde{\mathcal{M}}}\sup_q|u|\}.
\]
The good subcube contributes at most
\[
    |q_0|F\left(\frac{\widetilde{\mathcal{M}}}{2},a-a_0\widetilde{\mathcal{M}}\right),
\]
and the remaining \(K^d-1\) subcubes contribute at most
\[
    \sum_{q\ne q_0}|q|F(\widetilde{\mathcal{M}},a-a_0\widetilde{\mathcal{M}}).
\]
Dividing by \(|Q|\), and using \(|q|=K^{-d}|Q|\), we obtain
\[
    F(\widetilde{\mathcal{M}},a)
    \le
    K^{-d}F\left(\frac{\widetilde{\mathcal{M}}}{2},a-a_0\widetilde{\mathcal{M}}\right)
    +
    (1-K^{-d})F(\widetilde{\mathcal{M}},a-a_0\widetilde{\mathcal{M}}).
\]
Thus
\fi
\[
\label{eq:F-recursion}
    F(\widetilde{\mathcal{M}},a)
    \le
    (1-s)F\left(\frac{\widetilde{\mathcal{M}}}{2},a-a_0\widetilde{\mathcal{M}}\right)
    +
    sF(\widetilde{\mathcal{M}},a-a_0\widetilde{\mathcal{M}}),
\]
where $s=1-K^{-d}<1.$

 After establishing the recursion inequality, we complete the proof of (\ref{eq:RPDE}) as in \cite{LM18}.
\end{proof}
\subsection{Generalized three ball inequality} We show that the Remez inequality obtained above implies the propagation of smallness from sets of positive measure. 
\begin{corollary}
\label{cor:measurable-three-cube}
Let \(Q\) be a cube and assume that
\[
    \Delta u+V(x)u=0
    \qquad\text{in }\widetilde\Lambda_d Q,
\]
where \(V\in L^t(\widetilde\Lambda_d Q)\), \(t>d/2\). 
Then for every measurable set \(E\subset Q\) with \(|E|>0\), one has
\[
    \sup_Q |u|
    \le
    \left(
        C\frac{|Q|}{|E|}
    \right)^{
        C\alpha\left(1+\mathcal{A}_V( \widetilde\Lambda_d Q)\right)
    }
    \left(\sup_E |u|\right)^\alpha
    \left(\sup_{\widetilde\Lambda_dQ}|u|\right)^{1-\alpha},
\]
where
\[
    \alpha
    =
    \left(
        1+
        C\log\left(C\frac{|Q|}{|E|}\right)
    \right)^{-1},
\]
and the constant \(C\) depends only on \(d,t\), and on the fixed $\varepsilon>0$ for the case $d=2$.
\end{corollary}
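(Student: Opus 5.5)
The plan is to derive the corollary from Proposition~\ref{prop:R} applied to the cube \(Q\), using the bound \eqref{max-doubling-doubling} on \(\mathcal M_u(Q)\). Write \(m=\sup_Q|u|\), \(\delta=\sup_E|u|\), and \(M=\sup_{\widetilde\Lambda_dQ}|u|\). We may assume \(\delta<m\), since otherwise the inequality is trivial because \(\alpha\le 1\) and \(m\le M\). Then \(E\subset E_a(u)\) with \(a=\log(m/\delta)\). The hypotheses of Proposition~\ref{prop:R} hold since \(u\) solves the equation in \(\widetilde\Lambda_dQ\); if needed, one slightly shrinks \(Q\) or enlarges \(\widetilde\Lambda_d\) by a dimensional factor. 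Proposition~\ref{prop:R} then gives
\[
|E|\le |E_a(u)|\le Ce^{-\gamma a/\mathcal M_u(Q)}|Q|,
\qquad\text{so}\qquad
\log\frac{m}{\delta}\le \frac{\mathcal M_u(Q)}{\gamma}\,\log\Bigl(C\frac{|Q|}{|E|}\Bigr).
\]

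Next we bound \(\mathcal M_u(Q)\) in terms of \(m\), \(M\) and the potential. By \eqref{max-doubling-doubling},
\[
\mathcal M_u(Q)\le C\bigl(N_u(2\eta^{-1}Q)+\mathcal A_V(\widetilde\Lambda_dQ)\bigr).
\]
We also need \(N_u(2\eta^{-1}Q)=\log\bigl(\sup_{4\eta^{-1}Q}|u|/\sup_{2\eta^{-1}Q}|u|\bigr)\le \log(M/m)\). This holds because \(4\eta^{-1}Q\subset\widetilde\Lambda_dQ\) (since \(\widetilde\Lambda_d=2\eta^{-1}\Lambda_d\) and \(\Lambda_d\ge 2\)) and \(Q\subset 2\eta^{-1}Q\). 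Writing \(L=\log(C|Q|/|E|)\ge 1\) and \(A=\mathcal A_V(\widetilde\Lambda_dQ)\), we obtain
\[
\log\frac{m}{\delta}\le CL\Bigl(\log\frac{M}{m}+A\Bigr).
\]

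Finally we rearrange. Multiplying through and adding \(CL\log m\) to both sides gives
\[
(1+CL)\log m\le \log\delta+CL\log M+CLA.
\]
Dividing by \(1+CL\) and setting \(\alpha=(1+CL)^{-1}\), so that \(1-\alpha=CL\alpha\), yields
\[
\log m\le \alpha\log\delta+(1-\alpha)\log M+\alpha\, CLA.
\]
The last term is \(C\alpha A\log(C|Q|/|E|)\), which gives exactly the prefactor \(\bigl(C|Q|/|E|\bigr)^{C\alpha(1+A)}\).

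The main obstacle is the bookkeeping for the cube inclusions. One must check that \(4\eta^{-1}Q\subset\widetilde\Lambda_dQ\), and that the hypothesis \(\widetilde\Lambda_dQ\subset Q_0\) needed in Proposition~\ref{prop:R} and Corollary~\ref{cor:maximal-index-control} is consistent with \(u\) being defined only on \(\widetilde\Lambda_dQ\). If the constants do not match exactly, I would apply the argument to \(Q\) inside a slightly larger domain, adjusting \(\widetilde\Lambda_d\) by a fixed dimensional factor. A second point to verify is that \(\mathcal M_u(Q)\ge1\), so that the Remez bound is meaningful; this holds because \(\mathcal A_V\ge1\).
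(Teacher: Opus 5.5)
Your proposal is correct and is essentially the paper's proof. Both apply Proposition~\ref{prop:R} with $a=\log\bigl(\sup_Q|u|/\sup_E|u|\bigr)$, use \eqref{max-doubling-doubling} with $N_u(2\eta^{-1}Q)\le\log\bigl(\sup_{\widetilde\Lambda_dQ}|u|/\sup_Q|u|\bigr)$, and make the same rearrangement; the paper normalizes $\sup_{\widetilde\Lambda_dQ}|u|=1$ and argues multiplicatively where you use logarithms. Two small points: since $E_a$ is defined by a strict inequality, take $a$ slightly below $\log(m/\delta)$ and pass to the limit, and the case $\delta=0<m$ cannot occur because Proposition~\ref{prop:R} would then force $|E|=0$.
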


\begin{remark}
    This is a version of Theorem \ref{th-2}. The full statement of Theorem \ref{th-2} can be deduced from this corollary by iteration.
\end{remark}

\begin{proof}
We normalize the solution by $\sup_{\widetilde\Lambda_d Q}|u|=1$ and denote
\(  m_E=\sup_E |u|,\) and 
    \(m_Q=\sup_Q |u|\).
The inequality \eqref{eq:RPDE} readily implies
\[
    m_Q
    \le
    m_E
    \left(
        C\frac{|Q|}{|E|}
    \right)^{C\mathcal M_u(Q)}.
\]
Then \eqref{max-doubling-doubling} also gives
\[
    \mathcal M_u(Q)
    \le
    C\left(-\log m_Q+
        \mathcal{A}_V(\widetilde\Lambda_d Q)
    \right).
\]
Therefore, setting
\(
    D=C\frac{|Q|}{|E|},
\)
we obtain
\[
    m_Q
    \le
    m_E
    D^{C(1+\mathcal{A}_V(\widetilde\Lambda_d  Q))}
    m_Q^{-C\log D}.
\]
It follows that
\[
    m_Q^{1+C\log D}
    \le
    m_E
    D^{C(1+\mathcal{A}_V(\widetilde\Lambda_d Q))}.
\]
Raising both sides to the power
\(
    \alpha
    =
    \left(1+C\log D\right)^{-1}
\)
gives
the desired estimate.
\end{proof}

\subsection{Application to Dirichlet eigenfunctions in planar Lipschitz domains}

We now describe an application of the two-dimensional version of the preceding results.
Let \(\Omega\subset\mathbb C\) be a bounded simply connected Lipschitz domain.  We denote by
\(  F:\mathbb D\to\Omega
\)
a conformal map.

We recall that the Dirichlet Laplacian on
\(\Omega\) has
a discrete spectrum and refer the reader to Davies~\cite{Davies89} for the spectral-theoretic
background. A Dirichlet Laplace eigenfunction \(u_\lambda\) satisfies
\[
    u_\lambda\in H^1_0(\Omega),
    \qquad
    \int_\Omega \nabla u_\lambda\cdot\nabla \varphi\,dx
    =
    \lambda\int_\Omega u_\lambda\varphi\,dx
\]
for every \(\varphi\in H^1_0(\Omega)\).

We also use the following fact about the conformal map. Since \(\Omega\) is a bounded  simply connected planar Lipschitz domain, it is a
quasidisk, see  Pommerenke~\cite{Pommerenke92} and Gehring~\cite{Gehring73}. Hence the  map \(F:\mathbb D\to\Omega\) admits a quasiconformal
extension to the plane. By the  theorem on integrability for quasiconformal maps, \cite[Theorem 1]{Gehring73},
there exists \(p_\Omega>2\), depending only on  \(\Omega\),
such that
\(
    F'\in L^{p_\Omega}(\mathbb D).
\)
 We fix
\(t_\Omega\in(1,2]\) such that
\[
    V(z)=|F'|^2\in L^{t_\Omega}(\mathbb D).
\]

Let
\(
    v_\lambda(z)=u_\lambda(F(z)).
\)
By conformal invariance of the Dirichlet integral in dimension two,
\(
    v_\lambda\in H^1_0(\mathbb D)
\)
and  satisfies
\[
    \Delta v_\lambda
    +
    \lambda V(z) v_\lambda=0
\]
in \(\mathbb D\) in the weak sense.  
Since \(v_\lambda\in H^1_0(\mathbb D)\), we may reflect it oddly across
\(\partial\mathbb D\). More explicitly,  define
\[
    \widetilde v_\lambda(z)
    =
    \begin{cases}
        v_\lambda(z), & |z|<1,\\
        -v_\lambda(1/\overline z), & |z|>1.
    \end{cases}
\]
Then \(\widetilde v_\lambda\) is a weak solution of
\(
    \Delta \widetilde v_\lambda+\lambda \widetilde V\widetilde v_\lambda=0
\)
in \(\mathbb C\), where
\[
    \widetilde V(z)
    =
    \begin{cases}
        |F'(z)|^2, & |z|<1,\\
        |z|^{-4} |F'(1/\overline z)|^2, & |z|>1.
    \end{cases}
\]
We also have
\begin{align*}
    \int_{\mathbb C}|\widetilde V(z)|^{t_\Omega}dm_2(z)&=\int_{|z|<1}|F'(z)|^{2t_\Omega}dm_2(z)+\int_{|z|>1}|z|^{-4t_\Omega}|F'(1/\overline z)|^{2t_\Omega}dm_2(z)\\&=\int_{|z|<1}|F'(z)|^{2t_\Omega}(1+|z|^{4t_\Omega-4})dm_2(z)\le 2\int_{|z|<1}|F'(z)|^{2t_\Omega}.\end{align*}
Therefore  the reflected potential still belongs to \(L^{t_\Omega}\).

\begin{proposition}
\label{prop:BMO-Lipschitz-domain}
Let \(\Omega\subset\mathbb C\) be a bounded simply connected Lipschitz domain, and let $\varepsilon>0$. Then there exists $C_{\Omega,\varepsilon}$ such that for any
  Dirichlet Laplace eigenfunction \(u_\lambda\)  in \(\Omega\), we have 
\[
    \|\log |u_\lambda|\|_{\mathrm{BMO}(\Omega)}
    \le
    C_{\Omega,\varepsilon}
    \left(1+\lambda^{\mu_\varepsilon(2, t_\Omega)}\right).
\]
\end{proposition}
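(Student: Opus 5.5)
We work with the reflected function \(\widetilde v_\lambda\) on \(\mathbb C\), which solves \(\Delta\widetilde v_\lambda+\lambda\widetilde V\widetilde v_\lambda=0\) with \(\widetilde V\in L^{t_\Omega}\). First we bound \(\log|v_\lambda|\) in BMO of the disc, and then we transfer the bound to \(\Omega\) through the conformal map \(F\). The potential is \(W=\lambda\widetilde V\). On any ball \(B\) of radius \(R\le 4\) we have
\[
R^{2-2/t_\Omega}\|W\|_{L^{t_\Omega}(B)}\le 4^{2}\lambda\|\widetilde V\|_{L^{t_\Omega}(\mathbb C)}\le C_\Omega\lambda .
\]
Hence \(\mathcal A_V\) of every cube of size \(O(1)\) is at most \(C_\Omega(1+\lambda)^{\mu_\varepsilon(2,t_\Omega)}\), which is the source of the exponent in the statement.

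The key step is a uniform bound on the maximal doubling index. Fix a cube \(Q_*\) of side comparable to \(1\) containing \(\overline{\mathbb D}\), with \(\widetilde\Lambda_d Q_*\subset B_{10}\). By Corollary~\ref{cor:maximal-index-control} and \eqref{max-doubling-doubling}, \(\mathcal M_{\widetilde v}(Q_*)\) is controlled by \(N_{\widetilde v}(2\eta^{-1}Q_*)+C_\Omega\lambda^{\mu}\). We have to show that this one large-scale doubling index is itself \(\le C_\Omega(1+\lambda^{\mu})\). The reflection gives \(|\widetilde v(z)|=|v(1/\bar z)|\), so \(\sup_{B_{10}}|\widetilde v|=\sup_{\mathbb D}|v|\). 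Thus the sup over a big cube equals the sup over the unit disc, and it is bounded by the sup over \(Q_*\supset\mathbb D\). The ratio \(\sup_{2\eta^{-1}Q_*}/\sup_{Q_*}\) is therefore at most \(1\), which makes this index trivially bounded. This is the step I expect to be the main obstacle. The reflection formula is exact, but it requires \(\widetilde v\) to be a solution on all of \(\widetilde\Lambda_dQ_*\). That holds since \(\widetilde v\) solves the equation on \(\mathbb C\setminus\{0\}\) and at infinity, and \(0\) lies inside \(\mathbb D\), where \(v\) already solves it. So \(\mathcal M_{\widetilde v}(q)\le \mathcal M_{\widetilde v}(Q_*)\le C_\Omega(1+\lambda^{\mu})=:M\) for every subcube \(q\subset Q_*\).

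Next, Proposition~\ref{prop:R} applied to each subcube \(q\subset Q_*\) gives
\[
|\{x\in q:\log|\widetilde v|<\log\sup_q|\widetilde v|-a\}|\le Ce^{-\gamma a/M}|q| .
\]
Since \(\log|\widetilde v|\le\log\sup_q|\widetilde v|\) trivially, integrating this distribution bound shows that the mean oscillation of \(\log|v|\) over \(q\) is \(\le CM\). Balls are handled by comparing them with cubes. Hence \(\|\log|v_\lambda|\|_{\mathrm{BMO}(\mathbb D)}\le C_\Omega(1+\lambda^{\mu_\varepsilon(2,t_\Omega)})\).

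Finally, \(\log|u_\lambda|=\log|v_\lambda|\circ F^{-1}\). We use that BMO is preserved, up to a constant depending on the quasiconformality constant, under composition with the quasiconformal extension of \(F\) (Reimann's theorem). Restricting to \(\Omega\) then gives the claim with \(C_{\Omega,\varepsilon}\).
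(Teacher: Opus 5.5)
Your proposal is correct and follows the paper's proof essentially step by step: the reflection gives $\sup_{\mathbb C}|\widetilde v_\lambda|=\sup_{\mathbb D}|v_\lambda|$, so the doubling index of any cube containing $\mathbb D$ vanishes. Corollary~\ref{cor:maximal-index-control} then bounds the modified index by $C_{\Omega,\varepsilon}(1+\lambda^{\mu_\varepsilon(2,t_\Omega)})$ (the paper invokes Lemma~\ref{lem:deep-inside-comparison} directly, which amounts to the same thing), and Proposition~\ref{prop:R}, integrating the distribution bound, and Reimann's quasiconformal invariance of BMO finish the argument, exactly as in the paper.

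One correction to your ``main obstacle'' paragraph: nothing needs checking at $0$ or at $\infty$, since $z=0$ is covered by the interior formula. What makes $\widetilde v_\lambda$ a weak solution is the gluing across $\partial\mathbb D$, which works because $v_\lambda\in H^1_0(\mathbb D)$ has zero trace and $z\mapsto 1/\bar z$ is anticonformal, as in the paper's setup preceding the proposition.
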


\begin{proof}
Denoted  
\(\widetilde v_\lambda\) be as above. Let $Q_0$ be a square containing $\mathbb D$ and let $\widetilde{D}$ be a large fixed disk centered at the origin such that for any square $\widetilde\Lambda_2 Q_0\subset\widetilde D$. We prove that
for every \(\varepsilon>0\),
\[
    \|\log |v_\lambda|\|_{\mathrm{BMO}(\mathbb D)}
    \le
    C_{\Omega,\varepsilon}
    \left(1+\lambda^{\mu_\varepsilon(2, t_\Omega)}\right),
\]
where
\[
    \mu_\varepsilon(2, t_\Omega)
    =
    \frac{t_\Omega}{2(t_\Omega-1)}+\varepsilon .
\]
 The estimate for \(u_\lambda\) follows from
the quasiconformal invariance of BMO, since \(F\) extends quasiconformally to the plane, see e.g. \cite{R74}.

Normalize
\(
    \sup_{\widetilde{D}}|\widetilde{v}_\lambda|=\sup_{\mathbb D}|v_\lambda|=1.
\)
The reflected function \(\widetilde v_\lambda\) solves the Schrödinger equation in $\widetilde{D}$, with potential bounded in
\(L^{t_\Omega}\) by \(C_\Omega\lambda\). Hence the constant $\mathcal A_V(\widetilde{\Lambda}_2Q)$ which 
appears in the doubling and Remez estimates is bounded by
\(
    C_{\Omega,\varepsilon}
    \left(1+\lambda^{\mu_\varepsilon(2, t_\Omega)}\right).
\)
We also see that $N_{\widetilde{v}_\lambda}(Q_1)=0$ for any $Q_1\supset \mathbb D$.

Then Lemma \ref{lem:deep-inside-comparison} implies \(N_{\widetilde{v}_\lambda}(q)\le A_1\mathcal A_{\lambda\widetilde{V}}(\widetilde\Lambda_2Q_0),\) for any $q\subset Q_0$. Then 
\[\widetilde{\mathcal M}=\mathcal{M}_{\widetilde{v}_\lambda}(Q_0)\le C_{\Omega,\varepsilon}
    \left(1+\lambda^{\mu_\varepsilon(2, t_\Omega)}\right).\]
    By Proposition \ref{prop:R}, for every \(a>0\), we obtain
\[
\left|
\left\{
z\in Q:
|\widetilde v_\lambda(z)|
<
e^{-a}\sup_Q|\widetilde v_\lambda|
\right\}
\right|
\le
Ce^{-ca/\widetilde{\mathcal M}}|Q|.
\]
Set
\(
f=\log|\widetilde v_\lambda|,\) \(M_Q=\log\sup_Q|\widetilde v_\lambda|.\)
The preceding estimate implies that \(f\in L^1(Q)\) and
\[
|\{z\in Q:M_Q-f(z)>a\}|
\le
Ce^{-ca/\widetilde{\mathcal{M}}}|Q|.
\]
Integrating this estimate with respect to \(a\), we obtain
\[
\fint_Q(M_Q-f)\,dm_2
\le
C\widetilde{\mathcal M}.
\]
Since \(f\le M_Q\) on \(Q\),
\[
\fint_Q|f-f_Q|\,dm_2
\le
2\fint_Q(M_Q-f)\,dm_2
\le
C\widetilde{\mathcal M}.
\]
Taking the supremum over all squares \(Q\subset\mathbb D\), and using
\(\widetilde v_\lambda=v_\lambda\) in \(\mathbb D\) gives
\[
\|\log|v_\lambda|\|_{\mathrm{BMO}(\mathbb D)}
\le
C\widetilde{\mathcal M}
\le
C_{\Omega,\varepsilon}
\left(1+\lambda^{\mu_\varepsilon(2,t_\Omega)}\right).
\]
This completes the proof.
\end{proof}

\begin{remark}
On a compact smooth Riemannian surface, propagation of
smallness estimates imply BMO-type bounds for \(\log |\varphi_\lambda|\) with the
natural eigenvalue scale \(\sqrt{\lambda}\) see \cite{LM18} and also the related earlier estimates of
Donnelly and Fefferman~\cite{DF90}. Proposition~\ref{prop:BMO-Lipschitz-domain} gives an analogue for
Dirichlet eigenfunctions in simply connected planar Lipschitz domains, one can easily generalize the argument to multiply connected domains. The loss is expressed through the integrability exponent of the derivative of
the conformal map. Natural question is whether this two-dimensional result holds in higher dimensions.
\end{remark}

\section{Proof of power weight Carleman inequalities}
\label{s:Carleman}
\subsection{Preliminary remarks, logarithmic coordinates, and $L^2$-estimates}
We now prove the Carleman estimate stated in Theorem~\ref{Carlpq}.
The proof follows the ideas of Jerison in \cite{Jer86} and is based on
the factorization of the Laplacian in polar coordinates and on spectral projection
estimates on the sphere by Sogge \cite{Sog86}. The aim of this section is  to keep track of the precise power of the parameter
\(\tau\) in a range of \(L^p\)-\(L^q\) Carleman inequalities.

We pass to logarithmic polar coordinates
\[
    x=e^\ell\omega,\qquad \omega\in S^{d-1},\qquad \ell=\log |x|.
\]
Then
\(
    |x|^{-d}\,dx=d\ell\,d\omega
\)
and
\begin{equation}
    e^{2\ell}\Delta
    =
    \partial_\ell^2+(d-2)\partial_\ell+\Delta_\omega ,
    \label{laplace}
\end{equation}
where \(\Delta_\omega\) is the Laplace--Beltrami operator on \(S^{d-1}\).
In what follows we write
\(
    \|\cdot\|_p=\|\cdot\|_{L^p(d\ell d\omega)}.
\)

Let \(E_k\) be the eigenspace of \(-\Delta_\omega\) corresponding to the eigenvalue
\(
    k(k+d-2),\) \(k=0,1,2,\ldots,
\)
and let \(P_k\) be the orthogonal projection onto \(E_k\). 

Suppose that $w\in C^\infty(\mathbb S^{d-1})$ and 
\(  w=\sum_{k\ge0}w_k,\ w_k=P_kw,
\)
then
\[
    -\Delta_\omega w
    =
    \sum_{k\ge0} k(k+d-2)w_k .
\]
In particular,
\[
    \|\Delta_\omega w\|^2_{L^2(d\ell d\omega)}
    =
    \sum_{k\ge0} k^2(k+d-2)^2
    \|w_k\|^2_{L^2(d\ell d\omega)}.
\]
If \(\Omega_1,\ldots,\Omega_d\) denote the tangential projections of the standard basis vector fields onto the
sphere, then
\begin{equation}
    \sum_{j=1}^d
    \|\Omega_j w\|^2_{L^2(d\ell d\omega)}
    =
    \sum_{k\ge0}
    k(k+d-2)
    \|w_k\|^2_{L^2(d\ell d\omega)}.
    \label{lll}
\end{equation}

Set
\[
\mathcal N=\left(-\Delta_\omega+\frac{(d-2)^2}{4}\right)^{1/2},
\qquad
L^\pm=\partial_\ell+\frac{d-2}{2}\pm\mathcal N.
\]
Since
\(\mathcal NP_k=(k+(d-2)/2)P_k\), the operators \(L^+\) and
\(L^-\) commute and
\[
L^+L^-=L^-L^+
=\partial_\ell^2+(d-2)\partial_\ell+\Delta_\omega
=e^{2\ell}\Delta.
\]

We next present the straightforward \(L^2\)-estimates for the first-order factors
\(L^+\) and \(L^-\). These estimates can be  obtained using integration by parts argument.  For example, (\ref{eq:Lplus-estimate}) and (\ref{eq:Lminus-estimate}) can be shown using Lemma 1 and Lemma 2 in \cite{DZ18} by replacing the weight functions $e^{-\tau(l+\ln l^2)}$ by $e^{-\tau l}$.
Hence, we skip the computations.

\begin{lemma}
\label{lem:first-order-factors}
There exists constant $C=C(d)$ such that the following inequalities hold. Let \(w\in C^\infty_0(\mathbb R\times S^{d-1})\).
(i) For every \(\tau>0\),
\begin{equation}
    \tau\|e^{-\tau\ell}w\|_2
    +
    \|e^{-\tau\ell}\partial_\ell w\|_2
    +
    \sum_{j=1}^d
    \|e^{-\tau\ell}\Omega_jw\|_2
    \le C    \|e^{-\tau\ell}L^+w\|_2 .
    \label{eq:Lplus-estimate}
\end{equation}
\noindent
(ii) If $\tau>0$ is non-resonant
then
\begin{equation}
    \|e^{-\tau\ell}w\|_2
    \le
    C
    \|e^{-\tau\ell}L^-w\|_2 .
    \label{eq:Lminus-estimate}
\end{equation}
\end{lemma}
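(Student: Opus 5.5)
The plan is to conjugate by the weight and diagonalize, using a Fourier transform in \(\ell\) and the spherical harmonic decomposition \(w=\sum_k w_k\). Then both estimates reduce to pointwise lower bounds for explicit scalar symbols, and no integration by parts is needed.

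Set \(f=e^{-\tau\ell}w\in C^\infty_0(\mathbb R\times S^{d-1})\). Since \(\partial_\ell(e^{\tau\ell}f)=e^{\tau\ell}(\partial_\ell+\tau)f\), and since \(\mathcal N\) and the \(\Omega_j\) act only in \(\omega\), we get
\[
e^{-\tau\ell}L^\pm w=\Bigl(\partial_\ell+\tau+\tfrac{d-2}{2}\pm\mathcal N\Bigr)f .
\]
Write \(f_k=P_kf\), which commutes with \(\partial_\ell\), and let \(\hat f_k(\xi,\omega)\) denote the Fourier transform in \(\ell\). Because \(\mathcal NP_k=(k+\frac{d-2}{2})P_k\), Plancherel and orthogonality of the \(E_k\) give
\[
\|e^{-\tau\ell}L^+w\|_2^2=c\sum_k\int_{\mathbb R}|i\xi+\tau+k+d-2|^2\,\|\hat f_k(\xi,\cdot)\|^2_{L^2(S^{d-1})}\,d\xi,
\]
\[
\|e^{-\tau\ell}L^-w\|_2^2=c\sum_k\int_{\mathbb R}|i\xi+\tau-k|^2\,\|\hat f_k(\xi,\cdot)\|^2_{L^2(S^{d-1})}\,d\xi .
\]

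For (i) we use \(d\ge2\), \(\tau>0\) and \(k\ge0\), so that
\[
|i\xi+\tau+k+d-2|\ \ge\ \max\{|\xi|,\ \tau,\ k+d-2\}.
\]
This bound controls three quantities.
\begin{itemize}
\item The \(\tau\)-bound gives \(\tau\|f\|_2=\tau\|e^{-\tau\ell}w\|_2\le C\|e^{-\tau\ell}L^+w\|_2\).
\item The \(|\xi|\)-bound gives \(\|\partial_\ell f\|_2\le C\|e^{-\tau\ell}L^+w\|_2\). Since \(e^{-\tau\ell}\partial_\ell w=\partial_\ell f+\tau f\), combining this with the previous item bounds \(\|e^{-\tau\ell}\partial_\ell w\|_2\).
\item The \(\Omega_j\) commute with \(e^{-\tau\ell}\), so by \eqref{lll}
\[
\sum_j\|e^{-\tau\ell}\Omega_jw\|_2^2=\sum_k k(k+d-2)\|f_k\|_2^2\le\sum_k (k+d-2)^2\|f_k\|_2^2 .
\]
The bound \(k+d-2\le|i\xi+\tau+k+d-2|\) then controls this by \(\|e^{-\tau\ell}L^+w\|_2^2\).
\end{itemize}
Summing the three bounds gives \eqref{eq:Lplus-estimate} with \(C=C(d)\), independent of \(\tau\).

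For (ii) we use \(|i\xi+\tau-k|\ge|\tau-k|\ge\operatorname{dist}(\tau,\mathbb N\cup\{0\})\). For \(\tau>1\) non-resonant this is \(>1/3\). If small \(\tau\in(0,1)\) are allowed, the non-resonance condition should be read as including \(k=0\). This gives \(\|f\|_2\le 3\|e^{-\tau\ell}L^-w\|_2\), which is \eqref{eq:Lminus-estimate}. The same computation works for \(d=2\), where \(\mathcal N=\sqrt{-\Delta_\omega}\) has eigenvalues \(k\).

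The only delicate point is (ii). The symbol \(i\xi+\tau-k\) vanishes exactly when \(\xi=0\) and \(\tau=k\), so the estimate must lose all control near integers. This is why the non-resonance hypothesis is needed, and why no gain in \(\tau\) or in derivatives is available for \(L^-\). The rest is bookkeeping with Plancherel, and justifying the interchange of the Fourier transform and the spherical harmonic expansion, which holds for \(f\in C_0^\infty\).
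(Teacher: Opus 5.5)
Your proof is correct, but it takes a different route from the paper. The paper writes out no computation: it refers to the integration-by-parts arguments of Lemmas~1 and~2 in \cite{DZ18}, with the weight $e^{-\tau(\ell+\ln\ell^2)}$ replaced by $e^{-\tau\ell}$.

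You instead use the fact that, for the exactly linear weight, $L^\pm_\tau$ has constant coefficients in $\ell$ and acts diagonally on the $E_k$. Plancherel then reduces both estimates to the scalar bounds $|i\xi+\tau+k+d-2|\ge\max\{|\xi|,\tau,k+d-2\}$ and $|i\xi+\tau-k|\ge|\tau-k|$. For this weight the two arguments encode the same identity: for real $b$, $\|(\partial_\ell+b)f_k\|_2^2=\|\partial_\ell f_k\|_2^2+b^2\|f_k\|_2^2$, because $\operatorname{Re}\langle\partial_\ell f_k,f_k\rangle=0$. That is your symbol bound written in physical space.

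Each approach buys something different. Yours gives explicit constants and makes transparent why non-resonance is needed and why $L^-$ gives no gain: the symbol vanishes at $\xi=0$, $\tau=k$. The integration-by-parts version is the one that survives for a nonlinear weight $\tau\phi(\ell)$, as in \cite{DZ18}. There the conjugated operator has $\ell$-dependent coefficients, so Fourier diagonalization in $\ell$ is unavailable. The cross term also no longer vanishes; it produces a term proportional to $\tau\int\phi''|f|^2$, whose sign is fixed by the convexity of the weight.

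Your caveat about $k=0$ is also right and worth keeping. On $E_0$ the sharp constant in \eqref{eq:Lminus-estimate} is $1/\tau$, so a $\tau$-independent $C$ requires $\operatorname{dist}(\tau,\mathbb N_0)>1/3$. This matches how the paper itself uses non-resonance in the proof of Lemma~\ref{lem:Lminus-Lp-L2}, where it writes $\tau=m+\theta$ with $m\in\mathbb N_0$ and $1/3<\theta<2/3$. It is harmless in the later applications, which only use $\tau>1$.
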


\subsection{Spherical multiplier estimate and the \(L^p-L^2\) estimate for \(L^-\)}
We now improve the \(L^2\)-estimate for \(L^-\) to an \(L^p\)-estimate. One of the crucial tools is Sogge's spectral projection estimate, which we also formulate for the finite sums of spherical harmonics. 

\begin{knlemma}
\label{lem:sogge-projection}
Let \(d\ge3\) and
\(\frac{2d}{d+2}\le p\le2\le q\le\frac{2d}{d-2}\).
In dimension \(d=2\), let \(1\le p\le2\le q\le\infty\).
Let \(I\subset\mathbb N_0\) be finite and let
\(\{a_k\}_{k\in I}\) be complex numbers satisfying
\(|a_k|\le1\). Then
\begin{equation}
\left\|\sum_{k\in I}a_kP_kf\right\|_{L^q(S^{d-1})}
\le C
\left(\sum_{k\in I}|a_k|(1+k)^{(d-2)/d}\right)^{
\frac d2(\frac1p-\frac1q)}
\|f\|_{L^p(S^{d-1})}.
\label{eq:sogge-finite-sum}
\end{equation}
The constant \(C\) depends only on \(d,p\), and \(q\).
In particular,
\begin{equation}
\|P_kf\|_{L^q(S^{d-1})}
\le C(1+k)^{\frac{d-2}{2}(\frac1p-\frac1q)}
\|f\|_{L^p(S^{d-1})}.
\label{eq:sogge-single-projection}
\end{equation}
\end{knlemma}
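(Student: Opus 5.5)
The plan is to prove the bound at the four corners of the admissible square of exponents $(1/p,1/q)$ and then interpolate. Write $T=T_a=\sum_{k\in I}a_kP_k$ and $S=\sum_{k\in I}|a_k|(1+k)^{(d-2)/d}$. The exponent $\frac d2(\frac1p-\frac1q)$ is an affine function of $(1/p,1/q)$. It equals $1$ at $(p,q)=(\frac{2d}{d+2},\frac{2d}{d-2})$, equals $\frac12$ at $(2,\frac{2d}{d-2})$ and at $(\frac{2d}{d+2},2)$, and equals $0$ at $(2,2)$. So once the bound $\|T\|_{L^p\to L^q}\le CS^{\frac d2(\frac1p-\frac1q)}$ is known at these four points, Riesz--Thorin interpolation over the square gives it everywhere, with a constant depending only on $d,p,q$. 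Two remarks are needed to make this work. Each of the four corner bounds will hold with $a_k$ fixed. An affine exponent interpolates exactly, since $S^{\theta e_0+(1-\theta)e_1}=(S^{e_0})^\theta(S^{e_1})^{1-\theta}$.

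\emph{Corner $(2,2)$.} The $P_k$ are orthogonal projections onto mutually orthogonal eigenspaces and $|a_k|\le1$, so $\|Tf\|_2\le\|f\|_2$. This is the bound $S^0$.

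\emph{Corner $(\frac{2d}{d+2},\frac{2d}{d-2})$, $d\ge3$.} Put $n=d-1$. Then $\frac{2d}{d-2}=\frac{2(n+1)}{n-1}$ is exactly Sogge's Stein--Tomas exponent on $S^n$, and Sogge's estimate \cite{Sog86} gives $\|P_kf\|_{L^{2d/(d-2)}}\le C(1+k)^{\frac{n-1}{2(n+1)}}\|f\|_2=C(1+k)^{\frac{d-2}{2d}}\|f\|_2$. Since $P_k=P_kP_k^*$, the $TT^*$ argument gives $\|P_k\|_{L^{2d/(d+2)}\to L^{2d/(d-2)}}\le C(1+k)^{(d-2)/d}$. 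The triangle inequality then yields $\|Tf\|_{2d/(d-2)}\le C\,S\,\|f\|_{2d/(d+2)}$, which is the required power $S^1$.

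\emph{Corners $(2,\frac{2d}{d-2})$ and $(\frac{2d}{d+2},2)$.} Here I use $TT^*=\sum_{k\in I}|a_k|^2P_k$. By the previous corner and $|a_k|^2\le|a_k|$, its $L^{2d/(d+2)}\to L^{2d/(d-2)}$ norm is at most $CS$. Hence $\|T\|_{L^2\to L^{2d/(d-2)}}\le CS^{1/2}$. The other corner follows by duality, since $T^*$ has the same form with coefficients $\bar a_k$. This step is the one place where some care is needed: the naive triangle inequality would lose the square root, and the $TT^*$ trick is what produces the exact exponent $\frac12$.

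\emph{Dimension $d=2$.} The sphere is $S^1$, and $P_k$ for $k\ge1$ is convolution with $\pi^{-1}\cos(k\theta)$, so $\|P_k\|_{L^1\to L^\infty}\le C$ uniformly in $k$. The weight $(1+k)^{(d-2)/d}$ equals $1$, and the triangle inequality gives the bound $CS=C\sum_{k\in I}|a_k|$ at $(1,\infty)$, where the exponent is $\frac d2\cdot1=1$. The $TT^*$ argument gives $S^{1/2}$ at $(1,2)$ and $(2,\infty)$, orthogonality handles $(2,2)$, and interpolation over $[\frac12,1]\times[0,\frac12]$ finishes the proof. Finally, the single-projection bound \eqref{eq:sogge-single-projection} is the special case $I=\{k\}$, $a_k=1$. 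There $S=(1+k)^{(d-2)/d}$, and $S^{\frac d2(\frac1p-\frac1q)}=(1+k)^{\frac{d-2}{2}(\frac1p-\frac1q)}$.
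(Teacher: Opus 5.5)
Your proof is correct: the four corner bounds all concern the same fixed operator. They are orthogonality at $(2,2)$; Sogge's $L^2\to L^{2d/(d-2)}$ bound with $TT^*$ and the triangle inequality at the Stein--Tomas corner; $TT^*=\sum|a_k|^2P_k$ with $|a_k|^2\le|a_k|$ at the two mixed corners; and the explicit kernel $\pi^{-1}\cos(k\theta)$ on $S^1$ when $d=2$. Since the exponent $\frac d2(\frac1p-\frac1q)$ is affine in $(1/p,1/q)$, Riesz--Thorin yields exactly the stated power of $S$ on the whole square. The paper gives no proof beyond calling the estimate a standard consequence of Sogge's spectral cluster estimates \cite{Sog86} and pointing to \cite{D20}, and your argument is precisely that standard derivation.
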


The estimate \eqref{eq:sogge-finite-sum} is a standard consequence of Sogge's spectral
cluster estimates.  A more general estimate is also proved in \cite{D20}. 
Using the above Lemma, following the scheme of Jerison's proof in \cite{Jer86}, and carefully tracking the power of $\tau$,  one obtains 
\begin{lemma}
\label{lem:Lminus-Lp-L2}
Let
\(
    \frac{2d}{d+2}\le p\le2,
\) for $d\ge 3$ and $1<p\le 2$ for $d=2$.
Assume that \(\tau>0\) is non-resonant.
Then, for every \(w\in C^\infty_0(\mathbb R\times S^{d-1})\),
\begin{equation}
    \|e^{-\tau\ell}w\|_{L^2(d\ell d\omega)}
    \le
    C\tau^{\beta_0}
    \|e^{-\tau\ell}L^-w\|_{L^p(d\ell d\omega)},
    \qquad
    \beta_0=\frac{(d-2)(2-p)}{4p}.
    \label{eq:Lminus-Lp-L2}
\end{equation}
\end{lemma}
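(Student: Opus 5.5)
Since \(L^-\) acts diagonally on each spherical eigenspace, the plan is to invert it explicitly mode by mode and then estimate the resulting operator. Conjugating by the weight, set \(g=e^{-\tau\ell}w\). Then \(e^{-\tau\ell}L^-w=(\partial_\ell+\tau+\tfrac{d-2}{2}-\mathcal N)g\), and on \(E_k\) the multiplier is \(\partial_\ell+(\tau-k)\). Writing \(f=e^{-\tau\ell}L^-w\) and \(f_k=P_kf\), each mode is recovered by the one-dimensional convolution
\[
g_k(\ell)=\int_{\mathbb R}K_k(\ell-s)\,f_k(s)\,ds ,
\]
where \(K_k(r)=e^{-(\tau-k)r}\mathbf 1_{r>0}\) for \(k<\tau\) and \(K_k(r)=-e^{(k-\tau)r}\mathbf 1_{r<0}\) for \(k>\tau\). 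Because \(w\) has compact support, these choices reproduce \(g\). Non-resonance gives \(|\tau-k|\ge \tfrac13\) for all \(k\), so every kernel decays exponentially at rate \(|\tau-k|\). The target is an \(L^p_\ell L^p_\omega\to L^2_\ell L^2_\omega\) bound for \(f\mapsto\sum_k g_k\).

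The first step is to take the \(L^2(d\omega)\) norm in \(\omega\) for fixed \(\ell\) and use orthogonality:
\[
\|g(\ell,\cdot)\|_{L^2(S^{d-1})}^2=\sum_k\Bigl|\int K_k(\ell-s)P_kf(s)\,ds\Bigr|^2 .
\]
To handle this, I would use a \(TT^*\) argument. Write \(T\) for the map \(f\mapsto g\). Then \(TT^*\) has operator kernel \(\sum_k (K_k*\widetilde K_k)(\ell-s)P_k\), and \(|K_k*\widetilde K_k|(r)\lesssim |\tau-k|^{-1}e^{-|\tau-k||r|}\). The projection \(P_k\) maps \(L^p(S^{d-1})\to L^{p'}(S^{d-1})\) with norm \(\lesssim (1+k)^{(d-2)(\frac1p-\frac12)}\); this is \eqref{eq:sogge-single-projection} applied in its \(TT^*\) form with \(q=p'\). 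Summing over \(k\), the operator norm of the kernel at separation \(r\) is bounded by
\[
m(r)=\sum_k (1+k)^{(d-2)(\frac1p-\frac12)}\,|\tau-k|^{-1}e^{-|\tau-k||r|}.
\]
Rather than summing naively, I would group the modes into dyadic shells in \(|k-\tau|\) and apply the finite-sum estimate \eqref{eq:sogge-finite-sum} to each shell. The reason is that for \(|r|\ll 1/\tau\) a naive sum overcounts the \(k\gg\tau\) modes, while the finite-sum estimate handles shells efficiently. For \(k\) near \(\tau\), the weight \((1+k)^{(d-2)(\frac1p-\frac12)}\) is comparable to \(\tau^{(d-2)(\frac1p-\frac12)}=\tau^{2\beta_0}\). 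Then Young's/Hardy–Littlewood–Sobolev in \(\ell\) gives
\[
\|TT^*\|_{L^pL^p\to L^{p'}L^{p'}}\lesssim \|m\|_{L^{p'/2}_r},
\]
and hence \(\|T\|^2\lesssim \|m\|_{L^{p'/2}}\). The expected outcome is \(\|T\|\lesssim\tau^{\beta_0}\).

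The main obstacle is verifying that the power of \(\tau\) comes out exactly \(\beta_0=\frac{(d-2)(2-p)}{4p}\), with no logarithmic loss from the modes \(k\) far from \(\tau\). The modes \(k\ge2\tau\) behave like the Jerison–Kenig endpoint, which is homogeneous and scale-free. I would handle them by comparison with the \(\tau=0\) case, or alternatively by interpolation: at \(p=2\) the bound is just \eqref{eq:Lminus-estimate}, with \(\beta_0=0\), and at \(p=\frac{2d}{d+2}\) it is Jerison's estimate in \cite{Jer86} with explicit \(\tau^{\frac{d-2}{2d}}\). Since \(\beta_0\) is linear in \(1/p\) — it equals \(\frac{d-2}{2}\bigl(\frac1p-\frac12\bigr)\) — Riesz–Thorin interpolation between these two endpoints yields exactly \(\tau^{\beta_0}\). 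I would use this interpolation as the primary route: the only real work is then the endpoint bound, following Jerison's scheme with Lemma \ref{lem:sogge-projection}. The case \(d=2\) has \(\beta_0=0\), and there the claim follows from \eqref{eq:Lminus-estimate} together with a direct \(L^p\) Young bound for the one-dimensional kernels.
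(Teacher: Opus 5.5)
Your route works and differs from the paper's, but you stop at an ``expected outcome'' exactly where the work is. The logarithmic loss you worry about also does not occur.

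\textbf{Closing your $T^*T$ computation.} The operator you form is $T^*T\colon L^p\to L^{p'}$ (not $TT^*$). Its kernel is $\sum_k(\widetilde K_k*K_k)(\ell-s)P_k$, with $|\widetilde K_k*K_k|(r)=(2|\tau-k|)^{-1}e^{-|\tau-k||r|}$. By \eqref{eq:sogge-single-projection}, $\|P_k\|_{L^p\to L^{p'}}=\|P_k\|_{L^p\to L^2}^2\lesssim(1+k)^{2\beta_0}$. The naive sum $m(r)$ already suffices, so no dyadic shells are needed.
\begin{itemize}
\item The modes $k<2\tau$ give $m_{\mathrm{low}}(r)\lesssim\tau^{2\beta_0}\bigl(1+\log^+(1/|r|)\bigr)e^{-c|r|}$, whose $L^{p'/2}$ norm is $\lesssim\tau^{2\beta_0}$.
\item The modes $k\ge2\tau$ give $m_{\mathrm{high}}(r)\lesssim\sum_{k\ge2\tau}k^{2\beta_0-1}e^{-k|r|/2}$. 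This is $\lesssim|r|^{-2\beta_0}$ for $|r|\le1$ and $\lesssim e^{-c|r|}$ for $|r|\ge1$, with no power of $\tau$.
\end{itemize}
For $\frac{2d}{d+2}<p<2$ one has $\beta_0p'<1$, so Young's inequality applies. At $p=\frac{2d}{d+2}$ one has $2\beta_0=\frac{d-2}{d}$, which is exactly the one-dimensional Hardy--Littlewood--Sobolev exponent for $L^{p}\to L^{p'}$. The case $p=2$ is \eqref{eq:Lminus-estimate}. Hence $\|Tf\|_2^2=\langle T^*Tf,f\rangle\lesssim\tau^{2\beta_0}\|f\|_p^2$ on the whole range, and neither Jerison's endpoint nor interpolation is needed.

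\textbf{Interpolation and $d=2$.} Your Riesz--Thorin step is correct: $\beta_0=\frac{d-2}{2}(\frac1p-\frac12)$ is linear in $1/p$, and $(L^-_\tau)^{-1}$ is a fixed operator of $L^2$ norm at most $3$. On its own, though, it only moves all the content to the endpoint, which you leave unproved. For $d=2$, your sentence does not say how the modes are summed. The same $T^*T$ computation works there: $\|P_k\|_{L^p(S^1)\to L^{p'}(S^1)}\le C$, and $m$ has only a logarithmic singularity, which lies in $L^{p'/2}$ for every $p>1$. Alternatively, combine per-mode Young bounds with Hausdorff--Young on $S^1$ and H\"older in $k$.

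\textbf{Comparison with the paper.} The paper avoids duality. It applies Minkowski's inequality in $s$ directly to $\widetilde w(\ell)=\int\sum_kS_k(\ell,s)P_kg(s)\,ds$. It bounds the angular operator $\sum_kS_k(\ell,s)P_k\colon L^p(S^{d-1})\to L^2(S^{d-1})$ by the finite-sum estimate \eqref{eq:sogge-finite-sum}. This gives a kernel bounded by $C\tau^{\beta_0}$ times $|h|^{-\sigma}$ for $|h|\le1$ and $e^{-c|h|}$ for $|h|\ge1$, where $\sigma=(d-1)(\frac1p-\frac12)$. It then uses Young from $L^p$ to $L^2$ when $d(\frac1p-\frac12)<1$, and HLS at the endpoint, where $\sigma=1-\frac1d$.
\begin{itemize}
\item The paper's argument is one $L^p\to L^2$ convolution estimate, and the same template is reused for the $L^p\to L^q$ bound in Proposition~\ref{CarL+L-pq}.
\item Your $T^*T$ version needs only single-projection bounds, because $\widetilde K_k*K_k$ carries the extra factor $|\tau-k|^{-1}$ that makes the mode sum locally integrable in $r$.
\item Your version also shows clearly that the factor $\tau^{2\beta_0}$ comes only from the near-resonant modes, which have a logarithmic singularity. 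The scale-free high modes carry the critical singularity.
\end{itemize}
Both arguments become critical, and need HLS, at the same endpoint $p=\frac{2d}{d+2}$.
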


 For \(\tau>0\), define the conjugated operators
\(  L^\pm_\tau=e^{-\tau\ell}L^\pm e^{\tau\ell}.
\)
Then
\[
    L^+_\tau
    =
    \partial_\ell+\tau+\frac{d-2}{2}+\mathcal N, \quad 
    L^-_\tau
    =
    \partial_\ell+\tau+\frac{d-2}{2}-\mathcal N .
\]
Set
\(
    \widetilde{w}=e^{-\tau\ell}w.
\)
Then
\(
    e^{-\tau\ell}L^\pm w=L^\pm_\tau \widetilde{w}.
\)

\begin{proof}[Proof of Lemma \ref{lem:Lminus-Lp-L2}]
For \(p=2\), this is precisely the second estimate in
Lemma~\ref{lem:first-order-factors}. 
 We therefore assume that \(p<2\) and set
\[
\alpha=\frac d2\left(\frac1p-\frac12\right),\qquad
a=\frac{d-2}{d}.
\]
Then
$\beta_0=a\alpha$.
Set
$g=L^-_\tau\ww=e^{-\tau\ell}L^-w.$
Decompose \(\ww\) into spherical harmonics,
\(\ww=\sum_{k\ge0}\ww_k\), where \(\ww_k=P_k\ww\). Then
\[
P_kg=(\partial_\ell+\tau-k)\ww_k.
\]
Solving this ODE, we obtain
\begin{equation*}
\ww_k(\ell,\omega)
=
\int_{\mathbb R}
S_k(\ell,s)P_kg(s,\omega)\,ds,
\quad
{\text{where}}\quad
S_k(\ell,s)
=
\begin{cases}
e^{-(\tau-k)(\ell-s)}
\mathbf 1_{\{s<\ell\}},
& k<\tau,\\[2mm]
-e^{-(k-\tau)(s-\ell)}
\mathbf 1_{\{s>\ell\}},
& k>\tau.
\end{cases}
\end{equation*}
In particular,
\(
|S_k(\ell,s)|
\le e^{-|\tau-k||\ell-s|}
\le1.
\)

Fix \(\ell,s\),  applying
Lemma~\ref{lem:sogge-projection} with \(q=2\) to the partial sums, gives
\begin{equation}\label{eq:normsK}
\left\|
\sum_{k=0} S_k(\ell,s)P_kf
\right\|_{L^2(S^{d-1})}
\le
C K_{\tau,p}(\ell-s)
\|f\|_{L^p(S^{d-1})},
\end{equation}
where
\[
K_{\tau,p}(h)
=
\left(
\sum_{k\ge0}
e^{-|\tau-k||h|}(1+k)^a
\right)^\alpha.
\]
We claim that
\begin{equation}\label{eq:Kest}
K_{\tau,p}(h)
\le
C(1+\tau)^{\beta_0}
\begin{cases}
|h|^{-\sigma},&0<|h|\le1,\\
e^{-c|h|},&|h|\ge1,
\end{cases}
\qquad
\sigma=(1+a)\alpha.
\end{equation}

Suppose first that \(0<|h|\le1\). Since
$1+k\le C\bigl(1+\tau+|k-\tau|\bigr),$
we have
\[
\sum_{k\ge0}e^{-|\tau-k||h|}(1+k)^a
\le
C(1+\tau)^a
\sum_{k\ge0}e^{-|\tau-k||h|}+
C\sum_{k\ge0}
e^{-|\tau-k||h|}|k-\tau|^a.
\]
Write \(\tau=m+\theta\), where \(m\in\mathbb N_0\). By
non-resonance condition, \(1/3<\theta<2/3\). 
Then
\[
\sum_{k\ge0}e^{-|\tau-k||h|}
\le
C\sum_{j\ge0}e^{-(j+1/3)|h|}
\le C|h|^{-1},
\]
and, by comparison with the corresponding integral,
\[
\sum_{k\ge0}
e^{-|\tau-k||h|}|k-\tau|^a
\le
C\sum_{j\ge0}
e^{-(j+1/3)|h|}(j+1)^a
\le C|h|^{-1-a}.
\]
It follows that
\[
\sum_{k\ge0}e^{-|\tau-k||h|}(1+k)^a
\le
C(1+\tau)^a|h|^{-1}
+C|h|^{-1-a}
\le
C(1+\tau)^a|h|^{-1-a}.
\]
Raising this inequality to the power \(\alpha\), and using
\(a\alpha=\beta_0\) and \((1+a)\alpha=\sigma\), gives
\[
K_{\tau,p}(h)
\le
C(1+\tau)^{\beta_0}|h|^{-\sigma}.
\]

If \(|h|\ge1\), the same decomposition into the two sequences gives
\[
\sum_{k\ge0}e^{-|\tau-k||h|}(1+k)^a
\le
C\sum_{j\ge0}
e^{-(j+1/3)|h|}
\bigl((1+\tau)^a+(j+1)^a\bigr)\le
C(1+\tau)^a e^{-c|h|}.
\]
This proves the claim.

Let $G(s)=\|g(s,\cdot)\|_{L^p(S^{d-1})}.$
Then, applying  Minkowski's inequality to the integral representation of $\ww_k$, and using \eqref{eq:normsK} and \eqref{eq:Kest},
we obtain
\[
\|\ww(\ell,\cdot)\|_{L^2(S^{d-1})}
\le
C(1+\tau)^{\beta_0}
\int_{\mathbb R}\kappa_p(\ell-s)G(s)\,ds,
\]
where
\(
\kappa_p(h)
=
|h|^{-\sigma}\mathbf 1_{\{|h|\le1\}}
+
e^{-c\alpha|h|}\mathbf 1_{\{|h|>1\}}.
\)

Assume first that $d\left(\frac1p-\frac12\right)<1$.
This includes $\frac{2d}{d+2}<p<2$ when $d\ge3,$
and the whole range \(1<p<2\) when \(d=2\).
Let \(r\) be defined by
\(
\frac1r+\frac1p=\frac32.\)
Then
$\sigma r<1$ and 
we have \(\kappa_p\in L^r(\mathbb R)\). Young's convolution
inequality therefore gives
\[
\|\ww\|_{L^2(d\ell d\omega)}
\le
C(1+\tau)^{\beta_0}
\|g\|_{L^p(d\ell d\omega)}.
\]

It remains to consider the endpoint
$p=p_0=\frac{2d}{d+2}$ for $d\ge 3$.
In this case
$
\sigma=\frac{d-1}{d},
$
and the preceding kernel estimate gives
\[
\kappa_{p_0}(h)
\le C|h|^{-(d-1)/d}
=C|h|^{-1+1/d}.
\]
Since
\(
\frac1{p_0}-\frac12=\frac1d,
\)
the one-dimensional Hardy--Littlewood--Sobolev inequality gives
\[
\|\ww\|_{L^2(d\ell d\omega)}
\le
C(1+\tau)^{\beta_0}
\|g\|_{L^{p_0}(d\ell d\omega)}.
\]

Finally,
\(
(1+\tau)^{\beta_0}\le C\tau^{\beta_0}.
\)
Recalling that
\(
\ww=e^{-\tau\ell}w\) and 
\(g=e^{-\tau\ell}L^-w.
\)
we obtain
the required estimate.
\end{proof}

\subsection{Carleman inequalities for Laplacian}
Thanks to the Carleman estimates for $L^+$ and $L^-$, we are able to show the $L^p-L^2$ type estimates for Laplace operators.
\begin{proposition}
    Let $\frac{2d}{d+2} \leq p \le 2$ for $d\ge 3$ and $1<p\le 2$ for $d=2$. 
There exists a constant $C = C\pr{d,p}$  such that for any $v\in C^{\iny}_{0}\pr{\mathbb R^d\setminus\{0\} }$ and non-resonant $\tau>1$, one has
\begin{equation}
\tau^{\be_1} \norm{  |x|^{-\tau}v}_{L^2(|x|^{-d}dx)}
+ \tau^{\be_1-1} \norm{   |x|^{1-\tau} \gr v}_{L^2(|x|^{-d}dx)} 
\le C \norm{  |x|^{2-\tau} \LP v}_{L^p(|x|^{-d} dx)} ,
\label{kao-1}
\end{equation}
where $\be_1 =1- \frac{\pr{d-2}\pr{2-p}}{4p}$. 
\label{pro2}
\end{proposition}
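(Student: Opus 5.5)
We pass to logarithmic polar coordinates $x=e^\ell\omega$ and use the factorization $e^{2\ell}\Delta=L^+L^-=L^-L^+$ from \eqref{laplace}. Set $w=v(e^\ell\omega)$. Then
\[
\||x|^{-\tau}v\|_{L^2(|x|^{-d}dx)}=\|e^{-\tau\ell}w\|_2,\qquad
\||x|^{2-\tau}\Delta v\|_{L^p(|x|^{-d}dx)}=\|e^{-\tau\ell}L^+L^-w\|_p .
\]
For the gradient, $|x||\nabla v|^2=|\partial_\ell w|^2+\sum_j|\Omega_jw|^2$ up to dimensional constants. Hence
\[
\||x|^{1-\tau}\nabla v\|_{L^2(|x|^{-d}dx)}\le C\Bigl(\|e^{-\tau\ell}\partial_\ell w\|_2+\sum_j\|e^{-\tau\ell}\Omega_jw\|_2\Bigr).
\]

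The plan is to chain the two first-order estimates. First apply Lemma~\ref{lem:first-order-factors}(i) to the function $L^-w$, which is in $C^\infty_0$ since $\mathcal N$ preserves smoothness and support in $\ell$. This gives
\[
\tau\|e^{-\tau\ell}L^-w\|_2\le C\|e^{-\tau\ell}L^+L^-w\|_2 .
\]
That is an $L^2\to L^2$ bound, whereas we need $L^p$ on the right. So instead we reverse the order and write $e^{2\ell}\Delta=L^-L^+$. Applying Lemma~\ref{lem:Lminus-Lp-L2} to $L^+w$ gives
\[
\|e^{-\tau\ell}L^+w\|_2\le C\tau^{\beta_0}\|e^{-\tau\ell}L^-L^+w\|_p=C\tau^{\beta_0}\||x|^{2-\tau}\Delta v\|_{L^p(|x|^{-d}dx)} .
\]
Then Lemma~\ref{lem:first-order-factors}(i) applied to $w$ yields
\[
\tau\|e^{-\tau\ell}w\|_2+\|e^{-\tau\ell}\partial_\ell w\|_2+\sum_j\|e^{-\tau\ell}\Omega_jw\|_2\le C\|e^{-\tau\ell}L^+w\|_2 .
\]
Combining the two and dividing by $\tau^{\beta_0}$ gives
\[
\tau^{1-\beta_0}\||x|^{-\tau}v\|_{L^2(|x|^{-d}dx)}+\tau^{-\beta_0}\||x|^{1-\tau}\nabla v\|_{L^2(|x|^{-d}dx)}\le C\||x|^{2-\tau}\Delta v\|_{L^p(|x|^{-d}dx)} .
\]
Since $\beta_1=1-\beta_0$ with $\beta_0=\frac{(d-2)(2-p)}{4p}$, this is exactly \eqref{kao-1}. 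The same argument works for $d=2$, where $\beta_0=0$ and the restriction $1<p\le2$ matches Lemma~\ref{lem:Lminus-Lp-L2}.

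The step needing care is justifying that $L^+w$ is an admissible input for Lemma~\ref{lem:Lminus-Lp-L2}. Because $\mathcal N$ is nonlocal on the sphere, $L^+w$ is smooth and compactly supported in $\ell$ but not literally a finite harmonic sum. I would handle this by noting that the proof of Lemma~\ref{lem:Lminus-Lp-L2} only uses the spherical-harmonic ODE representation $P_kg=(\partial_\ell+\tau-k)\widetilde w_k$ with $\widetilde w_k$ compactly supported in $\ell$, together with convergence in $L^2$. Both hold for $L^+w$, since $P_kL^+w=(\partial_\ell+k+d-2)P_kw$ is compactly supported in $\ell$ and rapidly decaying in $k$. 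Alternatively one can approximate by truncations $\sum_{k\le K}P_kw$ and pass to the limit.

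One also has to check that commuting $L^+$ and $L^-$ is legitimate on these functions. This follows from the diagonal action of both operators on each $E_k$.
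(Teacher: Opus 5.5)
Your proposal is correct and matches the paper's proof: write $e^{2\ell}\Delta=L^-L^+$, apply Lemma~\ref{lem:Lminus-Lp-L2} to $L^+w$, then Lemma~\ref{lem:first-order-factors}(i) to $w$, and use $\beta_1-1=-\beta_0$. Two small remarks: the pointwise identity should read $|x|^2|\nabla v|^2=|\partial_\ell w|^2+\sum_j|\Omega_jw|^2$ (you wrote $|x||\nabla v|^2$), and the admissibility worry is unnecessary, since $\mathcal N$ maps smooth functions on the sphere to smooth functions and acts only in $\omega$, so $L^+w\in C^\infty_0(\mathbb R\times S^{d-1})$ already.
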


\begin{proof}
Let $v\in C^{\infty}_{0}\pr{\mathbb R^d\setminus\set{0} }$ and set \(w(\ell,\omega)=v(e^\ell\omega)\).
Applying inequality \eqref{eq:Lplus-estimate} to \(w\) and then \eqref{eq:Lminus-Lp-L2}   to $L^+w$, we obtain
\begin{align*}
\tau \norm{ e^{-\tau \ell}w}_{L^2(d\ell d\omega )}
&+\norm{e^{-\tau \ell} \partial_\ell w}_{L^2(d\ell d\omega )}
+\sum_{i=1}^d \norm{e^{-\tau \ell} \Omega_i w }_{L^2(d\ell d\omega )}   \\
&\leq C\norm{e^{-\tau \ell} L^+w}_{L^2(d\ell d\omega )} 
\le C\tau^{ \frac{\pr{d-2}\pr{2-p}}{4p}}\norm{
e^{-\tau \ell} L^- L^+w}_{L^p(d\ell d\omega )}.
\end{align*}
Returning to Euclidean coordinates and noticing that 
\[|x||\nabla v|\le C\left(|\partial_\ell w|+\sum_{i=1}^d|\Omega_i w|\right),\]
we obtain the required inequality.
\end{proof}

Now we formulate a $L^p-L^q$ type Carleman estimate for the operator $\LP$.

\begin{proposition}
Let $\frac{2d}{d+2} \le p \le 2 \le q \le \frac{2d}{d-2}$ be such that $\frac 1 p - \frac 1 q \leq \frac 2 d$ for $d\ge 3$ and $1<p\le 2\le q\le \infty$ for $d=2$.
There exists a constant $C = C\pr{d,p,q}$ such that for any $v\in C^{\infty}_0 \pr{\mathbb R^d\setminus\set{0} }$ and non-resonant $\tau>1$ it holds that
\begin{equation}
\tau^{\hat{\be}_0}\norm{  |x|^{-\tau}v}_{L^q(|x|^{-d}dx)}
\le  C  \norm{|x|^{2-\tau} \LP v}_{L^p(|x|^{-d} dx)},
\label{key}
\end{equation}
where $\hat{\be}_0 = 1 - \frac{d}{2}\pr{\frac 1 p - \frac 1 q}$. 
\label{CarL+L-pq}
\end{proposition}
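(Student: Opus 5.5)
The plan is to deduce \eqref{key} by complex interpolation from estimates already available at the corners of the admissible $(1/p,1/q)$ square. A direct kernel computation looks harder to close.

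\emph{Reformulation as an operator bound.} Work in the logarithmic coordinates $x=e^\ell\omega$ used above, with $\ww=e^{-\tau\ell}w$ and $g=e^{-\tau\ell}e^{2\ell}\Delta v=L^+_\tau L^-_\tau\ww$. Since $\tau$ is non-resonant, $L^-_\tau$ and $L^+_\tau$ are invertible on each spherical-harmonic sector. Concretely, $P_k\ww=\int_{\mathbb R}G_k(\ell-s)P_kg(s)\,ds$, where $G_k$ is the convolution of the two exponential kernels $S_k$ from the proof of Lemma~\ref{lem:Lminus-Lp-L2} and the kernel $e^{-(\tau+k+d-2)h}\mathbf 1_{\{h>0\}}$ of $(L^+_\tau)^{-1}$. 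This defines one linear operator $T_\tau\colon g\mapsto\ww$, independent of $p$ and $q$, and \eqref{key} is equivalent to
\[
\|T_\tau\|_{L^p(d\ell d\omega)\to L^q(d\ell d\omega)}\le C\tau^{-\hat\beta_0(p,q)} .
\]
This holds on $C^\infty_0$ data, and hence on the dense class needed for interpolation.

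\emph{Corner estimates.} For $d\ge3$, the hypothesis $\frac1p-\frac1q\le\frac2d$ is automatic in the square $[\frac12,\frac12+\frac1d]\times[\frac12-\frac1d,\frac12]$, since the line $\frac1p-\frac1q=\frac2d$ meets it only at $(p_0,q_0)=(\frac{2d}{d+2},\frac{2d}{d-2})$. So it suffices to bound $T_\tau$ at the four corners.
\begin{itemize}
\item At $(2,2)$, Lemma~\ref{lem:first-order-factors} gives the norm bound $C\tau^{-1}$.
\item At $(p_0,2)$, Proposition~\ref{pro2} gives $C\tau^{-\beta_1}$ with $\beta_1=\frac{d+2}{2d}\ge\frac12=\hat\beta_0(p_0,2)$.
\item At $(2,q_0)$, argue by duality. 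The adjoint of $T_\tau$ with respect to $d\ell\,d\omega$ is of the same form: the weight $e^{\tau\ell}$ replaces $e^{-\tau\ell}$, and the roles of $L^\pm$ are exchanged after $\ell\mapsto-\ell$. Running the argument of Lemma~\ref{lem:Lminus-Lp-L2} and Proposition~\ref{pro2} for that operator yields $\|T_\tau\|_{L^2\to L^{q_0}}\le C\tau^{-1/2}=C\tau^{-\hat\beta_0(2,q_0)}$, up to harmless non-resonance adjustments.
\item At $(p_0,q_0)$, the Jerison--Kenig estimate gives $O(1)$, which matches $\hat\beta_0(p_0,q_0)=0$.
\end{itemize}

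\emph{Interpolation.} The function $\hat\beta_0=1-\frac d2(\frac1p-\frac1q)$ is affine in $(1/p,1/q)$. At every corner the available exponent is at least the value of $\hat\beta_0$ there. Riesz--Thorin, applied first along the edges and then across the square, gives $\log\|T_\tau\|_{p\to q}$ as a convex combination of the corner values. This yields $\|T_\tau\|_{p\to q}\le C\tau^{-\hat\beta_0(p,q)}$ on the whole range, with $C=C(d,p,q)$ uniform in non-resonant $\tau>1$.

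For $d=2$ the square becomes $(\frac12,1)\times[0,\frac12]$. The corners $q=\infty$ and $p\to1$ are not available directly. There I would interpolate between $(2,2)$, the $L^p\to L^2$ bound of Proposition~\ref{pro2}, and an $L^2\to L^q$ bound for finite large $q$ obtained as at the $(2,q_0)$ corner. For $q=\infty$ I would add a direct kernel estimate using \eqref{eq:sogge-finite-sum} with $q=\infty$, since then $\sum_k e^{-|\tau-k||h|}$ is summable in $h$ after a small loss.

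\emph{Expected obstacle.} The main difficulty is the $(2,q_0)$ and $d=2$ endpoint bounds, that is, making the duality argument rigorous. One must check that the adjoint operator satisfies the hypotheses of Lemma~\ref{lem:first-order-factors} and Lemma~\ref{lem:Lminus-Lp-L2}, including the correct sign of the conjugation and non-resonance for the reflected parameter. One must also check that $T_\tau$ is genuinely the same operator on all the $L^p$ spaces, so that Riesz--Thorin applies. If duality proves awkward, the fallback is a direct estimate of $(L^+_\tau)^{-1}\colon L^2\to L^{q}$. This would use \eqref{eq:sogge-finite-sum} with $p=2$ and Young's inequality in $\ell$, applied to the kernel $\big(\sum_k e^{-(\tau+k)h}(1+k)^{a}\big)^{\frac d2(\frac12-\frac1q)}\mathbf 1_{\{h>0\}}$, together with the $L^2$ bound for $L^-_\tau$; at $q=q_0$ the $\ell$-integration would need Hardy--Littlewood--Sobolev instead of Young.
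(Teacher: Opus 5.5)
\textbf{Case $d\ge3$.} Your interpolation route is sound and differs from the paper's. The paper proves the interior range $\frac1p-\frac1q<\frac2d$ directly: it splits into $k<2\tau$ and $k\ge2\tau$, uses Sogge's multiplier bound and Young's inequality, and invokes Jerison--Kenig only on the line $\frac1p-\frac1q=\frac2d$. You instead use Jerison--Kenig as one corner and interpolate.

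The $(2,q_0)$ corner you worry about is harmless. On the $k$-th spherical sector, $T_\tau$ is convolution in $\ell$ with a real kernel $G_k$, so $T_\tau^*$ is convolution with $G_k(-\,\cdot)$. That is, $T_\tau^*=RT_\tau R$ with $(Rf)(\ell,\omega)=f(-\ell,\omega)$ and the same $\tau$. Hence $\|T_\tau\|_{L^2\to L^{q_0}}=\|T_\tau\|_{L^{p_0}\to L^2}\le C\tau^{-(d+2)/(2d)}$. This is better than the needed $\tau^{-1/2}$ and involves no new non-resonance condition. With the other three corners and the affinity of $\hat\beta_0$, Riesz--Thorin gives the statement for $d\ge3$.

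\textbf{Case $d=2$: the gap.} This case is part of the statement, and your proposal does not prove it. Proposition~\ref{pro2} gives $\tau^{-1}$ on the edge $\{q=2,\ 1<p\le2\}$, and duality gives $\tau^{-1}$ on $\{p=2,\ 2\le q<\infty\}$. But every point obtained by interpolating between these two edges satisfies $\frac1p-\frac1q<\frac12$. The statement requires all $\frac1p-\frac1q<1$, for example $(p,q)=(\frac54,10)$, and also $q=\infty$. Even $(2,\infty)$ is dual to an $L^1\to L^2$ bound that Proposition~\ref{pro2} does not cover.

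So you need an estimate near the far corner $(1,\infty)$, where there is no Jerison--Kenig endpoint to quote, and your one-line sketch does not supply it. If the kernel is that of $(L^-_\tau)^{-1}$ alone, then $\bigl(\sum_k e^{-|\tau-k||h|}\bigr)^{1/p}\sim|h|^{-1/p}$. This is not in $L^{p'}$ near $h=0$ for $p<2$ and has no decay in $\tau$. Young's inequality at $(p,\infty)$ therefore fails, and the power $\tau^{-(1-1/p)}=\tau^{-\hat\beta_0(p,\infty)}$ cannot appear.

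To get that power, you must do two things:
\begin{enumerate}
\item Keep the smoothing of $(L^+_\tau)^{-1}$ on the frequencies $k<2\tau$: convolve with $e^{-\tau h}\mathbf 1_{\{h>0\}}$ to obtain a kernel with $L^r$-norm $\lesssim\tau^{-1/r}$, where $1/r=1-\delta$.
\item Treat $k\ge2\tau$ separately, via $|K_k^+|\lesssim k^{-1}e^{-ck|h|}$ and a dyadic sum that uses $\hat\beta_0>0$.
\end{enumerate}
That is exactly the paper's proof. Once it is done at one far corner, it works directly at every $(p,q)$. So for $d=2$ the interpolation does not avoid the core estimate, and your proposal leaves that estimate unproved.
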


\begin{proof}
Let \(w(\ell,\omega)=v(e^\ell\omega)\), set
\(\ww=e^{-\tau\ell}w\), and define
\(F=e^{-\tau\ell}L^+L^-w\). Then
\(
L^+_\tau L^-_\tau\ww=F.
\) Set \(\delta=1/p-1/q\), \(\alpha=d\delta/2\),
\(\widehat\beta_0=1-\alpha\), and
\(\gamma=(d-2)\delta/2\). The case \(\delta=0\) follows from
Proposition~\ref{pro2}, so we assume that \(0<\delta<2/d\). 
As in the proof of Lemma~\ref{lem:Lminus-Lp-L2}, we decompose
\[
\ww=P^-_\tau\ww+P^+_\tau\ww,\qquad
P^-_\tau\ww=\sum_{k<2\tau}P_k\ww,\qquad
P^+_\tau\ww=\sum_{k\ge2\tau}P_k\ww.
\]

We first estimate \(P^-_\tau\ww\). Let
\(g=L^-_\tau\ww\). Since \(L^+_\tau g=F\), we have
\[
P_kg(y,\omega)
=\int_{-\infty}^y e^{-a_k(y-s)}P_kF(s,\omega)\,ds,
\qquad a_k=\tau+k+d-2.
\]
Using the kernel \(S_k\) from the proof of
Lemma~\ref{lem:Lminus-Lp-L2}, we obtain
\begin{equation}
P^-_\tau\ww(\ell,\omega)
=\int_{\mathbb R}\int_{-\infty}^y e^{-\tau(y-s)}
\sum_{k<2\tau}c_k(\ell,y,s)P_kF(s,\omega)\,ds\,dy,
\label{eq:low-frequency-representation}
\end{equation}
where
\(
c_k(\ell,y,s)
=S_k(\ell,y)e^{-(k+d-2)(y-s)}.
\)
Since \(s<y\), the estimate for \(S_k\) gives
\[
|c_k(\ell,y,s)|
\le e^{-|\tau-k||\ell-y|}\le1.
\]

For fixed \(\ell,y,s\), the \(p\)-\(q\) version of
Lemma~\ref{lem:sogge-projection} therefore gives
\begin{equation}
\left\|
\sum_{k<2\tau}c_k(\ell,y,s)P_kF(s,\cdot)
\right\|_{L^q(S^{d-1})}
\le
C\tau^\gamma H_\tau(\ell-y)
\|F(s,\cdot)\|_{L^p(S^{d-1})},
\label{eq:low-angular-multiplier}
\end{equation}
where
\[
H_\tau(h)
=
\left(
\sum_{k<2\tau}e^{-|\tau-k||h|}
\right)^\alpha.
\]
Indeed, \(1+k\le C\tau\) for \(k<2\tau\), and hence the factor
coming from the spherical multiplier estimate is
\[
\tau^{\frac{d-2}{d}\alpha}
=\tau^{\frac{d-2}{2}\delta}
=\tau^\gamma.
\]

Let \(G(s)=\|F(s,\cdot)\|_{L^p(S^{d-1})}\). Applying
\eqref{eq:low-angular-multiplier} in
\eqref{eq:low-frequency-representation} and using Minkowski's
inequality, we obtain
\begin{equation}
\|P^-_\tau\ww(\ell,\cdot)\|_{L^q(S^{d-1})}
\le
C\tau^\gamma
\int_{\mathbb R}\int_{-\infty}^y
H_\tau(\ell-y)e^{-\tau(y-s)}G(s)\,ds\,dy.
\label{eq:low-double-convolution}
\end{equation}

As in the proof of Lemma \ref{lem:Lminus-Lp-L2}, we have
\[
H_\tau(h)\le C|h|^{-\alpha}
\quad\text{for \(0<|h|\le1\),}
\qquad
H_\tau(h)\le Ce^{-c\alpha|h|}
\quad\text{for \(|h|\ge1\).}
\]
Since \(\alpha<1\), these estimates imply
\(\|H_\tau\|_{L^1(\mathbb R)}\le C\). Moreover, the sum defining
\(H_\tau\) contains at most \(C\tau\) terms, and hence
\(
\|H_\tau\|_{L^\infty(\mathbb R)}\le C\tau^\alpha.
\)

Set
\[
E_\tau(h)=e^{-\tau h}\mathbf 1_{\{h>0\}},
\qquad K_\tau=H_\tau*E_\tau.
\]
Then \(\|E_\tau\|_{L^1}=\tau^{-1}\), and therefore
\[
\|K_\tau\|_{L^1}\le C\tau^{-1},
\qquad
\|K_\tau\|_{L^\infty}\le C\tau^{\alpha-1}.
\]
Let \(r\) be defined by \(1/r=1-\delta\), so that
\(1+1/q=1/p+1/r\). Interpolation between the preceding
\(L^1\)- and \(L^\infty\)-bounds gives
\[
\|K_\tau\|_{L^r}
\le
C\tau^{-1/r+(\alpha-1)(1-1/r)}
\le C\tau^{-1/r},
\]
where we used \(\alpha<1\) and \(\tau>1\).

The right-hand side of \eqref{eq:low-double-convolution} is
\(C\tau^\gamma(K_\tau*G)(\ell)\). Young's inequality now gives
\[
\|P^-_\tau\ww\|_{L^q(d\ell d\omega)}
\le
C\tau^{\gamma-1/r}
\|F\|_{L^p(d\ell d\omega)}.
\]
Finally,
\[
\gamma-\frac1r
=
\frac{d-2}{2}\delta-(1-\delta)
=
\frac d2\delta-1
=
-\widehat\beta_0.
\]
Thus
\begin{equation}
\|P^-_\tau\ww\|_{L^q(d\ell d\omega)}
\le
C\tau^{-\widehat\beta_0}
\|F\|_{L^p(d\ell d\omega)}.
\label{eq:low-frequency-bound}
\end{equation}

We now estimate \(P^+_\tau\ww\). For \(k\ge2\tau\), we have
\(b_k=\tau-k<0\). Using the representation of \(g_k=P_kg\) above
and inverting \(\partial_\ell+b_k\), we obtain
\[
P_k\ww(\ell,\omega)
=\int_{\mathbb R}K_k^+(\ell,s)P_kF(s,\omega)\,ds,
\]
where
\[
K_k^+(\ell,s)
=-\int_{\max\{\ell,s\}}^\infty
e^{b_k(y-\ell)}e^{-a_k(y-s)}\,dy.
\]
Since \(a_k-b_k=2k+d-2\), this gives that
\[
K^+_k(\ell,s)
=-\frac{1}{2k+d-2}
\begin{cases}
e^{-a_k(\ell-s)},&s<\ell,\\
e^{-(k-\tau)(s-\ell)},&s\ge\ell,
\end{cases}
\]
where \(a_k=\tau+k+d-2\). Since \(k-\tau\ge k/2\) and
\(a_k\ge k\), it follows that
\[
|K^+_k(\ell,s)|\le Ck^{-1}e^{-ck|\ell-s|}.
\]

We decompose \(P^+_\tau\ww\) into dyadic blocks. For \(N\ge1\), set
\[
\tau_N=2^N\tau,\qquad
\ww^{(N)}=\sum_{\tau_N\le k<2\tau_N}P_k\ww,\qquad 
P^+_\tau\ww=\sum_{N\ge1}\ww^{(N)}.
\]
Then
\[
\ww^{(N)}(\ell,\omega)
=\int_{\mathbb R}
\sum_{\tau_N\le k<2\tau_N}
K^+_k(\ell,s)P_kF(s,\omega)\,ds.
\]

Recall that
\[
\gamma=\frac{d-2}{2}\delta
\]
and \(G(s)=\|F(s,\cdot)\|_{L^p(S^{d-1})}\). By the
single-projection estimate \eqref{eq:sogge-single-projection} and
the triangle inequality,
\[
\left\|
\sum_{\tau_N\le k<2\tau_N}
K^+_k(\ell,s)P_kF(s,\cdot)
\right\|_{L^q(S^{d-1})}
\le
C\sum_{\tau_N\le k<2\tau_N}
k^{\gamma-1}e^{-ck|\ell-s|}G(s).
\]
Since \(k\) is comparable to \(\tau_N\) on this block and the block
contains at most \(C\tau_N\) indices, the last expression is bounded
by
\(
C\tau_N^\gamma e^{-c\tau_N|\ell-s|}G(s).
\)
Consequently,
\[
\|\ww^{(N)}(\ell,\cdot)\|_{L^q(S^{d-1})}
\le
C\tau_N^\gamma
\int_{\mathbb R}e^{-c\tau_N|\ell-s|}G(s)\,ds.
\]

Recall that \(1/r=1-\delta\), so that
\(1+1/q=1/p+1/r\). Since
\(
\|e^{-c\tau_N|\cdot|}\|_{L^r(\mathbb R)}
\le C\tau_N^{-1/r},
\)
applying Young's inequality gives that
\[
\|\ww^{(N)}\|_{L^q(d\ell d\omega)}
\le
C\tau_N^{\gamma-1/r}
\|F\|_{L^p(d\ell d\omega)}.
\]
Moreover,
\[
\gamma-\frac1r
=
\frac{d-2}{2}\delta-(1-\delta)
=
\frac d2\delta-1
=
-\widehat\beta_0.
\]
Hence
\[
\|\ww^{(N)}\|_{L^q(d\ell d\omega)}
\le
C(2^N\tau)^{-\widehat\beta_0}
\|F\|_{L^p(d\ell d\omega)}.
\]

Since \(\widehat\beta_0>0\), summing over \(N\ge1\) yields that
\begin{equation}
\|P^+_\tau\ww\|_{L^q(d\ell d\omega)}
\le
C\tau^{-\widehat\beta_0}
\|F\|_{L^p(d\ell d\omega)}.
\label{eq:high-frequency-bound}
\end{equation}

Combining \eqref{eq:low-frequency-bound} and
\eqref{eq:high-frequency-bound}, we obtain
\[
\|\ww\|_{L^q(d\ell d\omega)}
\le
C\tau^{-\widehat\beta_0}
\|F\|_{L^p(d\ell d\omega)}.
\]
Since \(\ww=e^{-\tau\ell}w\),
\(w(\ell,\omega)=v(e^\ell\omega)\), and
\(L^+L^-w=e^{2\ell}\Delta v\), this is equivalent to
\[
\tau^{\widehat\beta_0}
\bigl\||x|^{-\tau}v\bigr\|_{L^q(|x|^{-d}\,dx)}
\le
C
\bigl\||x|^{2-\tau}\Delta v\bigr\|_{L^p(|x|^{-d}\,dx)}.
\]
This proves the proposition when
\(1/p-1/q<2/d\). For $d\ge 3$, the endpoint
\(1/p-1/q=2/d\) is the classical Jerison--Kenig estimate \cite{JK85}.
\end{proof}

To prove Theorem \ref{Carlpq}, we interpolate between Proposition \ref{pro2} at $q=2$ and Proposition \ref{CarL+L-pq} at $q = \frac{2d}{d-2}.$ 

\begin{proof}[Proof of Theorem \ref{Carlpq}]
To show Theorem \ref{Carlpq}, it suffices to show that
\[
\tau^\be \norm{ |x|^{-\tau} v}_{L^q(|x|^{-d}dx)}
\le  C\norm{ |x|^{2-\tau} \LP v}_{L^p(|x|^{-d} dx)}.
\]
For any $q \in \brac{2, \frac{2d}{d-2}}$, we express $q = 2 \lambda + \frac{2d}{d-2}\pr{1 - \lambda}$ for some $\lambda \in \brac{0,1}$. Thus, $\lambda=\frac{2d-(d-2)q}{4}.$
Combining H\"older's inequality, the estimates \eqref{kao-1} in Proposition \ref{pro2},  and the estimates \eqref{key} with $q = \frac{2d}{d-2}$ in Proposition \ref{CarL+L-pq}, we have
\begin{multline*}
    \norm{|x|^{-\tau}v}_{L^q(|x|^{-d}dx)} 
\le \norm{ |x|^{-\tau} v}_{L^2(|x|^{-d}dx)}^{\frac{2 \lambda}{q}} 
\norm{ |x|^{-\tau}v}_{L^{\frac{2d}{d-2}}(|x|^{-d}dx)}^{\frac{2d\pr{1-\lambda}}{\pr{d-2}q}} \\
\le \pr{C \tau^{-\be_1} \norm{ |x|^{2-\tau }  \LP v}_{L^p(|x|^{-d} dx)} }^{\frac{2 \lambda}{q}}
\pr{C \tau^{-\hat{\be}_0} \norm{ |x|^{2-\tau}  \LP v}_{L^p(|x|^{-d} dx)} }^{\frac{2d\pr{1-\lambda}}{\pr{d-2}q}} \\
= C \tau^{-\be_1\frac{2 \lambda}{q}-\hat{\be}_0{\frac{2d\pr{1-\lambda}}{\pr{d-2}q}}}  \norm{ |x|^{2-\tau} \LP v}_{L^p(|x|^{-d} dx)}.
\end{multline*}
From the definition of $\beta_1$ in Proposition \ref{pro2} and $\hat{\beta}_0$ in Proposition \ref{CarL+L-pq}, we  get  
\[\be_1\frac{2 \lambda}{q} + \hat{\be}_0{\frac{2d\pr{1-\lambda}}{\pr{d-2}q}} =\hat{\be}_0+(\beta_1-\hat{\be}_0) \frac{2 \lambda}{q}=\beta=\frac{2+d}{4}+\frac{2-d}{2p}+\frac{d}{p}\left(\frac{1}{q}-\frac{1}{2}\right).\] For $d=2$, we replace $L^{2d/(d-2)}$ by $L^\infty$. This concludes the proof of Theorem \ref{Carlpq}.
\end{proof}

 \appendix
 
 \section{Local boundedness and oscillation lemmas}
 \label{s:local-estimates}
In this appendix we record the local boundedness and oscillation estimates used in
Section~\ref{s:Remez}. The solution and the potential may be complex-valued, so we do
not use the maximum principle, Harnack's inequality, or the real-valued oscillation
\(\sup u-\inf u\). For a complex-valued function \(u\) and a set \(E\), we write
\(\operatorname{osc}^{\mathbb C}_E u:=\sup_{x,y\in E}|u(x)-u(y)|\).

\begin{lemma}
\label{lem:L2-Linfty-local}
Let \(d\ge2\), \(t>d/2\), and let \(u\in W^{1,2}(B_R)\)
be a complex-valued weak solution of \eqref{ePDE} in \(B_R\), where
\(V\in L^t(B_R)\). Then
\begin{equation}
\|u\|_{L^\infty(B_{R/2})}
\le CR^{-d/2}
\left(1+\left(R^{2-d/t}\|V\|_{L^t(B_R)}\right)^{\frac{2t}{2t-d}}\right)^{d/4}
\|u\|_{L^2(B_R)},
\label{eq:local-L2-Linfty}
\end{equation}
where the constant \(C\) depends only on \(d\) and \(t\). 
\end{lemma}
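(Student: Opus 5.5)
By scaling, I will first reduce to \(R=1\). Write \(M=\|V\|_{L^t(B_1)}\) and set \(\Lambda=M^{\frac{2t}{2t-d}}\). The factor \((1+\Lambda)^{d/4}\) equals \(\Lambda^{d/4}\) up to a constant when \(\Lambda\ge 1\). This suggests working at the natural length scale \(r_0=\min\{1/4,\Lambda^{-1/2}\}\), at which the scale-normalized norm \(r_0^{2-d/t}M\) is at most \(1\). On balls of radius \(r_0\) the potential is therefore small in the scale-invariant sense. A local estimate of the form
\[
\|u\|_{L^\infty(B_{r_0/2}(x))}\le C r_0^{-d/2}\|u\|_{L^2(B_{r_0}(x))}
\]
then yields the claim after covering \(B_{1/2}\) by such balls. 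Indeed, \(r_0^{-d/2}\le C(1+\Lambda)^{d/4}\) and \(\|u\|_{L^2(B_{r_0}(x))}\le\|u\|_{L^2(B_1)}\).

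It remains to prove this local estimate, again at unit scale, under the assumption \(\|V\|_{L^t(B_1)}\le 1\). I will use Moser iteration adapted to complex-valued functions, in the manner of Brezis--Kato \cite{BK79} and Kato's inequality \cite{Kato78}. For \(\beta\ge1\) and a cutoff \(\eta\), I test the equation with \(\eta^2 \bar u\,(|u|_N)^{2(\beta-1)}\), where \(|u|_N=\min\{|u|,N\}\) is a truncation, and take real parts. Writing \(w=\eta|u|^{\beta}\), this gives the energy inequality
\[
\int |\nabla w|^2\le C\beta^2\int |\nabla\eta|^2|u|^{2\beta}+C\beta\int|V|\,w^2 .
\]
The potential term is estimated by Hölder and interpolation. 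Since \(t>d/2\), we have \(2t'<2^*\), so
\[
\int |V|w^2\le \|V\|_{L^t}\|w\|_{L^{2t'}}^2\le \epsilon\|\nabla w\|_2^2+C\epsilon^{-\frac{d}{2t-d}}\|w\|_2^2 .
\]
Choosing \(\epsilon\sim\beta^{-1}\) produces a polynomial factor \(\beta^{C}\), which is harmless in the iteration. Combining with Sobolev gives the standard reverse Hölder step from \(L^{2\beta}\) to \(L^{2\beta\chi}\) with \(\chi=d/(d-2)\); any \(\chi>1\) works when \(d=2\). Iterating over shrinking radii \(r_k=\tfrac12+2^{-k-1}\) with \(\beta_k=\chi^k\) and summing the geometric logarithms then gives the \(L^\infty\)--\(L^2\) bound with a constant depending only on \(d,t\). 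Finally, I let \(N\to\infty\) by monotone convergence.

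I expect the main obstacle to be justifying the test function for a merely \(W^{1,2}\) complex-valued solution. There are two issues. First, one needs \(\operatorname{Re}\bigl(\nabla u\cdot\nabla(\bar u|u|_N^{2(\beta-1)})\bigr)\ge c|\nabla|u|^\beta|^2\)-type lower bounds. These follow from the pointwise identity \(\operatorname{Re}(\bar u\nabla u)=|u|\nabla|u|\) together with \(|\nabla u|\ge|\nabla|u||\), which is Kato's inequality. Second, one needs integrability of \(|V||u|^2|u|_N^{2\beta-2}\eta^2\) at each stage. This is guaranteed by the truncation together with the previous iteration step, using \(u\in L^{2^*}\) at the start. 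The precise power \(d/4\) in the statement arises only from the rescaling step, so the exact \(\beta\)-dependence of the constants in the Moser step is irrelevant.
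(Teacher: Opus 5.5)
Your proof is correct, but you get the dependence on the potential by a different mechanism than the paper. The paper does not rescale. It first uses Kato's inequality to obtain $-\Delta|u|\le |V||u|$. It then runs a single Moser iteration on $B_1$ with the potential at full size, tracking $A=\|V\|_{L^t(B_1)}$ through each step via the energy bound $\|\nabla(\eta v^m)\|_2^2\le C(m^2(s-r)^{-2}+m^\kappa A^\kappa)\|v^m\|_2^2$, where $\kappa=2t/(2t-d)$. The factors $(1+A^\kappa)^{1/(2m_j)}$ accumulate to $(1+A^\kappa)^{\sum_j 1/(2m_j)}$, so the iteration exponents must be chosen so that this sum is exactly $d/4$. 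This is why, in $d=2$, the paper needs Ladyzhenskaya's inequality $\|f\|_4^2\le C\|\nabla f\|_2\|f\|_2$ with $m_j=2^j$.

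You instead prove a potential-free estimate under the smallness assumption $\|V\|_{L^t(B_1)}\le 1$. You then recover the full dependence by working at the natural length scale $r_0=\min\{1/4,\Lambda^{-1/2}\}$, where $r_0^{2-d/t}\|V\|_{L^t}\le 1$, so the power $d/4$ comes for free from $r_0^{-d/2}$. Your route is more robust: the Moser constants can be crude (any polynomial growth in $\beta$, any $\chi>1$ in $d=2$), and no bookkeeping of $A$ is needed. The paper's route avoids the rescaling and covering step and shows that the exact exponent already emerges from the iteration itself.

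Testing the complex equation directly with $\eta^2\bar u\,|u|_N^{2(\beta-1)}$ is equivalent in substance to the paper's reduction to a subsolution via Kato's inequality. It spares you from establishing the distributional inequality for $|u|$ as a separate step.

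Two minor points. Your $w$ should be the truncated $\eta|u|\,|u|_N^{\beta-1}$. The limit $N\to\infty$ should be taken at each iteration step rather than only at the end, which is what your integrability remark implicitly does.
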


\begin{proof}
By scaling, it is enough to consider \(R=1\). Set
\(v=|u|\), \(A=\|V\|_{L^t(B_1)}\), and \(\kappa=2t/(2t-d)\), with \(\kappa=1\) when
\(t=\infty\). Since \(t>d/2\), Sobolev's inequality implies that \(Vu\in L^1(B_1)\). Therefore 
Kato's inequality~\cite{Kato78} gives
\(-\Delta v\le |V|v\) in the sense of distributions.

Let \(0<r<s\le1\), and choose \(\eta\in C^\infty_0(B_s)\) such that \(\eta=1\) on
\(B_r\) and \(|\nabla\eta|\le C(s-r)^{-1}\). Testing the subsolution inequality by a
truncated version of \(\eta^2v^{2m-1}\), and then removing the truncation gives
\begin{equation}
\|\nabla(\eta v^m)\|_{L^2(B_s)}^2
\le Cm^2(s-r)^{-2}\|v^m\|_{L^2(B_s)}^2
   +Cm\int_{B_s}|V|\eta^2v^{2m}
\label{eq:power-caccioppoli}
\end{equation}
 for
all \(m\ge1\).
Assume first that \(t<\infty\). For \(f=\eta v^m\), H\"older's inequality and the
Gagliardo--Nirenberg--Sobolev inequality give
\[
\int_{B_s}|V|f^2
\le A\|f\|_{L^{2t/(t-1)}(B_s)}^2
\le CA\|\nabla f\|_{L^2(B_s)}^{d/t}\|f\|_{L^2(B_s)}^{2-d/t}.
\]
Since \(d/(2t)<1\), Young's inequality yields that
\[
CmA\|\nabla f\|_{L^2}^{d/t}\|f\|_{L^2}^{2-d/t}
\le \frac12\|\nabla f\|_{L^2}^2
   +Cm^\kappa A^\kappa\|f\|_{L^2}^2.
\]
For \(t=\infty\), the same conclusion follows by bounding the potential term by
\(A\|f\|_{L^2}^2\). Thus \eqref{eq:power-caccioppoli} implies that 
\begin{equation}
\|\nabla(\eta v^m)\|_{L^2(B_s)}^2
\le C\left(m^2(s-r)^{-2}+m^\kappa A^\kappa\right)
\|v^m\|_{L^2(B_s)}^2.
\label{eq:moser-energy}
\end{equation}

Suppose first that \(d\ge3\), and set \(\chi=d/(d-2)\). The Sobolev inequality and
\eqref{eq:moser-energy} give
\[
\|v\|_{L^{2m\chi}(B_r)}
\le C^{1/(2m)}
\left(m^2(s-r)^{-2}+m^\kappa A^\kappa\right)^{1/(2m)}
\|v\|_{L^{2m}(B_s)}.
\]
We iterate this estimate with \(m_j=\chi^j\) and
\(r_j=\frac12+2^{-j-1}\). Since \(\sum_{j\ge0}(2m_j)^{-1}=d/4\), while the products
containing powers of \(m_j\) and \((r_j-r_{j+1})^{-1}\) converge, we obtain
\[
\|v\|_{L^\infty(B_{1/2})}
\le C(1+A^\kappa)^{d/4}\|v\|_{L^2(B_1)}.
\]

If \(d=2\), the following inequality hold 
$|f\|_{L^4}^2\le C\|\nabla f\|_{L^2}\|f\|_{L^2}$, \cite{L59}. Combined  with
\eqref{eq:moser-energy}, it gives
\[
\|v\|_{L^{4m}(B_r)}
\le C^{1/(2m)}
\left(m^2(s-r)^{-2}+m^\kappa A^\kappa\right)^{1/(4m)}
\|v\|_{L^{2m}(B_s)}.
\]
We now take \(m_j=2^j\) and the same radii \(r_j\). Since
\(\sum_{j\ge0}(4m_j)^{-1}=1/2=d/4\), the iteration gives the same estimate. Scaling
back to \(B_R(x_0)\) proves \eqref{eq:local-L2-Linfty}.
\end{proof}

The next lemma shows, in particular, that a weak solution is H\"older
continuous.

\begin{lemma}
\label{lem:complex-oscillation}
Let \(d\ge2\), \(d/2<t\le\infty\), and let
\(u\in W^{1,2}_{\mathrm{loc}}(B_{2R})\) be a complex-valued weak solution of
\(\Delta u+V(x)u=0\) in \(B_{2R}\), where
\(V\in L^t(B_{2R})\). Define
\(\alpha=\frac12\min\{1,2-d/t\}\) when \(t<\infty\), and \(\alpha=1/2\) when
\(t=\infty\). Then, for every \(0<r\le R\),
\begin{equation}
\operatorname{osc}^{\mathbb C}_{B_r}u
\le C\left(\frac rR\right)^\alpha
\left(1+R^{2-d/t}\|V\|_{L^t(B_{2R})}\right)
\|u\|_{L^\infty(B_{2R})},
\label{eq:scaled-complex-oscillation}
\end{equation}
where \(C=C(d,t)\). For \(t=\infty\), the potential term is understood as
\(R^2\|V\|_{L^\infty(B_{2R})}\).
\end{lemma}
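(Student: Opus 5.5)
By scaling $x\mapsto Rx$ it suffices to take $R=1$. Indeed, $u_R(x)=u(Rx)$ solves $\Delta u_R+R^2V(Rx)u_R=0$, and $\|R^2V(R\cdot)\|_{L^t(B_2)}=R^{2-d/t}\|V\|_{L^t(B_{2R})}$. Set $A=\|V\|_{L^t(B_2)}$ and $M=\|u\|_{L^\infty(B_2)}$. The plan is to compare $u$ with a harmonic function on each ball $B_\rho(x_0)$ with $x_0\in B_1$ and $\rho\le 1$, in the spirit of Campanato estimates. We do not need a Moser oscillation iteration, which is unavailable for complex $u$ anyway.

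\textbf{Step 1: a Morrey bound for the source term.} Fix a ball $B_\rho(x_0)\subset B_2$ and write $\Delta u=f:=-Vu$, so that $|f|\le |V|M$. Let $h$ be harmonic in $B_\rho(x_0)$ with $h=u$ on the boundary, and set $w=u-h$. Then $w$ solves $\Delta w=f$ with zero boundary data, and we estimate it through the Green potential:
\[
\|w\|_{L^\infty(B_\rho(x_0))}\le C\sup_{x}\int_{B_\rho(x_0)}|x-y|^{2-d}|f(y)|\,dy\le C\rho^{2-d/t}AM .
\]
For $d=2$ the kernel is logarithmic, and the same bound follows from H\"older's inequality, since $|x-y|^{2-d}\in L^{t'}(B_\rho)$ exactly when $t>d/2$. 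This step is routine.

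\textbf{Step 2: oscillation decay for the harmonic part.} The harmonic part $h$ satisfies the interior gradient estimate
\[
\operatorname{osc}^{\mathbb C}_{B_{\theta\rho}(x_0)}h\le C\theta\,\operatorname{osc}^{\mathbb C}_{B_\rho(x_0)}h .
\]
Applied to the real and imaginary parts separately, the maximum principle gives $\operatorname{osc}^{\mathbb C}_{B_\rho}h\le 2\operatorname{osc}^{\mathbb C}_{\partial B_\rho}u\le 2\operatorname{osc}^{\mathbb C}_{B_\rho}u$. The boundary values are meaningful because $u$ is continuous (see Step 4). Combining this with Step 1, the function $\omega(\rho)=\operatorname{osc}^{\mathbb C}_{B_\rho(x_0)}u$ satisfies
\[
\omega(\theta\rho)\le C_0\theta\,\omega(\rho)+C\rho^{2-d/t}AM .
\]

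\textbf{Step 3: iteration.} Choose $\theta$ so small that $C_0\theta\le\theta^{\alpha'}$, where $\alpha'=\tfrac12\min\{1,2-d/t\}+\epsilon'$ is still strictly below both $1$ and $2-d/t$. Iterating over $\rho_k=\theta^k$ in the standard Campanato/Giaquinta way gives
\[
\omega(\rho)\le C\rho^{\alpha}\bigl(\omega(1)+AM\bigr)\le C\rho^\alpha(1+A)M .
\]
Taking the exponent $\alpha=\tfrac12\min\{1,2-d/t\}$ leaves room for the constants. To pass from balls centered at $x_0\in B_1$ to $B_r$, note that two points of $B_r$ lie in a ball of radius $2r$ centered at one of them, which stays inside $B_2$ when $r\le1/2$. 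For $1/2<r\le1$ the estimate is trivial, since the oscillation is at most $2M$.

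\textbf{Step 4: the main obstacle.} The hard part is justifying this argument for a merely $W^{1,2}$ solution. One must know that $u$ is bounded, which follows from Lemma~\ref{lem:L2-Linfty-local}, and that $u$ has continuous boundary traces so that $h$ is well defined and the maximum principle applies. To handle this, I would mollify $V$, or alternatively use the representation $u=h+w$ directly in the $W^{1,2}$ sense: here $w=G*(f\mathbf 1_{B_\rho})$ is continuous, because $f\in L^t$ with $t>d/2$, and therefore $h=u-w$ is harmonic in the weak sense. Phrasing Steps 1--2 with this decomposition avoids boundary values altogether. In place of the maximum principle one then bounds $\operatorname{osc}h\le\operatorname{osc}u+2\|w\|_\infty$, which only changes the constants.
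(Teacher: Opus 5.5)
Your argument is correct, but it takes a different route from the paper's. After scaling to $R=1$ and using Lemma~\ref{lem:L2-Linfty-local} for boundedness, the paper applies the interior Calder\'on--Zygmund estimate
\[
\|u\|_{W^{2,t}(B_1)}\le C\bigl(\|Vu\|_{L^t(B_{3/2})}+\|u\|_{L^t(B_{3/2})}\bigr)\le C\bigl(1+\|V\|_{L^t(B_2)}\bigr)\|u\|_{L^\infty(B_2)} .
\]
It then uses the Sobolev--Morrey embedding $W^{2,t}(B_1)\hookrightarrow C^{0,\alpha}(B_1)$, which is valid because $\alpha<\min\{1,2-d/t\}$; when $t=\infty$ it uses $W^{2,2d}$ instead.

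You replace the $W^{2,p}$ theory by a Campanato harmonic-replacement scheme. Its only inputs are three things:
\begin{enumerate}
\item the size of the fundamental solution, where H\"older's inequality against it is exactly where $t>d/2$ enters and where the factor $\rho^{2-d/t}$ comes from;
\item interior gradient bounds for harmonic functions;
\item the standard iteration lemma.
\end{enumerate}

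What your route buys:
\begin{itemize}
\item It is more elementary and never touches singular integrals.
\item Your Step~4 decomposition $u=h+w$, with Weyl's lemma giving smoothness of $h$, supplies the qualitative regularity that the paper's argument tacitly presupposes. The interior estimate it quotes (Gilbarg--Trudinger, Theorem~9.11) is an a priori bound stated for $u\in W^{2,t}_{\mathrm{loc}}$, and that membership is itself proved by the same Newtonian-potential decomposition.
\end{itemize}

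What the paper's route buys:
\begin{itemize}
\item It is shorter.
\item It yields a $W^{2,t}$ bound, hence $C^{1,1-d/t}$ regularity when $t>d$, although only the H\"older bound is used later.
\end{itemize}

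One small correction in $d=2$. The phrase ``$|x-y|^{2-d}\in L^{t'}$'' is vacuous there. What you actually use is
\[
\|\log(2\rho/|x-\cdot|)\|_{L^{t'}(B_\rho(x_0))}\le C\rho^{2/t'}=C\rho^{2-2/t}.
\]
If you work with the Newtonian potential as in Step~4, estimate $\operatorname{osc}^{\mathbb C}w$ rather than $\|w\|_{L^\infty}$, or renormalize the kernel to $\log(|x-y|/2\rho)$. The logarithmic kernel is not scale invariant, so $\|w\|_{L^\infty}$ would otherwise pick up a factor $|\log\rho|$. With this change nothing else in the iteration is affected.
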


\begin{proof}
By scaling, it is enough to take \(R=1\). By
Lemma~\ref{lem:L2-Linfty-local}, the solution is locally bounded. Assume first that
\(t<\infty\). Since \(\Delta u=-Vu\), the interior \(W^{2,t}\)-estimate for the
Laplacian~\cite[Theorem~9.11]{GT77} gives
\[
\|u\|_{W^{2,t}(B_1)}
\le C\left(\|Vu\|_{L^t(B_{3/2})}
          +\|u\|_{L^t(B_{3/2})}\right)
\le C\left(1+\|V\|_{L^t(B_2)}\right)
\|u\|_{L^\infty(B_2)}.
\]
Our choice of \(\alpha\) satisfies
\(0<\alpha<\min\{1,2-d/t\}\). The Sobolev--Morrey embedding therefore implies
\[
|u(y)-u(z)|
\le C|y-z|^\alpha
\left(1+\|V\|_{L^t(B_2)}\right)
\|u\|_{L^\infty(B_2)},\qquad y,z\in B_1.
\]
Taking the supremum over \(y,z\in B_r\) proves the estimate for \(t<\infty\).

If \(t=\infty\), apply the interior \(W^{2,p}\)-estimate with \(p=2d\). Since
\(W^{2,2d}(B_1)\) embeds into \(C^{1/2}(B_1)\), the same argument gives
\[
\operatorname{osc}^{\mathbb C}_{B_1}u
\le Cr^{1/2}
\left(1+\|V\|_{L^\infty(B_2)}\right)
\|u\|_{L^\infty(B_2)}.
\]
Scaling back proves \eqref{eq:scaled-complex-oscillation}.
\end{proof}

\if false

In this appendix we record a local oscillation estimate which is used in Section~5.
The point is that the argument must apply to complex-valued solutions and complex-valued
potentials. Therefore we do not use the maximum principle, Harnack's inequality, or the
real-valued oscillation \(\sup u-\inf u\).

For a complex-valued function \(u\) and a measurable set \(E\), we define
\[
    \operatorname{osc}^{\mathbb C}_{E}u
    :=
    \sup_{x,y\in E}|u(x)-u(y)|.
\]

We shall use the following regularity lemma.

\begin{lemma}
\label{lem:complex-local-regularity}
Let \(d\ge3\), let \(t>d/2\), and let \(V\in L^t_{\mathrm{loc}}(\Omega;\mathbb C)\).
Assume that \(u\in W^{1,2}_{\mathrm{loc}}(\Omega;\mathbb C)\) is a weak solution of
\[
    \Delta u+V(x)u=0
    \qquad\text{in }\Omega .
\]
Then \(u\in L^\infty_{\mathrm{loc}}(\Omega)\). Moreover, if \(t<\infty\), then
\[
    u\in W^{2,t}_{\mathrm{loc}}(\Omega),
\]
while if \(t=\infty\), then
\[
    u\in W^{2,p}_{\mathrm{loc}}(\Omega)
    \qquad\text{for every }1<p<\infty.
\]
Consequently \(u\) is locally Hölder continuous.
\end{lemma}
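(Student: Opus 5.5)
The plan is to bootstrap regularity in three stages: local boundedness, then second-order Sobolev regularity from the Calderón--Zygmund theory for the Laplacian, and finally Hölder continuity by Sobolev--Morrey embedding. All statements are local, so I would fix a ball $B_{2R}(x_0)\Subset\Omega$ and work there.

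\emph{Step 1: local boundedness.} First I would check that $Vu\in L^1_{\mathrm{loc}}$, so that the equation and Kato's inequality make sense distributionally. Since $u\in W^{1,2}_{\mathrm{loc}}\subset L^{2d/(d-2)}_{\mathrm{loc}}$ and $V\in L^t_{\mathrm{loc}}$ with $1/t<2/d$, we have
$$\frac1t+\frac{d-2}{2d}<\frac{d+2}{2d}\le1,$$
so Hölder's inequality gives the claim. Kato's inequality \cite{Kato78} then yields $-\Delta|u|\le |V||u|$ in $\mathcal D'$. At this point I can invoke Lemma~\ref{lem:L2-Linfty-local} directly, which is the Moser iteration using the Gagliardo--Nirenberg absorption of the potential term. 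This gives $u\in L^\infty(B_R(x_0))$ with a bound by $\|u\|_{L^2(B_{2R})}$. The only delicate point is the truncation in the test function $\eta^2 v^{2m-1}$, which is handled as indicated in that proof: truncate $v$ at level $k$, then let $k\to\infty$ by monotone convergence.

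\emph{Step 2: $W^{2,t}_{\mathrm{loc}}$, resp.\ $W^{2,p}_{\mathrm{loc}}$.} Since $u$ is locally bounded, $f:=-Vu\in L^t(B_R)$ when $t<\infty$, and $f\in L^\infty(B_R)\subset L^p(B_R)$ for every $p<\infty$ when $t=\infty$. Take a cutoff $\eta$ equal to $1$ on $B_{R/2}$ and set $w=\Gamma*(\eta f)$, the Newtonian potential. By Calderón--Zygmund theory \cite[Theorem~9.9]{GT77}, $w\in W^{2,t}_{\mathrm{loc}}$ (resp.\ $W^{2,p}_{\mathrm{loc}}$) and $\Delta w=\eta f$. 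Then $u-w$ is a distributional harmonic function in $B_{R/2}$, hence smooth by Weyl's lemma, and so $u\in W^{2,t}(B_{R/4})$ (resp.\ $W^{2,p}$). A covering argument gives the global local statement. Equivalently, one may quote the interior estimate \cite[Theorem~9.11]{GT77} once a priori $W^{2,t}$-regularity is known; the Newtonian-potential argument avoids that circularity.

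\emph{Step 3: Hölder continuity.} If $t<\infty$, then $2-d/t>0$, and the Morrey--Sobolev embedding gives $W^{2,t}_{\mathrm{loc}}\subset C^{0,\alpha}_{\mathrm{loc}}$ for every $\alpha<\min\{1,2-d/t\}$. When $t<d$, $W^{2,t}\subset W^{1,dt/(d-t)}$ with $dt/(d-t)>d$, so Morrey applies. If $t\ge d$, one uses $W^{1,s}$ for all large $s$. For $t=\infty$, taking $p=2d$ gives $C^{0,1/2}_{\mathrm{loc}}$.

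The main obstacle is Step 1. The potential is only in $L^t$ and complex-valued, so one has to justify Kato's inequality and the use of the powers $v^{2m-1}$ as test functions for a function merely in $W^{1,2}$. Once the $L^\infty$ bound from Lemma~\ref{lem:L2-Linfty-local} is available, Steps 2 and 3 are standard linear theory for the Laplacian.
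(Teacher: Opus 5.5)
Your argument is correct. Steps~2 and~3 are essentially the paper's, but Step~1 takes a different route.

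\textbf{The paper's route.} It does not run a Moser iteration here. It quotes the Brezis--Kato bootstrap to get $u\in L^q_{\mathrm{loc}}$ for every $q<\infty$. It then takes $q$ so large that $1/s=1/t+1/q$ gives $s>d/2$, so $Vu\in L^s_{\mathrm{loc}}$. Calder\'on--Zygmund gives $u\in W^{2,s}_{\mathrm{loc}}$, and Sobolev--Morrey gives $L^\infty_{\mathrm{loc}}$. A second Calder\'on--Zygmund step with $Vu\in L^t_{\mathrm{loc}}$ (resp.\ $L^p_{\mathrm{loc}}$) then yields the $W^{2,t}$ (resp.\ $W^{2,p}$) statement.

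\textbf{Your route.} You reduce to the real subsolution $|u|$ via Kato's inequality and run the Moser iteration of Lemma~\ref{lem:L2-Linfty-local}. This gives a quantitative bound,
$\|u\|_{L^\infty(B_{R/2})}\le CR^{-d/2}\bigl(1+(R^{2-d/t}\|V\|_{L^t(B_R)})^{2t/(2t-d)}\bigr)^{d/4}\|u\|_{L^2(B_R)}$.
That is exactly the form used later in the paper, and you reach $L^\infty$ in one step.

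\textbf{What your route costs.} The iteration must use truncated test functions, as you say. Otherwise the powers $v^{2m-1}$ are admissible only if boundedness is already known, and invoking that lemma here would be circular. Your exponent check $1/t+(d-2)/(2d)<(d+2)/(2d)$ shows $Vu\in L^{2d/(d+2)}_{\mathrm{loc}}$, not just $L^1_{\mathrm{loc}}$. That is what lets you test the subsolution inequality against unbounded $W^{1,2}_0$ functions such as $\eta^2\min(v,k)^{2m-2}v$ and then let $k\to\infty$.

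\textbf{What the paper's route buys.} It is purely qualitative, since Brezis--Kato bounds depend on more than $\|V\|_{L^{d/2}}$. In exchange, it uses only black-box linear theory after Brezis--Kato. That first step already works under the critical hypothesis $V\in L^{d/2}_{\mathrm{loc}}$; the strict inequality $t>d/2$ is needed only to pass to $L^\infty$.

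\textbf{Steps 2 and 3.} Your Newtonian-potential plus Weyl-lemma argument is more careful than the paper's direct appeal to the interior estimate \cite[Theorem~9.11]{GT77}. That theorem is an a priori estimate that presupposes $W^{2,p}_{\mathrm{loc}}$ regularity, and the same issue affects the paper's intermediate $W^{2,s}$ step. Step~3 only makes explicit the Morrey embedding that the paper leaves implicit.
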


\begin{proof}
The local boundedness follows from the Brezis--Kato bootstrap for the Schrödinger
operators with singular complex potentials; see Brezis--Kato~\cite{BK79}.

For completeness, we indicate why it applies here. Since
\[
    -\Delta u=V u,
\]
we are in the Brezis--Kato setting since \(V\in L^t\) with \(t>d/2\). The Brezis--Kato result implies
\[
    u\in L^q_{\mathrm{loc}}(\Omega)
    \qquad\text{for every }1\le q<\infty .
\]
Choose \(q\) sufficiently large so that
\[
    \frac1s=\frac1t+\frac1q
    \qquad\text{satisfies}\qquad
    s>\frac d2 .
\]
Then \(Vu\in L^s_{\mathrm{loc}}\). By the interior Calderón--Zygmund estimate for the
Poisson equation, see Gilbarg--Trudinger~\cite[Corollary~9.10, Theorem~9.11]{GT77},
we get
\[
    u\in W^{2,s}_{\mathrm{loc}}(\Omega).
\]
Since \(s>d/2\), Sobolev--Morrey embedding gives \(u\in L^\infty_{\mathrm{loc}}(\Omega)\).

Now \(Vu\in L^t_{\mathrm{loc}}\) if \(t<\infty\), and another application of the interior
\(W^{2,t}\)-estimate gives \(u\in W^{2,t}_{\mathrm{loc}}(\Omega)\). If \(t=\infty\), then
\(Vu\in L^p_{\mathrm{loc}}\) for every finite \(p\), and hence
\(u\in W^{2,p}_{\mathrm{loc}}(\Omega)\) for every \(1<p<\infty\).

\end{proof}

We now give the supremum norm estimate for weak solutions.
\begin{lemma}
\label{lem:L2-Linfty-local}
Let \(d\ge2\), \(t>d/2\), and let
\(
    u\in W^{1,2}_{\mathrm{loc}}(B_R(x_0))
\)
be a weak solution of \eqref{ePDE}
in \(B_R(x_0)\), where \(V\in L^t(B_R(x_0))\) may be complex-valued. Then
\[
    \|u\|_{L^\infty(B_{R/2}(x_0))}
    \le
    C R^{-d/2}
    \left(
        1+
        \left(
            R^{2-d/t}\|V\|_{L^t(B_R(x_0))}
        \right)^{\frac{2t}{2t-d}}
    \right)^{d/4}
    \|u\|_{L^2(B_R(x_0))}.
\]
The constant \(C\) depends only on \(d\) and \(t\). For \(t=\infty\), the estimate is
interpreted as
\[
    \|u\|_{L^\infty(B_{R/2}(x_0))}
    \le
    C R^{-d/2}
    \left(
        1+R^2\|V\|_{L^\infty(B_R(x_0))}
    \right)^{d/4}
    \|u\|_{L^2(B_R(x_0))}.
\]
\end{lemma}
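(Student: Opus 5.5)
By scaling it suffices to take \(R=1\) and \(x_0=0\). The plan is a Moser iteration for \(v=|u|\), using Kato's inequality to handle the complex-valued setting. The structure is the same as in the proof of Lemma~\ref{lem:L2-Linfty-local} above, and the iteration also gives the \(t=\infty\) case with \(\kappa=1\).

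The first step is to justify that \(Vu\in L^1_{\mathrm{loc}}\). Since \(u\in W^{1,2}\subset L^{2d/(d-2)}\) and \(t>d/2\), we have \(|V||u|^2\in L^1\). Kato's inequality \cite{Kato78} then gives \(\Delta v\ge \operatorname{Re}(\overline{\operatorname{sgn} u}\,\Delta u)\ge -|V|v\) in the distributional sense. Thus \(v\ge0\) is a subsolution of \(-\Delta v\le |V|v\).

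Next, test with \(\eta^2 \min(v,k)^{2m-2}v\) and let \(k\to\infty\). This yields the energy bound \(\|\nabla(\eta v^m)\|_2^2\le Cm^2(s-r)^{-2}\|v^m\|_2^2+Cm\int|V|\eta^2v^{2m}\). For the potential term, Hölder gives \(\int|V|f^2\le A\|f\|_{2t/(t-1)}^2\), and Gagliardo--Nirenberg interpolates \(L^{2t/(t-1)}\) between \(L^2\) and \(\dot W^{1,2}\) with weight \(d/(2t)<1\) on the gradient. Young's inequality then absorbs the gradient and leaves \(Cm^\kappa A^\kappa\|f\|_2^2\), where \(\kappa=2t/(2t-d)\).

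Sobolev's inequality (or Ladyzhenskaya's inequality when \(d=2\)) now gives the reverse Hölder step \(\|v\|_{L^{2m\chi}(B_r)}\le (C(m^2(s-r)^{-2}+m^\kappa A^\kappa))^{1/(2m)}\|v\|_{L^{2m}(B_s)}\).

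Iterate with \(m_j=\chi^j\) and \(r_j=\tfrac12+2^{-j-1}\). Split the product of factors: the powers of \(m_j\) and \(2^j\) appear with exponents \(1/(2m_j)\), so they converge to a constant. The \(A\)-factors contribute \((1+A^\kappa)^{\sum 1/(2m_j)}=(1+A^\kappa)^{d/4}\).

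The main obstacle is the justification step rather than the iteration. One must make Kato's inequality and the truncated test functions legitimate for merely \(W^{1,2}\) complex solutions with \(V\in L^t\), and check that the truncated quantities remain admissible. The cleanest route is to test with \(\min(v,k)\) powers, which are bounded and so make every integral finite, and then pass to the limit \(k\to\infty\) by monotone convergence.
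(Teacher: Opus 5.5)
Your argument matches the paper's proof. Kato's inequality makes $v=|u|$ a subsolution of $-\Delta v\le |V|v$, and a truncated version of $\eta^2v^{2m-1}$ gives the energy bound. H\"older plus Gagliardo--Nirenberg plus Young produce the factor $m^\kappa A^\kappa$ with $\kappa=2t/(2t-d)$. Moser iteration with $r_j=\frac12+2^{-j-1}$ and $m_j=\chi^j$ then gives $\sum_j(2\chi^j)^{-1}=d/4$ for $d\ge3$. Your truncation $\eta^2\min(v,k)^{2m-2}v$ is admissible precisely because $|V||u|^2\in L^1_{\mathrm{loc}}$. The fact $Vu\in L^1_{\mathrm{loc}}$ needed for Kato follows from $|V||u|\le|V|(1+|u|^2)$.

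The step that fails as written is $d=2$. There $\chi=d/(d-2)$ is undefined. Ladyzhenskaya's inequality $\|f\|_{L^4}^2\le C\|\nabla f\|_{L^2}\|f\|_{L^2}$ controls only the square root of the energy, so with $f=\eta v^m$ it yields
\[
\|v\|_{L^{4m}(B_r)}\le C^{1/(2m)}\bigl(m^2(s-r)^{-2}+m^\kappa A^\kappa\bigr)^{1/(4m)}\|v\|_{L^{2m}(B_s)},
\]
with exponent $1/(4m)$ rather than $1/(2m)$. Iterating with $m_j=2^j$ gives $(1+A^\kappa)^{\sum_j(4m_j)^{-1}}=(1+A^\kappa)^{1/2}=(1+A^\kappa)^{d/4}$, which is what the paper does.

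Your unified formula, with exponent $1/(2m)$ and $\chi=2$, is true but lossy. It makes the $A$-factors accumulate to $(1+A^\kappa)^{1}$, i.e.\ the power $d/2$ instead of $d/4$. Replacing Ladyzhenskaya by a plain embedding $W^{1,2}_0\subset L^{2\chi}$ with fixed finite $\chi$ does not help either: it gives the power $\chi/(2(\chi-1))>\frac12$. To get the stated exponent in the plane, you have to carry the interpolation exponent from the Gagliardo--Nirenberg/Ladyzhenskaya step through the iteration.
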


\begin{proof}
By scaling and translation it is enough to prove the estimate for \(B_R(x_0)=B_1\).
Let
\(
    A=\|V\|_{L^t(B_1)}.
\)
Since \(u\) and \(V\) may be complex-valued, we apply Kato's inequality, see \cite{Kato78} to \(|u|\). In
the sense of distributions,
\[
    -\Delta |u|
    \le
    |V|\,|u|.
\]
Thus \(|u|\) is a nonnegative subsolution of the Schrödinger inequality with potential
\(|V|\).

\textcolor{red}{Can one use the Nash-Moser iteration directly instead of reducing to nonnegative subsolution using Kato's inequality? we just multiply $\eta^2 |u|^{2m-2}\bar u$ in the later caccioppoli  inequality.}

If
\(
    0<r<R\le1
\)
and \(\eta\in C^\infty_0(B_R)\) satisfies \(\eta=1\) on \(B_r\) and
\(|\nabla\eta|\le C(R-r)^{-1}\), then for every \(m\ge1\),
\[
    \|\nabla(\eta |u|^m)\|_{L^2(B_R)}^2
    \le
    C m^2
    \left(
        (R-r)^{-2}
        +
        A^{\frac{2t}{2t-d}}
    \right)
    \||u|^m\|_{L^2(B_R)}^2.
\]

\textcolor{red}{ Does it seem to be $m^{\frac{2t}{2t-d}}$ instead of $m^2$ in the last inequality? If $t>d$, then ${\frac{2t}{2t-d}}<2$. If $\frac d 2 <t<d,$ then $ {\frac{2t}{2t-d}}>2$. However, it does not play an important role in the later convergence when using Moser's estimates. Let me show why it is $m^{\frac{2t}{2t-d}}$ below.}

Let $w=|u|, W=|V|, A=\|V\|_{L^t(B_1}$
Use the nonnegative test function
$\varphi=\eta^2 w^{2m-1},$

Then
\[
\int_\Omega \nabla w\cdot\nabla\!\left(\eta^2w^{2m-1}\right)
\le
\int_\Omega W\eta^2w^{2m}.
\]

Expanding the left-hand side,
\[
(2m-1)\int_\Omega
\eta^2w^{2m-2}|\nabla w|^2
+
2\int_\Omega
\eta w^{2m-1}\nabla w\cdot\nabla\eta
\le
\int_\Omega
W\eta^2w^{2m}.
\]

Estimating the middle term by Young's inequality,
\[
\left|
2\int_\Omega
\eta w^{2m-1}\nabla w\cdot\nabla\eta
\right|
\le
\frac{2m-1}{2}
\int_\Omega
\eta^2w^{2m-2}|\nabla w|^2
+
\frac{2}{2m-1}
\int_\Omega
w^{2m}|\nabla\eta|^2.
\]

Hence,
\[
(2m-1)\int_\Omega
\eta^2w^{2m-2}|\nabla w|^2
\le
\frac{C}{2m-1}
\int_\Omega
w^{2m}|\nabla\eta|^2
+
C
\int_\Omega
W\eta^2w^{2m}.
\]

Since
\[
|\nabla(w^m)|^2
=
m^2w^{2m-2}|\nabla w|^2,
\]
it follows that
\[
\int_\Omega
\eta^2|\nabla(w^m)|^2
\le
C 
\int_\Omega
w^{2m}|\nabla\eta|^2
+
C m
\int_\Omega
W\eta^2w^{2m}.
\]

Using
\[
|\nabla(\eta w^m)|^2
\le
2\eta^2|\nabla(w^m)|^2
+
2w^{2m}|\nabla\eta|^2,
\]
we obtain
\[
\|\nabla(\eta w^m)\|_{L^2(\Omega)}^2
\le
C 
\int_\Omega
w^{2m}|\nabla\eta|^2
+
C m
\int_\Omega
W\eta^2w^{2m}.
\]

If \(\eta\) satisfies
\[
|\nabla\eta|
\le
\frac{C}{R-r},
\]
then
\[
\|\nabla(\eta w^m)\|_{L^2(\Omega)}^2
\le
C (R-r)^{-2}
\|w^m\|_{L^2(B_R)}^2
+
C m
\int_{B_R}
W\eta^2w^{2m}.
\tag{11}
\]
Let $f=\eta w^m$.
\textcolor{red}{
\[
\int_{B_R} W f^2
    \le \|W\|_{L^t(B_R)}
       \|f^2\|_{L^{\frac{t}{t-1}}(B_R)}
    = A\|f\|_{L^{\frac{2t}{t-1}}(B_R)}^2,
\] }
where
$A=\|V\|_{L^t(B_R)}.$
Let
$2^*=\frac{2d}{d-2}.$
Since \(t>\frac d2\),
$2<\frac{2t}{t-1}<2^*.$

Interpolate between \(L^2\) and \(L^{2^*}\):
\[
\|f\|_{L^{\frac{2t}{t-1}}}
\le
\|f\|_{L^2}^{\,1-\theta}
\|f\|_{L^{2^*}}^{\,\theta},
\]
where \(\theta\) is determined by
\[
\frac{t-1}{2t}
=
\frac{1-\theta}{2}
+\frac{\theta}{2^*}.
\]

Since
\[
\frac1{2^*}
=
\frac12-\frac1d,
\]
we obtain
\[
\frac1{2t}
=
\frac{\theta}{d},
\]
hence
\[
\theta=\frac{d}{2t}.
\]

Therefore,
\[
\int_{B_R}Wf^2
\le
A
\|f\|_{L^2}^{\,2(1-\theta)}
\|f\|_{L^{2^*}}^{\,2\theta}.
\]

By Sobolev's inequality,
\[
\|f\|_{L^{2^*}}^2
\le
C\|\nabla f\|_{L^2}^2.
\]
Thus
\[
\int_{B_R}Wf^2
\le
CA
\|f\|_{L^2}^{\,2(1-\theta)}
\|\nabla f\|_{L^2}^{\,2\theta}.
\]

Young's inequality gives, for every \(\varepsilon>0\),
\[
A
\|f\|_{L^2}^{\,2(1-\theta)}
\|\nabla f\|_{L^2}^{\,2\theta}
\le
\varepsilon
\|\nabla f\|_{L^2}^2
+
C
\varepsilon^{-\frac{\theta}{1-\theta}}
A^{\frac1{1-\theta}}
\|f\|_{L^2}^2.
\]

Since
\[
\frac1{1-\theta}
=
\frac1{1-\frac{d}{2t}}
=
\frac{2t}{2t-d},
\]
and
\[
\frac{\theta}{1-\theta}
=
\frac{\frac{d}{2t}}
{1-\frac{d}{2t}}
=
\frac{d}{2t-d},
\]
we conclude that
\[
\int_{B_R}Wf^2
\le
\varepsilon
\|\nabla f\|_{L^2}^2
+
C
\varepsilon^{-\frac{d}{2t-d}}
A^{\frac{2t}{2t-d}}
\|f\|_{L^2}^2.
\tag{33}
\]

Substituting \((33)\) into \((11)\), and choosing \(\varepsilon>0\) small  (say so $\varepsilon Cm\leq \frac{1}{2}$ )that the gradient term can be absorbed into the left-hand side, yields
\[
\|\nabla(\eta w^m)\|_{L^2(B_R)}^2
\le
C m^\gamma
\left(
(R-r)^{-2}
+
A^{\frac{2t}{2t-d}}
\right)
\|w^m\|_{L^2(B_R)}^2,
\]
where $\gamma=\frac{2t}{2t-d}$

\textcolor{red}{The proof of the comment ends here.}

Indeed, the potential term is estimated by
\[
    \int_{B_R}|V|\eta^2 |u|^{2m}
    \le
    \|V\|_{L^t(B_R)}
    \|\eta |u|^m\|_{L^{2t/(t-1)}(B_R)}^2,
\]
and \(L^{2t/(t-1)}\) is interpolated between \(L^2\) and the Sobolev exponent. The
Sobolev term is then absorbed by Young's inequality, producing the power
\(2t/(2t-d)\).

Applying Sobolev's inequality to \(\eta |u|^m\) and iterating over a decreasing sequence
of balls gives the Moser estimate
\[
    \|u\|_{L^\infty(B_{1/2})}
    \le
    C
    \left(
        1+A^{\frac{2t}{2t-d}}
    \right)^{d/4}
    \|u\|_{L^2(B_1)}.
\]
Scaling back to \(B_R(x_0)\) gives
\[
    \|u\|_{L^\infty(B_{R/2}(x_0))}
    \le
    C R^{-d/2}
    \left(
        1+
        \left(
            R^{2-d/t}\|V\|_{L^t(B_R(x_0))}
        \right)^{\frac{2t}{2t-d}}
    \right)^{d/4}
    \|u\|_{L^2(B_R(x_0))}.
\]
 This proves the lemma.
\end{proof}

We now prove the normalized oscillation estimate needed in Section~\ref{s:Remez}.

\begin{lemma}
\label{lem:complex-oscillation}
Let \(d\ge3\), \(d/2<t\le\infty\), and let
\(u\in W^{1,2}_{\mathrm{loc}}(B_2;\mathbb C)\) be a weak solution of
\[
    \Delta u+V(x)u=0
    \qquad\text{in }B_2,
\]
where \(V\in L^t(B_2;\mathbb C)\). Define
\(
    \alpha=\frac12\min\{1,2-d/t\}\) when \(t<\infty\) and \(\alpha=\frac{1}{2}\) when $t=\infty$.
Then for every \(0<r\le 1\),
\[
    \operatorname{osc}^{\mathbb C}_{B_r}u
    \le
    C r^\alpha
    \left(1+\|V\|_{L^t(B_2)}\right)
    \|u\|_{L^\infty(B_2)},
\]
where \(C=C(d,t)\). 
\end{lemma}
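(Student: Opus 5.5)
By scaling it suffices to take \(R=1\) and prove
\[
\operatorname{osc}^{\mathbb C}_{B_r}u\le Cr^\alpha\bigl(1+\|V\|_{L^t(B_2)}\bigr)\|u\|_{L^\infty(B_2)},\qquad 0<r\le1 .
\]
Under \(x\mapsto Rx\) the quantity \(\|V\|_{L^t(B_2)}\) becomes \(R^{2-d/t}\|V\|_{L^t(B_{2R})}\), and the oscillation on \(B_r\) becomes the oscillation on \(B_{r/R}\), which gives \eqref{eq:scaled-complex-oscillation}.

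\textbf{Step 1: local boundedness.} By Lemma~\ref{lem:L2-Linfty-local}, \(u\in L^\infty_{\mathrm{loc}}(B_2)\). We may assume \(\|u\|_{L^\infty(B_2)}<\infty\); otherwise there is nothing to prove.

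\textbf{Step 2: \(W^{2,t}\) bound for \(t<\infty\).} Write the equation as \(\Delta u=f\) with \(f=-Vu\). Then
\[
\|f\|_{L^t(B_{3/2})}\le \|V\|_{L^t(B_2)}\|u\|_{L^\infty(B_2)}.
\]
Since the equation is linear, real and imaginary parts can be treated separately. The interior Calder\'on--Zygmund estimate \cite[Theorem~9.11]{GT77} applies to \(W^{2,t}\) functions, so \(u\) must first be known to lie in \(W^{2,t}_{\mathrm{loc}}\). For this I would mollify, or argue directly: \(u\in W^{1,2}\) is a weak solution of \(\Delta u=f\) with \(f\in L^t\), and Weyl's lemma together with the Newtonian potential gives \(u\in W^{2,t}_{\mathrm{loc}}\). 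The estimate then reads
\[
\|u\|_{W^{2,t}(B_1)}\le C\bigl(\|f\|_{L^t(B_{3/2})}+\|u\|_{L^t(B_{3/2})}\bigr)\le C\bigl(1+\|V\|_{L^t(B_2)}\bigr)\|u\|_{L^\infty(B_2)}.
\]

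\textbf{Step 3: Morrey embedding, case \(t<\infty\).}
\begin{itemize}
\item If \(t>d/2\) and \(t<d\), then \(W^{2,t}(B_1)\hookrightarrow C^{0,2-d/t}(\overline{B_1})\), by first embedding into \(W^{1,dt/(d-t)}\) and then applying Morrey.
\item If \(t>d\), then \(W^{2,t}\hookrightarrow C^{1}\subset C^{0,1}\).
\item If \(t=d\), then \(W^{2,d}\hookrightarrow W^{1,s}\) for all finite \(s\), hence into \(C^{0,\beta}\) for every \(\beta<1\).
\end{itemize}
In every case \(\alpha=\frac12\min\{1,2-d/t\}\) is strictly below the available exponent. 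On the bounded domain \(B_1\), a higher H\"older norm controls the \(\alpha\)-H\"older seminorm, so
\[
|u(y)-u(z)|\le C|y-z|^\alpha\bigl(1+\|V\|_{L^t(B_2)}\bigr)\|u\|_{L^\infty(B_2)},\qquad y,z\in B_1 .
\]
For \(y,z\in B_r\) we have \(|y-z|\le2r\), which gives the claim. Taking half the exponent is exactly what avoids the borderline case \(t=d\).

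\textbf{Step 4: case \(t=\infty\).} Here \(Vu\in L^{2d}(B_{3/2})\), and the same estimate with \(p=2d\) gives
\[
\|u\|_{W^{2,2d}(B_1)}\le C\bigl(1+\|V\|_{L^\infty(B_2)}\bigr)\|u\|_{L^\infty(B_2)}.
\]
Since \(W^{2,2d}\hookrightarrow C^1\), in particular into \(C^{0,1/2}\), the same argument applies.

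The only delicate point is the regularity justification in Step 2, since \(u\) is a priori only a weak \(W^{1,2}\) solution; everything else is bookkeeping of the embedding exponents.
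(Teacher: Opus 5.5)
Your proposal is correct and follows the paper's proof essentially step for step. Both arguments use local boundedness, the interior Calder\'on--Zygmund $W^{2,t}$ estimate with $\|Vu\|_{L^t}\le\|V\|_{L^t}\|u\|_{L^\infty}$, the Sobolev--Morrey embedding with $\alpha$ strictly below $\min\{1,2-d/t\}$, and the exponent $p=2d$ when $t=\infty$. The only difference is that you justify $u\in W^{2,t}_{\mathrm{loc}}$ via the Newtonian potential and Weyl's lemma, whereas the paper gets it from a Brezis--Kato bootstrap followed by Calder\'on--Zygmund.
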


\begin{proof}
Assume first that \(t<\infty\). By Lemma~\ref{lem:complex-local-regularity},
\(u\in L^\infty_{\mathrm{loc}}(B_2)\) and \(u\in W^{2,t}_{\mathrm{loc}}(B_2)\).
Since \(\Delta u=-Vu\), the interior \(W^{2,t}\)-estimate for the Laplacian gives
\[
    \|u\|_{W^{2,t}(B_1)}
    \le
    C
    \left(
        \|\Delta u\|_{L^t(B_2)}
        +
        \|u\|_{L^t(B_2)}
    \right).
\]
Hence
\[
    \|u\|_{W^{2,t}(B_1)}
    \le
    C
    \left(
        \|Vu\|_{L^t(B_2)}
        +
        \|u\|_{L^t(B_2)}
    \right)
    \le
    C
    \left(1+\|V\|_{L^t(B_2)}\right)
    \|u\|_{L^\infty(B_2)}.
\]
By Morrey's embedding, \(W^{2,t}(B_1)\hookrightarrow C^\alpha(B_1)\) for the above
choice of \(\alpha\). Therefore, for all \(y,z\in B_1\),
\[
    |u(y)-u(z)|
    \le
    C |y-z|^\alpha
    \left(1+\|V\|_{L^t(B_2)}\right)
    \|u\|_{L^\infty(B_2)}.
\]
Taking the supremum over \(y,z\in B_r\), with \(0<r\le1\), gives
\[
    \operatorname{osc}^{\mathbb C}_{B_r}u
    \le
    C r^\alpha
    \left(1+\|V\|_{L^t(B_2)}\right)
    \|u\|_{L^\infty(B_2)}.
\]

If \(t=\infty\), choose a fixed finite exponent \(p>d\), for example \(p=2d\). Then
\[
    \|V\|_{L^p(B_2)}
    \le
    C\|V\|_{L^\infty(B_2)}.
\]
Applying the previous argument with \(p=2d\) in place of \(t\) gives
\[
    \operatorname{osc}^{\mathbb C}_{B_r}u
    \le
    C r^{1/2}
    \left(1+\|V\|_{L^\infty(B_2)}\right)
    \|u\|_{L^\infty(B_2)}.
\]
This completes the proof.
\end{proof}

\begin{remark}
The same estimate holds after scaling. If $\Delta u+Vu=0$ in \(B_{2R}(x_0)\), then for
\(0<r\le R\),
\[
    \operatorname{osc}^{\mathbb C}_{B_r(x_0)}u
    \le
    C
    \left(\frac rR\right)^\alpha
    \left(
        1+R^{2-d/t}\|V\|_{L^t(B_{2R}(x_0))}
    \right)
    \|u\|_{L^\infty(B_{2R}(x_0))}.
\]
For \(t=\infty\), the factor \(R^{2-d/t}\|V\|_{L^t}\) is understood as
\(R^2\|V\|_{L^\infty}\).
\end{remark}
\fi

\bibliography{Mybib}
\bibliographystyle{abbrv}
\end{document}